\renewcommand{\subsection}{\@startsection{subsection}{1}{0pt}{-3.25ex plus -1ex minus-.2ex}{1.5ex plus.2ex}{\normalfont\it}}
\DeclareMathOperator{\Hom}{Hom}
\DeclareMathOperator{\colim}{colim}
\DeclareMathOperator{\spec}{Spec}
\renewcommand{\leq}{\leqslant}
\renewcommand{\geq}{\geqslant}
\renewcommand{\phi}{\varphi}
\renewcommand{\kappa}{\varkappa}
\newtheorem{theorem}{Theorem}[section]
\newtheorem*{theorem*}{Theorem~\ref{ZM_fr_and_LM_fr}}
\newtheorem{lemma}[theorem]{Lemma}
\newtheorem{proposition}[theorem]{Proposition}
\newtheorem{corollary}[theorem]{Corollary}
\newtheorem{definition}[theorem]{Definition}
\newtheorem{notation}[theorem]{Notation}
\newtheorem{remark}[theorem]{Remark}
\newtheorem{example}[theorem]{Example}
\newtheorem{construction}[theorem]{Construction}
\newcommand{\bl}[1]{\buildrel #1\over}
\newcommand{\bb}{\mathbb}
\newcommand{\A}{\mathbb{A}}
\newcommand{\Spec}{\operatorname{Spec}}
\newcommand{\pt}{{\rm pt}}
\newcommand{\Gm}{{\mathbb{G}_m}}
\newcommand{\ZF}{\operatorname{\mathbb{Z}F}}
\newcommand{\Zf}{\operatorname{\mathbb{Z}f}}
\newcommand{\ZFr}{\operatorname{\mathbb{Z}Fr}}
\newcommand{\Fr}{\operatorname{Fr}}
\newcommand{\id}{\operatorname{id}}
\newcommand{\cc}{\mathcal}
\newcommand{\F}{\operatorname{F}}
\newcommand{\f}{\operatorname{f}}
\newcommand{\op}{{\textrm{\rm op}}}
\begin{document}
\sloppy

\title{Framed motives of relative motivic spheres}

\author{Grigory Garkusha}
\address{Department of Mathematics, Swansea University, Fabian Way, Swansea SA1 8EN, UK}
\email{g.garkusha@swansea.ac.uk}

\author{Alexander Neshitov}
\address{Department of Mathematics, University of Western Ontario,
London, Ontario N6A 5B7, Canada}
\email{aneshito@uwo.ca}

\author{Ivan Panin}
\address{St.~Petersburg Branch of V. A. Steklov Mathematical Institute,
Fontanka 27, 191023 St. Petersburg, Russia}


\email{paniniv@gmail.com}

\thanks{This paper was partly written during the visit of the second author to
Swansea University. He would like to thank the University for its
kind hospitality.}

\keywords{Motivic homotopy theory, framed motives, motivic spheres}

\subjclass[2010]{14F42, 55P42}

\begin{abstract}
The category of framed correspondences $\Fr_*(k)$ and framed sheaves
were invented by Voevodsky in his unpublished notes~\cite{V2}. Based
on the theory, framed motives are introduced and studied
in~\cite{GP1}.
These are Nisnivich sheaves of $S^1$-spectra and the major computational tool of~\cite{GP1}.
The aim of this paper is to show the following result
which is essential in proving the main theorem of~\cite{GP1}: given an infinite perfect base field $k$,
any $k$-smooth scheme $X$ and any $n\geq 1$, the map of simplicial
pointed  Nisnevich sheaves $(-,\A^1//\mathbb G_m)^{\wedge n}_+\to T^n$ induces
a Nisnevich local level weak equivalence of $S^1$-spectra
   $$M_{fr}(X\times (\A^1// \mathbb G_m)^{\wedge n})\to M_{fr}(X\times T^n).$$
Moreover, it is proven that the sequence of $S^1$-spectra
   $$M_{fr}(X \times T^n \times \mathbb G_m) \to M_{fr}(X \times T^n \times\bb A^1) \to M_{fr}(X \times T^{n+1})$$
is locally a homotopy cofiber sequence in the Nisnevich topology.
Another important result of this paper shows that homology of framed
motives is computed as linear framed motives in the sense
of~\cite{GP1}. This computation is  crucial for the whole machinery
of framed motives~\cite{GP1}.
\end{abstract}

\maketitle

\thispagestyle{empty} \pagestyle{plain}


\section{Introduction}

Based on Voevodsky's theory of framed correspondences~\cite{V2}, the
machinery of framed motives has been introduced and developed in~\cite{GP1}.
This machinery leads to serious computations not only in motivic homotopy theory, but also in classical algebraic topology.
One of such computations~\cite[Theorem~11.9]{GP1} states that if $k$ is the field of
complex numbers, then the framed motive $M_{fr}(pt)(pt)$ of the point $pt = Spec (k)$ evaluated at $pt$
has the stable homotopy type of the classical sphere spectrum $\Sigma^{\infty}_{S^1}(S^0)$. In particular,
the sphere spectrum $\Sigma^{\infty}_{S^1}(S^0)$ can be computed in terms of algebraic varieties only.

The key ingredients for this computation are the theorem~\cite[Theorem~11.1]{GP1} computing the motivic sphere
spectrum in terms of twisted framed motives, the Cancellation Theorem for framed motives~\cite{AGP} as well as
a result by Levine~\cite[Corollary~2]{L}.
In turn, a key ingredient for proving~\cite[Theorem 11.1]{GP1} is this:
for each integer $n\geq 1$ the canonical morphism of motivic spaces
$C_*Fr((\bb A^1//\bb G_m)^{\wedge n}) \xrightarrow{}C_*Fr(T^{n})$
is a Nisnevich local equivalence. But this is exactly a partial case of the first statement
of Theorem~\ref{cone} proven in this paper.

The machinery of framed motives also produces explicit fibrant
resolutions for the suspension $\bb P^1$-spectrum $\Sigma^\infty_{\bb P^1}X_+$
of a $k$-smooth algebraic variety $X\in Sm/k$~\cite[Theorem~4.1]{GP1} or, more generally, for
the suspension $\bb P^1$-spectrum $\Sigma^\infty_{\bb P^1}\cc X$ of any pointed motivic space
$\cc X$~\cite[Section~10]{GP1}.
Theorem~\ref{cone} and its Corollary~\ref{cormain} proven in this paper play a key role
in the proof of \cite[Theorem~4.1]{GP1} as well as for results of~\cite[Section~10]{GP1}.

Indeed, one of the major steps in the proof of \cite[Theorem 4.1]{GP1}
is to show that for each integer $n\geq 1$ the canonical morphism
$C_*Fr(X\times T^n)_f\to\underline{\Hom}(\bb P^{\wedge 1},C_*Fr(X\times T^{n+1})_f)$
is a section-wise weak equivalence of motivic spaces (see~\cite[Section~9]{GP1} for details). Here $f$ stands for
a fibrant replacement within the injective local model structure.
Theorem~\ref{cone} and its Corollary~\ref{cormain} proven in this paper
are used in \cite[Section 9]{GP1} to reduce this major step to the following two statements.
The first one requires to show that for each
$Y\in Sm/k$ the canonical morphism
   $$M_{fr}(Y)_{\text{f}} \to \underline{\Hom}(\bb G_m^{\wedge 1},
       M_{fr}(Y\times\bb G_m^{\wedge 1})_{\text{f}})$$
is a sectionwise stable equivalence of $S^1$-spectra. Here $``\text{f}"$ refers to
a stable Nisnevich local fibrant replacement of $S^1$-spectra.
The second one requires to show that for each
$Z\in Sm/k$ the canonical morphism
$$M_{fr}(Z)\to \underline{\Hom}(S^1,M_{fr}(Z\otimes S^1))$$
is a sectionwise stable equivalence of $S^1$-spectra.
As explained in \cite[Section 9]{GP1} the first statement is nothing but the Cancellation
Theorem for framed motives~\cite[Theorem~A]{AGP}. The second statement follows from
the Additivity Theorem in~\cite[Theorem 6.5]{GP1}.

In order to formulate Theorem \ref{cone} below, let us briefly describe the relevant notions.
Let $\Fr_0(k)$ be the category whose objects are those of $Sm/k$ and
whose morphism set between $X$ and $Y$ is given by
the set of framed correspondences of level zero \cite[Example 2.1]{V2}, \cite[Definition 2.1]{GP1}.
As it is shown in~\cite[Section~5]{GP1}, the category of framed correspondences
of level zero $\Fr_0(k)$ has an action by finite pointed sets $U\otimes
K:=\bigsqcup_{K\setminus *}U$ with $U\in Sm/k$ and $K$ a finite pointed
set. The cone of $U$ is the simplicial object $U\otimes I$ in
$\Fr_0(k)$, where $(I,1)$ is the pointed simplicial set $\Delta[1]$
with basepoint 1. There is a natural morphism $i_0:U\to U\otimes I$
in $\Delta^{\textrm{\op}}\Fr_0(k)$. Given an inclusion of smooth
schemes $j: U\hookrightarrow Y$, denote by $Y//U$ a simplicial
object in $\Fr_0(k)$ which is obtained by taking the pushout of the diagram
   $Y\hookleftarrow U\bl{i_0}\hookrightarrow U\otimes I$
in $\Delta^{\textrm{\op}}\Fr_0(k)$.
The simplicial object $Y//U$ termwise equals
   $Y,Y\sqcup U,Y\sqcup U\sqcup U,\ldots$ .
Recall that the category $\Fr_0(k)$ is a full subcategory of
$SmOp(\Fr_0(k))$ (see~\cite[Section~5]{GP1} for definitions).
If $j: U\hookrightarrow Y$ is an open inclusion, then we have
an object
$(Y,U)\in SmOp(\Fr_0(k))$. Regard it as an object of the category
$\Delta^{\op}(SmOp(\Fr_0(k)))$ which is constant in the
simplicial direction. Then there is an obvious morphism
$\alpha: Y//U \to (Y,U)$ in the latter category. Namely, at each simplicial level $m$,
$\alpha$ is a morphism in $SmOp(\Fr_0(k))$
   $$\alpha_m: Y\sqcup U\sqcup \ldots \sqcup U \to (Y,U)$$
such that $\alpha_m|_Y=\id_Y: (Y,\emptyset)\to (Y,U)$,
$\alpha_m|_U=j: (U,\emptyset)\to (Y,U)$. The categories $\Fr_0(k)$
and $SmOp(\Fr_0(k))$ are both symmetric monoidal
(see~\cite[Section~5]{GP1}) and the inclusion of $\Fr_0(k)$
into $SmOp(\Fr_0(k))$ is a strict monoidal functor.
Thus we have the $n$th monoidal power $(Y//U)^{\wedge n}$ (respectively $(Y,U)^{\wedge n}$) of
$Y//U$ (respectively of $(Y,U)$) and a morphism $\alpha^{\wedge n}: (Y//U)^{\wedge n}\to (Y,U)^{\wedge n}$,
which is the $n$th monoidal power of $\alpha$.
For any $k$-smooth variety $X$ one has the morphism
$\id_X\times \alpha^{\wedge n}: X\times (Y//U)^{\wedge n}\to X\times (Y,U)^{\wedge n}$
in $\Delta^{\op}(SmOp(\Fr_0(k)))$. Applying the functor
$M_{fr}$ from~\cite[Definition~5.2]{GP1} to this morphism,
we get a morphism of framed $S^1$-spectra
   $$M_{fr}(X\times (Y//U)^{\wedge n}) \xrightarrow{M_{fr}(\id_X\times\alpha^{\wedge n})} M_{fr}(X\times (Y,U)^{\wedge n}).$$
In the special case when $(Y,U)=(\A^1,\bb G_m)$, the $S^1$-spectrum $M_{fr}(X\times (Y,U)^{\wedge n})$
will be denoted by $M_{fr}(X\times T^n)$. Note that $X\times(\A^1,\mathbb G_m)^{\wedge n}=(X\times\bb A^n,X\times(\bb A^n-0))$.


As explained above, the computation of an explicit motivically fibrant resolution of the suspension $\bb P^1$-spectrum
$\Sigma^\infty_{\bb P^1}X_+$, given in~\cite[4.1]{GP1}, requires
Theorem~\ref{cone} and its Corollary~\ref{cormain}.

\begin{theorem}\label{cone}
Let $k$ be an infinite perfect field.
For any $k$-smooth scheme
$X\in Sm/k$ and any $n\geq 1$,
the morphism $\alpha^{\wedge n}: (\A^1// \mathbb G_m)^{\wedge n}\to (\A^1,\mathbb G_m)^{\wedge n}$
of simplicial objects in $SmOp(\Fr_0(k))$
induces a level Nisnevich local weak equivalence of $S^1$-spectra
   $$M_{fr}(\id_X\times \alpha^{\wedge n}): M_{fr}(X\times (\A^1// \mathbb G_m)^{\wedge n})\to M_{fr}(X\times T^n).$$
Moreover, the sequence of $S^1$-spectra
   $$M_{fr}(X \times T^n \times \mathbb G_m) \to M_{fr}(X \times T^n \times\bb A^1) \to M_{fr}(X \times T^{n+1})$$
is locally a homotopy cofiber sequence in the Nisnevich topology.
\end{theorem}

The main goal of the paper is to prove this theorem and its consequence concerning
the natural morphism~\eqref{eq:main_l_w_equivalence} described below.
The first part of Theorem~\ref{cone} states that
the $S^1$-spectum $M_{fr}(X\times T^n)$ of the relative motivic sphere
$X\times \bb A^n/(X\times(\bb A^n-0))$ can locally be computed as
the $S^1$-spectrum $M_{fr}(X\times (\A^1//\mathbb G_m)^{\wedge n})$ of the
simplicial object $X\times(\A^1//\mathbb G_m)^{\wedge n}$
in $\Fr_0(k)$.
Another consequence of the theorem says that for every $n\geq 0$ the natural morphism
   \begin{equation}\label{eq:main_l_w_equivalence}
   M_{fr}(X\times T^n\times(\A^1//\mathbb G_m))\to M_{fr}(X\times T^{n+1})
   \end{equation}
is a level weak equivalence of $S^1$-spectra locally in the
Nisnevich topology (see Corollary~\ref{cormain}).
As explained above,
the proof of~\cite[4.1]{GP1} depends on the local
equivalence~\eqref{eq:main_l_w_equivalence}.

Theorem~\ref{cone} and its Corollary~\ref{cormain} concerning the local
equivalence~\eqref{eq:main_l_w_equivalence} are proved in Section~\ref{sec:cone}.

We have already discussed the importance of Theorem~\ref{cone}. Let us explain why
it is highly non-trivial. Indeed, using Linearisation Theorem~\ref{ZM_fr_and_LM_fr} stated below one can check
that the second assertion of Theorem~\ref{cone} is equivalent to the assertion
that the morphism $\tau$ from Theorem~\ref{th:Main} is a Nisnevich local equivalence.
For simplicity take $X=\pt$ and $n=0$. In this case
the domain of $\tau$ is the complex $C_*(\ZF(\A^1)/\ZF(\Gm))$ of linear framed presheaves.
The codomain of $\tau$ is the complex $C_*\ZF(T)$ of linear framed presheaves
and $\tau=C_*(p)$, where
   $$p: \ZF(\A^1)/\ZF(\Gm)\to \ZF(T)$$
is a natural morphism of presheaves. 
The morphism $p$ is far from being Nisnevich locally
an isomorphism, because otherwise Lemma~\ref{l:weak-eq_and_C_*} would imply that
$\tau=C_*(p)$ is locally a quasi-isomorphism of complexes.
However, it is not the case: the morphism of presheaves
   $$p: \ZF(\A^1)/\ZF(\Gm)\to \ZF(T)$$
is not locally an isomorphism (see Example~\ref{ZF_qf_is_not_ZF}).
This morphism is locally a monomorphism only. Its image is locally
$\ZF^{qf}(T)$ with $\ZF^{qf}(T)$ being a certain linear framed subpresheaf of $\ZF(T)$.
The presheaf $\ZF^{qf}(T)$ is of independent interest.
An advantage of $\ZF^{qf}(T)$ is that its sections are described by explicit geometric data.
Using properties of $\ZF^{qf}(T)$, Theorem~\ref{th:Main} then splits in two further
Theorems~\ref{p:main} and~\ref{p:moving} formulated below.
As we can see, Theorem \ref{cone} requires several non-trivial reductions showing
that the theorem itself is a highly non-trivial result.

In the rest of the introduction we describe the steps required for the proof of Theorem \ref{cone}
as well as fix some notation.
The following theorem (see Theorem~\ref{ZM_fr_and_LM_fr}) is crucial
in the analysis of framed motives. It allows to reduce many
computations for framed motives of algebraic varieties to analogous
computations for complexes of linear framed presheaves, which are
normally much simpler. In particular, the theorem computes homology of framed
motives. As an application of Theorem~\ref{ZM_fr_and_LM_fr},
Theorem~\ref{cone} reduces to Theorem~\ref{th:Main}. It is as well
worthwhile to mention another similar application of this kind.
In~\cite{AGP} the Cancellation Theorem for framed motives of
algebraic varieties is proved by reducing it to complexes of linear
framed presheaves.

We are now in a position to formulate the Linearisation Theorem.

\begin{theorem}[Linearisation]\label{ZM_fr_and_LM_fr}
For any integer $m\geq 0$, the natural morphism of framed
$S^1$-spectra
   $$\lambda_{X\times T^m}:\bb Z\Fr_*^{S^1}(X\times T^m)\to EM(\ZF_*(-,X\times T^m))$$
is a schemewise stable equivalence. Moreover,
the natural morphism of framed $S^1$-spectra
   $$l_{X\times T^m}: \bb ZM_{fr}(X\times T^m)\to LM_{fr}(X\times T^m)$$
is a schemewise stable equivalence. In particular, for any $U\in
Sm/k$ one has
   $$\pi_*(\bb ZM_{fr}(X\times T^m)(U))=H_*(\ZF(\Delta^\bullet\times U,X\times T^m))=H_*(C_*\bb Z\F(U,X\times T^m)).$$
\end{theorem}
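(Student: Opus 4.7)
The plan is to recognise $\bb Z\Fr_*^{S^1}(X_+\wedge T^m)$ schemewise as a generalised Eilenberg--MacLane spectrum and to identify it, via this recognition, with $EM(\ZF_*(-,X_+\wedge T^m))$. The key structural input is that the pointed-set action $(-)\otimes K$ on $\Fr_0(k)$ recalled in the introduction, combined with freely $\bb Z$-linearising framed correspondences, makes each level $\bb Z\Fr_n^{S^1}(X_+\wedge T^m)(U)$ into a simplicial abelian group; the bonding maps $S^1\wedge \bb Z\Fr_n^{S^1}\to \bb Z\Fr_{n+1}^{S^1}$ are then induced by smashing with $S^1=\Delta[1]/\partial\Delta[1]$.

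First I would verify, via Dold--Kan, that after $\bb Z$-linearisation the bonding map corresponds to the degree shift on normalised chain complexes --- equivalently, that the $\sigma$-suspension of framed correspondences becomes, after $\bb Z$-linearisation, the suspension in the derived category of abelian groups. I expect this compatibility to be the main obstacle: it requires a direct computation, using the simplicial contractions on $\Fr$ coming from the $\Delta[1]$-tensor structure, to show that the linearised $\sigma$-suspension differs from the canonical shift only by a natural chain homotopy. Once this is in place, the classical recognition principle for connective $S^1$-spectra of simplicial abelian groups (Dold--Kan levelwise, combined with the loop-shift adjunction) identifies $\bb Z\Fr_*^{S^1}(X_+\wedge T^m)(U)$ with the Eilenberg--MacLane spectrum of the normalised chain complex of $\bb Z\Fr_0^{S^1}(X_+\wedge T^m)(U)$, which is precisely $\ZF_*(-,X_+\wedge T^m)(U)$. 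By construction the comparison is $\lambda_{X_+\wedge T^m}$, so $\lambda$ is a schemewise stable equivalence.

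The second assertion follows by applying $C_*$ to $\lambda$: by the constructions of~\cite{GP1}, $\bb ZM_{fr}(X\times T^m)$ is the $C_*$-resolution of $\bb Z\Fr_*^{S^1}(X_+\wedge T^m)$ and $LM_{fr}(X\times T^m)$ the $C_*$-resolution of $EM(\ZF_*(-,X_+\wedge T^m))$, so $l_{X\times T^m}=C_*\lambda_{X_+\wedge T^m}$ is a schemewise stable equivalence whenever $\lambda$ is. The concluding formula for $\pi_*(\bb ZM_{fr}(X\times T^m)(U))$ is then the standard computation $\pi_*(EM(C_\bullet))=H_*(C_\bullet)$ applied to the simplicial abelian group $\ZF(\Delta^\bullet\times U,X_+\wedge T^m)$, combined with the Dold--Kan identification of its normalised chains with $C_*\bb Z\F(U,X_+\wedge T^m)$.
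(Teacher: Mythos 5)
Your proposal for the first (and central) assertion misidentifies where the difficulty lies, and in doing so skips the actual content of the theorem.

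The two Segal spectra $\bb Z\Fr_*^{S^1}(X_+\wedge T^m)$ and $EM(\ZF_*(-,X_+\wedge T^m))$ are both built by evaluating $\Gamma$-spaces on spheres, so the bonding maps on both sides arise identically from the $\Gamma$-structure; there is no ``$\sigma$-suspension vs.\ degree shift'' issue to reconcile (the $\sigma$-suspension is entirely internal to forming $\ZFr_*=\bigoplus_m\ZFr_m$ and $\ZF_*=\bigoplus_m\ZF_m$, and is not the bonding map of either spectrum). The genuine difficulty is that the $\Gamma$-space $K\mapsto\ZFr_*(U,(X_+\wedge T^m)\otimes K)$ is \emph{not} special: a framed correspondence whose support $Z$ has components landing in different wedge summands does not decompose, so $\ZFr_*(U,(X\sqcup X')_+\wedge T^m)\ne\ZFr_*(U,X_+\wedge T^m)\oplus\ZFr_*(U,X'_+\wedge T^m)$. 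Only after imposing the additivity relations that define $\ZF$ does one get a special $\Gamma$-space, and hence a genuine Eilenberg--MacLane spectrum. Your claim that ``the normalised chain complex of $\bb Z\Fr_0^{S^1}(X_+\wedge T^m)(U)$ is precisely $\ZF_*(-,X_+\wedge T^m)(U)$'' is therefore false as stated: the zeroth space is $\ZFr_*(U,X_+\wedge T^m)$, which is strictly larger than $\ZF_*(U,X_+\wedge T^m)$, and a Dold--Kan/loop-shift recognition argument alone cannot produce the quotient.

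What is actually needed --- and what the paper's Appendix~B proves via Lemmas~\ref{l:cup_S_A_and_cup_S'_A}--\ref{l:key} and Theorem~\ref{p:Fr_and_F} --- is that the sub-$\Gamma$-space $\Gamma'_m$ built from framed correspondences with \emph{connected} support includes into $\Gamma_m$ by a map that becomes a stable equivalence after $Seg$. This is a genuine combinatorial-homotopical statement: one writes $Seg(\Gamma_m)$ as a filtered union of finite products of sphere spectra $\bb S_{A(\Phi)}$ indexed by framed correspondences $\Phi$ (via the map $inc_m$ that sends a framed correspondence to the tuple of labels of its connected components), identifies $Seg(\Gamma'_m)$ with the corresponding union of wedges $\bb S'_{A(\Phi)}$, and invokes the stable equivalence $\vee\hookrightarrow\prod$. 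Since $\F_m-0_m$ freely generates $\ZF_m$, the linearised inclusion composed with $\lambda$ is the identity, which forces $\lambda$ to be a stable equivalence. Your proposal has no substitute for this step; without it, the conclusion does not follow. The remaining two parts of your argument (applying $C_*$, and computing $\pi_*$ via Dold--Kan) are correct as consequences of the first, and indeed match how the paper treats them.
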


Notation used in Theorem \ref{ZM_fr_and_LM_fr} is explained in Section~\ref{sec:cone}.
The theorem itself is proved in Appendix B. Thus in order to prove Theorem~\ref{cone},
it is sufficient to prove the following theorem.

\begin{theorem}\label{th:Main}
Let $k$ be an infinite perfect field. For any $k$-smooth scheme $X$
and any $n\geq 0$ the natural map of complexes of linear framed
presheaves
   $$\tau:C_*\ZF(X\times T^n \times \A^1)/C_*\ZF(X\times T^n \times \Gm)\to C_*\ZF(X\times T^{n+1})$$
is a Nisnevich local equivalence.
\end{theorem}

The proof of Theorem~\ref{th:Main} splits in two steps, each of
which is of independent interest. Firstly we introduce a linear
framed subpresheaf $\ZF^{qf}(X\times T^{n+1})$ of the linear
framed presheaf $\ZF(X\times T^{n+1})$ and prove the following

\begin{theorem}\label{p:main}
For any $k$-smooth scheme $X$ and any $n\geq 0$, the natural
morphism
   $$\ZF(X\times T^n\times \A^1)/\ZF(X\times T^n\times \Gm)\to \ZF^{qf}(X\times T^{n+1})$$
of linear framed presheaves is an isomorphism locally in the
Nisnevich topology.
\end{theorem}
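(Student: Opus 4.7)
The plan is to unwind both sides as explicit sets of framed correspondences and then exhibit the map as a bijection after Nisnevich localization by invoking Zariski's Main Theorem on a quasi-finite (but not a priori finite) closed subset.

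\medskip

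\textbf{Step 1 (Unpacking the definitions).} An element of $\ZF^{qf}(X_+\wedge T^{n+1})(U)$ is represented, at level $N$, by a datum $(V,\phi_1,\ldots,\phi_{N+n+1},g)$ where $V$ is an étale neighbourhood of $Z=\{\phi_1=\cdots=\phi_{N+n+1}=0\}\subset\A^N_U$ with $Z$ finite over $U$, and $g:V\to X$. The definition of $\ZF^{qf}$ will impose that the larger zero set
$Z_{\leq n}:=\{\phi_1=\cdots=\phi_{N+n}=0\}\subset V$
is \emph{quasi-finite} over $U$. An element of $\ZF(X_+\wedge T^n\wedge\A^1_+)(U)$ is similarly $(V,\phi_1,\ldots,\phi_{N+n},f,g)$ with $\{\phi_1=\cdots=\phi_{N+n}=0\}$ already \emph{finite} over $U$, and $f:V\to\A^1$ contributing the $\A^1_+$-factor, and it lies in $\ZF(X_+\wedge T^n\wedge \Gm_+)$ precisely when $f$ factors through $\Gm$.

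\medskip

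\textbf{Step 2 (The map; well-definedness and injectivity).} Sending $(V,\phi_1,\ldots,\phi_{N+n},f,g)$ to $(V,\phi_1,\ldots,\phi_{N+n},f,g)$ with the last $f$ reinterpreted as the framing $\phi_{N+n+1}$ produces an element of $\ZF(X_+\wedge T^{n+1})$; the qf condition is automatic since $Z_{\leq n}$ is even finite over $U$, and the new support $Z_{\leq n}\cap f^{-1}(0)$ is closed in $Z_{\leq n}$ hence finite. If $f$ factors through $\Gm$, the support is empty and the image vanishes, so the map descends to the quotient. Injectivity: if a representative maps to the zero element, then after possibly shrinking $V$ we have $Z_{\leq n}\cap f^{-1}(0)=\emptyset$, so on the open $V\setminus f^{-1}(0)$ the map $f$ lands in $\Gm$, witnessing membership in $\ZF(X_+\wedge T^n\wedge \Gm_+)$.

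\medskip

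\textbf{Step 3 (Nisnevich-local surjectivity — the main step).} Given $(V,\phi_1,\ldots,\phi_{N+n+1},g)$ representing a class in $\ZF^{qf}(X_+\wedge T^{n+1})(U)$, the problem is to replace the étale neighbourhood by one in which $Z_{\leq n}$ becomes finite over $U$, without changing the support $Z$. By the qf hypothesis, $Z_{\leq n}$ is quasi-finite over $U$ and contains the closed subset $Z$, which \emph{is} finite over $U$. Applying the Nisnevich-local form of Zariski's Main Theorem (equivalently, passing to a henselization of $U$ along $Z$ and then spreading out), one finds, Nisnevich-locally on $U$, an étale refinement $V'\to V$ and a decomposition
$Z_{\leq n}\times_V V' = Z' \sqcup Z''$
in which $Z'\supset Z$ is finite over $U$ and $Z''$ is closed in $V'$ with $Z''\cap Z=\emptyset$. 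Replacing $V'$ by the open subset $V'\setminus Z''$, the zero locus of $\phi_1,\ldots,\phi_{N+n}$ becomes exactly $Z'$, which is finite over $U$. The datum $(V'\setminus Z'',\phi_1,\ldots,\phi_{N+n},\phi_{N+n+1}, g)$ — with $\phi_{N+n+1}$ now reinterpreted as the $\A^1$-valued map $f$ — furnishes the required lift.

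\medskip

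\textbf{Step 4 (Independence from choices).} Two different Nisnevich-local shrinkings produced in Step~3 yield correspondences which agree after a further common refinement outside the zero set of $f$; their difference, supported away from $f=0$, represents an element of $\ZF(X_+\wedge T^n\wedge \Gm_+)$. Hence the lift is well-defined in the quotient. Combined with the injectivity of Step~2, this proves the isomorphism of Nisnevich sheaves associated to the given presheaf morphism.

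\medskip

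The principal obstacle is Step~3: one must upgrade the quasi-finiteness of $Z_{\leq n}$ to finiteness on a \emph{Nisnevich} refinement of $U$ without disturbing the other pieces of the framed-correspondence datum (the framings $\phi_i$ and the map $g$ must be compatible after the refinement). This is exactly where the qf condition is consumed, via the standard Zariski-Main-Theorem type decomposition around the finite support $Z$; verifying that the resulting Nisnevich refinement can be chosen functorially enough to produce a morphism of sheaves (not just a pointwise bijection on stalks) is the technical heart of the argument.
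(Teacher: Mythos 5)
Your overall strategy is aligned with the paper's: both reduce to checking bijectivity on Nisnevich stalks (henselian local $U$), and both convert quasi-finiteness of the intermediate zero locus $Z_{\leq n}$ into finiteness using the henselian property of the base (the paper's Remark~\ref{r:henselian_properties}, which quotes Milne I.4.2, is the clean stalk-local incarnation of the Zariski-Main-Theorem decomposition you invoke). However there are two genuine gaps.

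First, the injectivity argument in your Step~2 only works for a single framed correspondence, not for the $\mathbb Z$-linear combinations that the elements of $\ZF$ actually are. You say ``if a representative maps to the zero element, then after possibly shrinking $V$ we have $Z_{\leq n}\cap f^{-1}(0)=\emptyset$'' --- this is true for one correspondence with connected support, but an element of $\ZF_m$ is a linear combination, and vanishing in $\ZF^{qf}$ does not mean each summand has empty support. What makes the set-level argument carry over to the abelian groups is that $\F_m(U,-)\setminus\{0\}$ (correspondences with \emph{connected} support) is a free basis of $\ZF_m(U,-)$, and similarly for the $qf$-variant. The paper engineers its Lemma~\ref{l:pushout} precisely to be a pushout square of \emph{pointed sets} on the level of $\F_m$, and then Corollary~\ref{cor:pushout} linearizes using the free-basis structure. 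Without this step your injectivity claim for $\ZF$ does not follow from the statement you prove.

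Second, your Step~3--4 tries to construct the inverse map as a morphism of Nisnevich sheaves, which is why you flag ``functoriality of the Nisnevich refinement'' as the technical heart. But this worry is unnecessary and in fact sidesteps the cleaner argument: the morphism $\alpha_*$ of presheaves is already globally defined, so to show it is a Nisnevich-local isomorphism it suffices to check it induces a bijection on the basis elements $\F_m(U,-)$ over each henselian local $U$. That is exactly what the paper does, constructing a set-theoretic section $s$ of $p$ on $\F^{qf}_m(U,-)$ using $W=(\A^m\times U)^h_Z$: the locus $Y=Z(\phi_1,\ldots,\phi_{m+n})\subset W$ is local (being closed in the local scheme $W$), quasi-finite over the henselian local $U$, hence finite, and Lemma~\ref{l:closed-embedding} ensures that $can|_Y$ is a closed embedding with $can^{-1}(can(Y))=Y$, so the lift $(can(Y),W,\phi_1,\ldots,\phi_{m+n};\phi_{m+n+1};g)$ is a genuine element of $\F_m(U,X_+\wedge T^n\wedge\A^1_+)$. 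No spreading out, no extra \'etale refinement $V'$, no choice-dependence enters.
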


By applying the Suslin complex, one can show that the natural
morphism of complexes of linear framed presheaves
   \begin{equation}\label{eq:factor_complex}
    \mu:C_*\ZF(X\times T^n\times \A^1)/C_*\ZF(X\times T^n\times \Gm)\to C_*\ZF^{qf}(X\times T^{n+1})
   \end{equation}
is locally a quasi-isomorphism in the Nisnevich topology (see
Proposition~\ref{p:factor_complex} for details).

Secondly using a moving lemma discussed in Section~\ref{mlemma}, we
then prove the following

\begin{theorem}\label{p:moving}
The inclusion of complexes of linear framed presheaves
$$C_*\ZF^{qf}(X\times T^{n+1}) \hookrightarrow C_*\ZF(X\times T^{n+1})$$
is locally a quasi-isomorphism in the Zariski topology.
\end{theorem}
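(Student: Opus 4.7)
The plan is to check the statement on Zariski stalks, so fix a smooth local henselian scheme $U$ over $k$ and prove that the inclusion
$$C_*\ZF^{qf}(X_+\wedge T^{n+1})(U) \hookrightarrow C_*\ZF(X_+\wedge T^{n+1})(U)$$
is a quasi-isomorphism. Equivalently, I would show the quotient complex is acyclic. Since both sides are filtered colimits of the corresponding level-$m$ pieces, it suffices to deal with a single level and keep track of estimates as $m$ grows.

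The central input is the moving lemma of Section~\ref{mlemma}. Given a framed correspondence $c$ of level $m$ from $U\times \Delta^n$ to $X_+\wedge T^{n+1}$, represented by a support $Z\subset \A^m_{U\times\Delta^n}$ together with framing and trivialization data, I would use the moving lemma to produce a one-parameter deformation $H\in \ZF(U\times\Delta^n\times\A^1,X_+\wedge T^{n+1})$ whose restriction $H|_{t=0}$ recovers $c$ and whose restriction $H|_{t=1}$ lies in the quasi-finite sub-presheaf $\ZF^{qf}$. Concretely, this deformation is built by applying a generic linear change of coordinates in the $\A^m$ factor, rescaled by the parameter $t$, which after shrinking $U$ around its closed point puts the projection of $Z$ to $U\times\Delta^n$ into quasi-finite position over the distinguished coordinate of $T^{n+1}$.

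To upgrade this termwise statement into a chain-level null-homotopy of the quotient, I would perform the deformation uniformly across simplicial degrees, that is, construct for every $n$ a map
$$s_n\colon C_n\ZF(X_+\wedge T^{n+1})(U)\to C_{n+1}\ZF(X_+\wedge T^{n+1})(U)$$
by pulling the universal homotopy $H$ back along the standard map $\Delta^{n+1}\to\Delta^n\times\A^1$ that encodes the algebraic simplicial $\A^1$-contraction. Checking that the $s_n$ commute with face and degeneracy maps modulo $C_*\ZF^{qf}$ then gives a contracting chain homotopy on the cokernel, proving acyclicity.

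The main obstacle is the moving lemma itself. One has to modify the support of a framed correspondence so that its projection becomes quasi-finite after shrinking $U$, while preserving (or controllably adjusting) the etale/framing data and the trivializations along the formal neighborhood; simply moving the underlying cycle is not enough. A secondary technical point is to perform this modification in a way that is natural in the simplicial variable, so that the termwise homotopies assemble into an honest chain homotopy rather than yielding only a degreewise null-homotopy. Both points are exactly what the moving lemma of Section~\ref{mlemma} is designed to handle, and the reduction above shows that, granted it, the theorem follows.
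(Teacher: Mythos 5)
Your high-level strategy---deform each correspondence into quasi-finite position by an $\A^1$-homotopy, then conclude that the inclusion is a quasi-isomorphism---is indeed the paper's strategy, but the specific moving mechanism you propose would not work, and you are missing a structural device that the paper needs to make the homotopy presheaf-level and functorial.

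First, the deformation itself. You suggest ``a generic linear change of coordinates in the $\A^m$ factor, rescaled by the parameter $t$.'' A linear perturbation of the framing is not enough: the scheme-theoretic vanishing locus $Z(\phi_1,\dots,\phi_{m+n})$ in $W$ can contain positive-dimensional fibers over $U$ that a generic linear shear does not cut down, because the image of a fiber $W_u$ in $\A^{m+n+1}$ can already lie in an arbitrary hypersurface. What the paper does instead is a \emph{nonlinear power shear}
$$t_d(c)=(Z,W,\phi_1-\phi_{m+n+1}^{d},\;\phi_2-\phi_{m+n+1}^{d^2},\;\dots,\;\phi_{m+n}-\phi_{m+n+1}^{d^{m+n}},\;\phi_{m+n+1};g),$$
with the parametric version $h^d(c)$ replacing the exponents' coefficients by the homotopy parameter $s$. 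Quasi-finiteness of the shifted framing then rests on Lemma~\ref{lem:tnonzero}: for a polynomial $F$ of degree $<d$, the restriction $F(st^{d^r},\dots,st^{d},t)$ is nonzero because the weights $(d^r,\dots,d,1)$ put distinct monomials into distinct $t$-degrees. This number-theoretic separation of exponents is exactly what a linear (or even bounded-degree) shear cannot achieve.

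Second, and relatedly, you cannot choose a single deformation $H$ uniformly for all sections: the exponent $d$ must dominate the degree of a defining family of polynomials for $c$, and that degree is unbounded. The paper resolves this by introducing the exhaustive filtration $\Fr_m^{<d}\subset\Fr_m$ by $c$-defining sets of degree $<d$ (Section~6) and constructing the $\A^1$-homotopies $H_d$, $H_d^{qf}$ on each filtered piece (Lemma~\ref{l:two_homotopies}); the quasi-isomorphism then follows by passing to the colimit over $d$ and $m$ (Corollary~\ref{cor:exhauting_2}, Proposition~\ref{ZF_qf_and_ZF}). This also dissolves your worry about compatibility of the $s_n$ with face and degeneracy maps: because $H_d$ is a genuine $\A^1$-homotopy of presheaves on $AffSm/k$, the functor $C_*$ automatically converts it into a simplicial homotopy, and there is no need to glue termwise homotopies by hand. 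Finally, the paper's argument is sectionwise over affine smooth schemes, so no shrinking of $U$ around its closed point is required; the Zariski-local conclusion in the theorem statement is a weakening, not a feature of the proof.
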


Clearly, Theorem~\ref{p:moving} together with the Nisnevich local
quasi-isomorphism~\eqref{eq:factor_complex} imply
Theorem~\ref{th:Main}.

Throughout the paper we denote by $Sm/k$ the category of smooth
separated schemes of finite type over the base field $k$. The
subcategory of affine smooth $k$-varieties is denoted by $AffSm/k$.
Given a scheme $W$ and a family of regular function
$\phi_1,\ldots,\phi_m$ on $W$, we write $Z(\phi_1,\ldots,\phi_{m})$
to denote the closed subset in $W$ which is the common vanishing locus
of the family $\phi_1,\ldots,\phi_m$. Whenever we speak about $\Gamma$-spaces
we follow the terminology of Bousfield--Friedlander~\cite{BF}.
The category of pointed Nisnevich sheaves (respectively framed sheaves)
will be denoted by $Shv_\bullet(Sm/k)$ (respectively $Shv_\bullet^{fr}(Sm/k)$). We shall
also write $Pre_{\cc Ab}(Sm/k)$ to denote the category of presheaves of Abelian groups on $Sm/k$.
Whenever the authors write ``locally", it is always assumed ``locally in the Nisnevich topology".
They also stress that motivic equivalences are
not used anywhere in this paper except the proof of Lemma~\ref{l:weak-eq_and_C_*}.

\section{Framed presheaves $\Fr(-,Y/(Y-S))$ and $\ZF(-,Y/(Y-S))$}\label{sec:framed_Y_Y-S}

\begin{definition}\label{def:FrY/Y-S}{\rm
(I) Let $Y$ be a $k$-smooth scheme and $S\subset Y$ be a closed
subset and let $U\in Sm/k$. An {\it explicit framed correspondence
of level $m\geq 0$ from $U$ to $Y/(Y-S)$} consists of the following
tuples:
   $$(Z,W,\phi_1,\ldots,\phi_{m};g:W\to Y),$$
where $Z$ is a closed subset of $U\times\bb A^m$, finite over $U$,
$W$ is an \'{e}tale neighborhood of $Z$ in $U\times\bb A^m$,
$\phi_1,\ldots,\phi_{m}$ are regular functions on $W$, $g$ is a
regular map such that $Z=Z(\phi_1,\ldots,\phi_{m})\cap g^{-1}(S)$.
The set $Z$ is called the {\it support\/} of the explicit framed
correspondence. We shall also write quadruples $\Phi = (Z,W,\phi;g)$
to denote explicit framed correspondences.

(II) Two explicit framed correspondences $(Z,W,\phi;g)$ and
$(Z',W',\phi';g')$ of level $m$ are said to be {\it equivalent\/} if
$Z=Z'$ and there exists an \'{e}tale neighborhood $W''$ of $Z$ in
$W\times_{\bb A^m_U}W'$ such that $\phi\circ pr$ agrees with
$\phi'\circ pr'$ and the morphism $g\circ pr$ agrees with $g'\circ
pr'$ on $W''$.

(III) A {\it framed correspondence of level $m$ from $U$ to
$Y/(Y-S)$\/} is the equivalence class of an explicit framed
correspondence of level $m$ from $U$ to $Y/(Y-S)$. We write
$\Fr_m(U,Y/(Y-S))$ to denote the set of framed correspondences of
level $m$ from $U$ to $Y/(Y-S)$. We regard it as a pointed set whose
distinguished point is the class $0_{Y/(Y-S),m}$ of the explicit
correspondence $(Z,W,\phi;g)$ with $W=\emptyset$.

(IV) If $S=Y$ then the set $\Fr_m(U,Y/(Y-S))$ is denoted by
$\Fr_m(U,Y)$ and is called the {\it set of framed correspondences of
level $m$ from $U$ to $Y$}.

(V) Following Voevodsky~\cite{V2}, the {\it category of framed
correspondences\/} $\Fr_*(k)$ has objects those of $Sm/k$ and its
morphisms are the sets $\Fr_*(U,Y):=\bigsqcup_{m\geq 0}\Fr_m(U,Y)$,
$U,Y\in Sm/k$.  The subcategory of $\Fr_*(k)$ of framed correspondences
of level zero will be denoted by $\Fr_0(k)$.

(VI) A {\it framed presheaf\/} is just a contravariant functor from
the category $\Fr_*(k)$ to sets.

}\end{definition}

Let $X,Y$ and $S$ be $k$-smooth schemes and let
\begin{gather*}
\Psi=(Z',\A^k\times V\xleftarrow{(\alpha,\pi')}
W',\psi_1,\psi_2,\dots,\psi_k;g:W'\to U)\in \Fr_k(V,U)
\end{gather*}
be an explicit correspondence of level $k$ from $V$ to $U$ and let
\begin{gather*}
\Phi=(Z,\A^m\times U\xleftarrow{(\beta,\pi)}
W,\varphi_1,\varphi_2,\dots,\varphi_m;g':W\to Y)\in \Fr_m(U,Y/(Y-S))
\end{gather*}
be an explicit correspondence of level $m$ from $U$ to $Y/(Y-S)$. We
define $\Psi^{*}(\Phi)$ as an explicit correspondence of level $k+m$
from $V$ to $Y/(Y-S)$ as
\[
(Z\times_U Z',\A^{k+m}\times V\xleftarrow{(\alpha,\beta,\pi')} W'\times_U W,\psi_1,\psi_2,\dots,\psi_k,\varphi_1,\varphi_2,\dots,\varphi_m,,g'\circ pr_W)\in
\Fr_{k+m}(V,Y/(Y-S)).
\]
Clearly, the pull-back operation $(\Psi,\Phi)\mapsto \Psi^{*}(\Phi)$
of explicit correspondences respects the equivalence relation on
them. We get a pairing
   \begin{equation}\label{eq:compos}
    \Fr_k(V,U)\times \Fr_m(U,Y/(Y-S))\to \Fr_{k+m}(V,Y/(Y-S))
   \end{equation}
making $\Fr_*(U,Y/(Y-S)):=\bigsqcup_{m\geq 0} \Fr_m(-,Y/(Y-S))$ a
$\Fr_*(k)$-presheaf.

Let $X,Y,S$ and $T$ be smooth schemes. There is an \textit{external
product}
 \begin{equation}\label{eq:ext_product}
\Fr_m(U,Y/(Y-S))\times \Fr_n(\pt,\pt) \xrightarrow{-\boxtimes -}
\Fr_{m+n}(U,Y/(Y-S))
 \end{equation}
given by
$$((Z,W,\varphi_1,\varphi_2,\dots,\varphi_m;g),(Z',W',\psi_1,...,\psi_n))\mapsto
(Z\times Z',W\times
W',\varphi_1,\varphi_2,\dots,\varphi_m,\psi_1,...,\psi_n;g).$$

Set $\sigma:= (\{0\},\A^1,id : A^1 \to \A^1,const : \A^1 \to pt)\in \Fr_1(pt,pt)$.
Denote by
$$\Sigma: \Fr_m(U,Y/(Y-S))\to \Fr_{m+1}(U,Y/(Y-S))$$
the map $\Phi \mapsto \Phi\boxtimes \sigma$. Following
Voevodsky~\cite{V2} we give the following

\begin{definition}\label{def:Fr}{\rm
We shall refer to the set
\[
\Fr(U,Y/(Y-S)):=\colim(\Fr_0(U,Y/(Y-S))\xrightarrow{\Sigma}\Fr_1(U,Y/(Y-S))
\xrightarrow{\Sigma} \Fr_2(U,Y/(Y-S)) \dots)
\]
as the \textit{set stable framed correspondences from $U$ to
$Y/(Y-S)$}. Clearly, $\Fr(-,Y/(Y-S))$ is a framed presheaf of
pointed sets with the empty framed correspondence being the
distinguished point.

Clearly, $\Fr(-,Y/(Y-S))$ is even \textit{a framed functor} in the sense of
Voevodsky \cite{V2} meaning that $\Fr(\emptyset)=*$ and
$\Fr(U_1\sqcup U_2,Y/(Y-S))=\Fr(U_1,Y/(Y-S))\times
\Fr(U_2,Y/(Y-S))$.

}\end{definition}

\begin{definition}[cf.~\cite{GP1}]\label{stab}{\rm
 Let $Y\in Sm/k$ and $S\subset Y$ be as in Definition~\ref{def:FrY/Y-S}. Let $U$ be a $k$-smooth scheme. Denote by
\begin{itemize}
\item[$\diamond$]
$\ZFr_m(U,Y/(Y-S)):=\widetilde{\mathbb{Z}}[\Fr_m(U,Y/(Y-S))]=\mathbb{Z}[\Fr_m(U,Y/(Y-S))]/\mathbb{Z}\cdot 0_{Y/(Y-S),m}$,
i.e the free abelian group generated by the set $\Fr_m(U,Y/(Y-S))$
modulo $\mathbb{Z}\cdot 0_{Y/(Y-S),m}$;

\item[$\diamond$]
$\ZF_m(U,Y/(Y-S)):=\ZFr_m(U,Y/(Y-S))/A$, where $A$ is the subgroup
generated by the elements
\begin{multline*}
(Z\sqcup Z', W,(\varphi_1,\varphi_2,\dots,\varphi_m);g) - \\
-(Z, W\setminus
Z',(\varphi_1,\varphi_2,\dots,\varphi_m)|_{W\setminus
Z'};g|_{W\setminus Z'}) - (Z',{W\setminus
Z},(\varphi_1,\varphi_2,\dots,\varphi_m)|_{W\setminus
Z};g|_{W\setminus Z}).
\end{multline*}
\end{itemize}
The elements of $\ZF_m(U,Y/(Y-S))$ are called {\it linear framed
correspondences from $U$ to $Y/(Y-S)$ of level $m$}.
}
\end{definition}

\begin{definition}\label{F_m_U_Y/(Y-S)}{\rm
Define $\F_m(U,Y/(Y-S))\subset \Fr_m(U,Y/(Y-S))$ as the subset consisting of
$(Z,W,\phi;g)\in \Fr_m(U,Y/(Y-S))$ such that $Z$ is connected.

Clearly, the set $\F_m(U,Y/(Y-S))-0_m$ is a free basis of the abelian group
$\ZF_m(U,Y/(Y-S))$. However, the assignment $U\mapsto \F_m(U,Y/(Y-S))$
is not a presheaf even on the category $Sm/k$.

Indeed, if $u\xrightarrow{i} U$ is a closed point and $(Z,W,\phi;g)\in \F _m(U,Y/(Y-S))\subseteq \Fr_m(U,Y/(Y-S))$,
then the support of $i^*(Z,W,\phi;g)\in \Fr_m(u,Y/(Y-S))$ is the closed subsetset $Z_u$ in $\A^m_u$.
Clearly, $Z_u$ is not connected in general.
}\end{definition}

The category of {\it linear framed correspondences $\ZF_*(k)$\/} is
defined in~\cite{GP1}. We shall also refer to contravariant functors
from the category $\ZF_*(k)$ to Abelian groups as {\it linear framed
presheaves}.

Set $\ZF_*(U,Y/(Y-S))=\bigoplus_{m\geq 0}\ZF_m(U,Y/(Y-S))$. The
pairing~\eqref{eq:compos} induces in a natural way a bilinear
pairing
\begin{equation}\label{eq:linear_comp}
\ZF_k(V,U)\times \ZF_m(U,Y/(Y-S))\to \ZF_{k+m}(U,Y/(Y-S)).
\end{equation}
The latter pairing makes $\ZF_*(-,Y/(Y-S))$ a linear framed
presheaf.

The external product~\eqref{eq:ext_product} induces in a natural way
an external product of the form
\begin{equation}\label{eq:linear_ext_product}
\ZF_m(U,Y/(Y-S)))\times \ZF_n(\pt,\pt) \xrightarrow{-\boxtimes -}
\ZF_{m+n}(U,Y/(Y-S))
\end{equation}

Let $Y\in Sm/k$ and $S\subset Y$ be as in
Definition~\ref{def:FrY/Y-S}. {\it One of the main linear framed
presheaves of this paper\/} we are interested in is defined as
$$\ZF(-,Y/(Y-S))=\colim(\ZF_0(-,Y/(Y-S))\xrightarrow{\Sigma}\ZF_1(-,Y/(Y-S))\xrightarrow{\Sigma} \ZF_2(-,Y/(Y-S))\xrightarrow{\Sigma}\dots).$$


\begin{definition}\label{def:SmOpFr_0}{\rm
Following~\cite[Definition~5.1]{GP1},
we define a category $SmOp(\Fr_0(k))$, which will often be used in
our constructions. Its objects are pairs $(X,U)$, where $X\in Sm/k$
and $U\subset X$ is an open subset. A morphism between $(X,U)$ and
$(X',U')$ in $SmOp(\Fr_0(k))$ is a morphism $f\in \Fr_0(X,X')$ such
that $f(U)\subset U'$. We shall also identify $X\in Sm/k$ with the
pair $(X,\emptyset)\in SmOp(\Fr_0(k))$.

}\end{definition}

The category $SmOp(\Fr_0(k))$ is symmetric monoidal with the monoidal
product $\wedge$ given by
   $$(X,U)\wedge (Y,V):=(X\times Y,X\times V\cup U\times Y).$$
The point $\pt$ is its monoidal unit. If $U=X-S$ and $V=Y-T$ then
   $$(X,X-S)\wedge (Y,Y-T)=(X\times Y,X\times(Y-T)\cup (X-S)\times Y)=(X\times Y,X\times Y-S\times T).$$
If $X\in Sm/k$ we will also write $X\times(Y,V)$ (respectively, $(Y,V)\times X$) to denote the object
$(X,\emptyset)\wedge(Y,V)=(X\times Y,X\times V)$ (respectively,
$(Y,V)\wedge(X,\emptyset)=(Y\times X,V\times X)$) of $SmOp(\Fr_0(k))$.
Also, by $(X,U)\sqcup (Y,V)$ we shall mean $(X \sqcup Y, U \sqcup V)$.

Define now covariant functors $\Fr_m:SmOp(\Fr_0(k))\to Shv_\bullet(Sm/k)$
and $\Fr:SmOp(\Fr_0(k))\to Shv^{fr}_\bullet(Sm/k)$.

\begin{construction}[The covariant functor $\Fr_m$]\label{Pairs_to_Shv}{\rm
For any object $(Y,Y-S)$ in $SmOp(\Fr_0(k))$ and any integer $m\geq 0$,
the value of $\Fr_m$ at $(Y,Y-S)$ is the sheaf $\Fr_m(-,Y/(Y-S))$.
For each $\f\in \Hom_{SmOp(\Fr_0(k))}((Y,Y-S),(Y',Y'-S'))$, write
$\f_{*,m}: \Fr_m(-,Y/(Y-S))\to \Fr_m(-,Y'/(Y'-S'))$ to denote the following morphism of sheaves:
given $U\in Sm/k$, it takes
$(Z,W,\phi_1,\ldots,\phi_{m};g)\in \Fr_m(U,Y/(Y-S))$ to
$(Z(\phi_1,\ldots,\phi_{m})\cap (\f\circ g)^{-1}(S'),W,\phi_1,\ldots,\phi_{m};\f\circ g)\in \Fr_m(-,Y'/(Y'-S'))$.
Clearly, the assignments $(Y,Y-S)\mapsto\Fr_m(-,Y/(Y-S))$ and $\f\mapsto \f_{*,m}$ form
a functor
   $$\Fr_m:SmOp(\Fr_0(k))\to Shv_\bullet(Sm/k).$$
Likewise, the assignments $(Y,Y-S)\mapsto\bb Z[\Fr_m(-,Y/(Y-S))]$ and $\f\mapsto \bb Z[\f_{*,m}]$ form a functor
   $$\bb Z[\Fr_m]: SmOp(\Fr_0(k)) \to Pre_{\cc Ab}(Sm/k)$$
as well as the assignments $(Y,Y-S)\mapsto\ZF_m(-,Y/(Y-S))]$ and $\f\mapsto \Zf_{*,m}$ form a functor
$$\ZF_m: SmOp(\Fr_0(k))\to Pre_{\cc Ab}(Sm/k).$$
}
\end{construction}

\begin{construction}[The covariant functor $\Fr$]\label{Pairs_to_fr_Shv}{\rm
For each integer $m\geq 0$ and each object $(Y,Y-S)$ in $SmOp(\Fr_0(k))$,
write $\Sigma_{m,Y/(Y-S)}$ for the morphism of sheaves $\Sigma: \Fr_m(-,Y/(Y-S))\to \Fr_{m+1}(-,Y/(Y-S))$
given in Definition \ref{def:Fr}.
It is easy to check that the assignment
$(Y,Y-S)\mapsto \Sigma_{m,Y/(Y-S)}$
gives rise to a natural transformation of functors
$\Sigma: \Fr_m\ \to \Fr_{m+1}$. Taking the colimit
$$\Fr=\colim(\Fr_0\xrightarrow{\Sigma}\Fr_1
\xrightarrow{\Sigma} \Fr_2\xrightarrow{\Sigma} \dots),$$
we get a functor from $SmOp(\Fr_0(k))$ to $Shv_\bullet(Sm/k)$.
As mentioned in Definition
\ref{def:Fr} for each object $(Y,Y-S)\in SmOp(\Fr_0(k))$ the sheaf $\Fr(-,Y/(Y-S))$ is naturally a framed Nisnevich sheaf.
Moreover, for each morphism
$f: (Y,Y-S)\to (Y',Y'-S')$ in $SmOp(\Fr_0(k))$ the morphism $\Fr(f): \Fr(-,Y/(Y-S))\to \Fr(-,Y'/(Y'-S'))$
is a morphism of framed Nisnevich sheaves.

The covariant functors $(Y,Y-S)\mapsto\bb Z[\Fr(-,Y/(Y-S))]$ and
$(Y,Y-S)\mapsto\ZF(-,Y/(Y-S))]$ from $SmOp(\Fr_0(k))$ to $Pre_{\cc Ab}^{fr}(Sm/k)$
are defined in a similar fashion.
}\end{construction}

\begin{notation}\label{para}{\rm
Given $X\in Sm/k$ and $n>0$, we will often write $\Fr_m(X\times T^n)$ (respectively, $\Fr(X\times T^n)$,
$\bb Z[\Fr_m](X\times T^n)$, $\bb Z[\Fr](X\times T^n)$, $\bb ZF_m(X\times T^n)$, $\bb ZF(X\times T^n)$) to denote the value of the functor
$\Fr_m$ (respectively, $\Fr$, $\bb Z[\Fr_m]$, $\bb Z[\Fr]$, $\bb ZF_m$, $\bb ZF$)
on the couple $X\times(\bb A^1,\bb G_m)^{\wedge n}\in SmOp(\Fr_0(k))$.
Their sections on $U\in Sm/k$ will also be denoted by
$\Fr_m(U,X\times T^n)$, $\Fr(U,X\times T^n)$,
$\bb Z[\Fr_m](U,X\times T^n)$, $\bb ZF_m(U,X\times T^n)$,
$\bb Z[\Fr](U,X\times T^n)$, $\bb ZF(U,X\times T^n)$ respectively. Notice that
$X\times(\bb A^1,\bb G_m)^{\wedge n}=(X\times\bb A^n,X\times(\bb A^n-0))$.
}\end{notation}

There is an obvious functor $spc:
SmOp(\Fr_0(k)) \to Shv_\bullet(Sm/k)$ sending an object $(X,U)\in
SmOp(\Fr_0(k))$ to the pointed Nisnevich sheaf $X/U$. Observe that this
functor preserves the monoidal product.

Recall from~\cite[Section~3]{GP1} that for any $n\geq 0$,
the endofunctor $\cc Fr_n:Shv_\bullet(Sm/k)\to Shv_\bullet(Sm/k)$ takes $\cc F\in Shv_\bullet(Sm/k)$
to $\underline{\Hom}_{Shv_\bullet(Sm/k)}(\bb P^{\wedge n},\cc F\wedge T^n)$.
The stabilization in the canonical morphism $\sigma:\bb P^{\wedge 1}\to T$ leads to the endofunctor
$\cc Fr:Shv_\bullet(Sm/k)\to Shv_\bullet(Sm/k)$.
It follows from ~\cite[Section~3]{GP1} that there are canonical isomorphisms between functors
   $$a_n:\Fr_n\xrightarrow{\cong}\cc Fr_n\circ spc\quad{\textrm{and}}\quad a:\Fr\xrightarrow{\cong}\cc Fr\circ spc$$
in the category of functors from $SmOp(\Fr_0(k))$ to $Shv_\bullet(Sm/k))$.

\section{Presheaves $\Fr^{qf}(-,Y/(Y-S)\wedge T)$ and $\ZF^{qf}(-,Y/(Y-S)\wedge T)$}\label{sec:Fr_qf_and_ZF_qf}

One of the main objectives of Sections~\ref{sec:Fr_qf_and_ZF_qf}, \ref{sec:geom lemmas} and \ref{s:Fr(X_T_n}
is to give an explicit description of the quotient presheaf
$\ZF(-,X\times T^{n}\times \A^1)/\ZF(-,X\times T^{n}\times \mathbb G_m)$ on essentially
smooth local henselian schemes. It turns out
that this quotient presheaf coincides on such schemes with a subpresheaf
$\ZF^{qf}(-,X\times T^{n+1})$ of the presheaf $\ZF(-,X\times T^{n+1})$ introduced in this section.
More precisely, the canonical morphism $p: (\A^1,\emptyset)\to (\bb A^1,\bb G_m)$ in $SmOp(\Fr_0k)$
induces a morphism of presheaves
   $$\bar p:\ZF(-,X\times T^{n}\times \A^1)/\ZF(-,X\times T^{n}\times {\mathbb G}_m) \to \ZF(-,X\times T^{n+1}).$$
Lemma \ref{l:ZF_qf_m_and_ZF_m}(2), Lemma \ref{l:pushout} and Corollary \ref{cor:pushout} imply that
the morphism $\bar p$ is locally a monomorphism and its image
coincides with the subpresheaf $\ZF^{qf}(-,X\times T^{n+1})$
of the presheaf $\ZF(-,X\times T^{n+1})$.
Examples given in this section show that
sections of this subpresheaf do not coincide with sections of
$\ZF(-,X\times T^{n+1})$
(even on the base field).
Thus the morphism $\bar p$
is far from being a local isomorphism.

\begin{notation}\label{rem: some equalities}{\rm
As recalled in Definition \ref{def:SmOpFr_0} the category $SmOp(\Fr_0(k))$ is symmetric monoidal.
Let $(Y,Y-S)\in SmOp(\Fr_0(k))$. Similarly to Notation~\ref{para} we shall write
$\Fr_m(-,Y/(Y-S)\wedge T)$ (respectively $\Fr(-,Y/(Y-S)\wedge T)$, $\ZF_m(-,Y/(Y-S)\wedge T)$,
$\ZF(-,Y/(Y-S)\wedge T)$) to denote the value of the functor $\Fr_m$ (respectively $\Fr$, $\ZF_m$, $\ZF$) on the couple
$(Y,Y-S)\wedge(\bb A^1,\bb G_m)=(Y\times \A^1,Y\times \A^1-S\times \{0\})$.
}\end{notation}

Following Notation~\ref{rem: some equalities}, a section of
$\Fr_m(-,Y/(Y-S)\wedge T)$ on $U\in Sm/k$ is given by a tuple
   $$(Z,W,\phi_1,\ldots,\phi_{m};g:W\to Y;f:W\to \A^1),$$
where $Z$ is a closed subset of $U\times\bb A^m$ finite over $U$,
$W$ is an \'{e}tale neighborhood of $Z$ in $U\times\bb A^m$,
$\phi_1,\ldots,\phi_{m},f$ are regular functions on $W$, $g$ is a
regular map such that
   $$Z=Z(\phi_1,\ldots,\phi_{m},f)\cap g^{-1}(S).$$
The suspension map
   $$\Sigma: \Fr_m(-,Y/(Y-S)\wedge T)\to\Fr_{m+1}(-,Y/(Y-S)\wedge T)$$
evaluated on $U\in Sm/k$ sends
$$(Z,W,\phi_1,\ldots,\phi_{m};g;f) \ \ \textrm{to} \ \ (Z\times \{0\},W\times \A^1,\phi_1\circ pr_W,\ldots,\phi_{m}\circ pr_W,pr_{\A^1};g\circ pr_W;f\circ pr_W).$$
For brevity, we write $(Z\times \{0\},W\times
\A^1,\phi,t;g;f)$ for the latter framed correspondence.

\begin{definition}\label{paraqf}{\rm
Let $\Fr^{qf}_m(U,Y/(Y-S)\wedge T)\subset \Fr_m(U,Y/(Y-S)\wedge T)$
be the subset consisting of those elements $c$ for which there is an
explicit framed correspondence $(Z,W,\phi_1,\ldots,\phi_{m};g;f)$
representing $c$ such that the closed subset
$Z(\phi_1,\ldots,\phi_{m})\cap g^{-1}(S)\subset W$ is
quasi-finite over $U$.
}\end{definition}

\begin{example}{\rm
Here are some simple examples of non quasi-finite framed correspondences\label{ZF_qf_is_not_ZF}:

$\bullet$ \ $(\{0\},\A^m,t_1,t_1,...,t_{m-1},\A^m\to \Spec (k);t_m)\in \Fr_m(\Spec (k),T)\setminus \Fr^{qf}_m(\Spec (k),T)$;

$\bullet$ \ $(\{0\},\A^m,t_1,t_2,t_2,..,t_{m-1},\A^m\to \Spec (k);t_m)\in \Fr_m(\Spec (k),T)\setminus \Fr^{qf}_m(\Spec (k),T)$;

$\bullet$ \ $(\{0\},\A^m,t_1,t_2,...,t_{m-1},t_{m-1},\A^m\to \Spec (k);t_m)\in \Fr_m(\Spec (k),T)\setminus \Fr^{qf}_m(\Spec (k),T)$.

More generally, let $\phi_1,\ldots,\phi_{m-1}\in k[\A^m]=k[t_1,...,t_m]$ be polynomials such that their common zero locus
in $\A^m$ is an equidimensional closed subset of dimension 1. Let $f\in k[t_1,...,t_m]$ be such that
the common zero locus $Z$ of the functions $\phi_1,\ldots,\phi_{m-1},f$ has dimension zero. Then

$\bullet$ \ $(Z,\A^m,\phi_1,\ldots,\phi_{m-1},\A^m\to \Spec (k);f)\in \Fr_m(\Spec (k),T)\setminus \Fr^{qf}_m(\Spec (k),T)$.
}\end{example}

\begin{definition}\label{def:Fr_qf}{\rm
Set $\Fr^{qf}_*(U,Y/(Y-S)\wedge T):=\bigsqcup_{m\geq
0}\Fr^{qf}_m(U,Y/(Y-S)\wedge T)$. Clearly,
$\Fr^{qf}_*(-,Y/(Y-S)\wedge T)$ is a framed subpresheaf of the
framed presheaf $\Fr_*(-,Y/(Y-S)\wedge T)$. Also, it is clear that
the suspension $\Sigma$ takes $\Fr^{qf}_m(U,Y/(Y-S)\wedge T)$ to
$\Fr^{qf}_{m+1}(U,Y/(Y-S)\wedge T)$. Set,
   $$\Fr^{qf}(-,Y/(Y-S)\wedge T):=\colim(\Fr^{qf}_0(-,Y/(Y-S)\wedge T)\xrightarrow{\Sigma}\Fr^{qf}_1(-,Y/(Y-S)\wedge T)\xrightarrow{\Sigma}\dots).$$
By the very construction, $\Fr^{qf}(-,Y/(Y-S)\wedge T)$ is a pointed
framed subpresheaf of the pointed framed presheaf
$\Fr(-,Y/(Y-S)\wedge T)$.
}
\end{definition}

Let $(Z,W,\varphi;g;f)\in \Fr_m(U,Y/(Y-S))$ be such that $Z=Z_1\sqcup Z_2$. Then
$(Z,W,\varphi;g;f)\in \Fr^{qf}_m(U,Y/(Y-S))$ if and only if for $i,j=1,2$ and $j\neq i$ one has
   $$(Z_i, W\setminus Z_j,\varphi|_{W\setminus Z_j};g|_{W\setminus Z_j};f|_{W\setminus Z_j})\in \Fr^{qf}_m(U,Y/(Y-S)).$$

This observation leads to the following

\begin{definition}\label{def:ZF_m_qf}{\rm
Let $Y\in Sm/k$ and $S\subset Y$ be as in
Definition~\ref{def:FrY/Y-S}. Let $U$ be a $k$-smooth scheme. Set,
   $$\ZF^{qf}_m(U,Y/(Y-S)\wedge T):=\bb Z[\Fr^{qf}_m(U,Y/(Y-S)\wedge T)]/A,$$
where $A$ is the subgroup generated by the elements
$$(Z\sqcup Z', W,\varphi;g;f)
-(Z, W\setminus Z',\varphi|_{W\setminus Z'};g|_{W\setminus
Z'};f|_{W\setminus Z'}) - (Z',{W\setminus Z},\varphi|_{W\setminus
Z};g|_{W\setminus Z};g|_{W\setminus Z}).$$

}\end{definition}

Set $\ZF^{qf}_*(U,Y/(Y-S)\wedge T)=\bigoplus_{m\geq
0}\ZF^{qf}_m(U,Y/(Y-S)\wedge T)$. The pairing~\eqref{eq:linear_comp}
gives rise to a natural pairing $\ZF_k(V,U)\times
\ZF^{qf}_m(U,Y/(Y-S))\to \ZF^{qf}_{k+m}(U,Y/(Y-S))$. The latter
pairing makes $\ZF^{qf}_*(-,Y/(Y-S))$ a linear framed presheaf.

The external product~\eqref{eq:linear_ext_product} gives rise to an
external product
\begin{equation*}\label{potom}
\ZF^{qf}_m(U,Y/(Y-S))\wedge T)\times \ZF_n(\pt,\pt) \xrightarrow{-\boxtimes -}
\ZF^{qf}_{m+n}(U,Y/(Y-S)\wedge T).
\end{equation*}

\begin{definition}\label{def:ZF_qf}{\rm
Set,
   $$\ZF^{qf}(-,Y/(Y-S)\wedge T)=\colim(\ZF^{qf}_0(-,Y/(Y-S)\wedge T)\xrightarrow{\Sigma}\ZF^{qf}_1(-,Y/(Y-S)\wedge T)\xrightarrow{\Sigma}\dots).$$
By the very construction $\ZF^{qf}(-,Y/(Y-S)\wedge T)$ is a linear
framed presheaf.

}\end{definition}

\begin{definition}\label{def:F_m_and_F_m_qf}{\rm
For $U\in Sm/k$ let $\F_m(U,Y/(Y-S)\wedge T)$ be the subset of $\Fr_m(U,Y/(Y-S)\wedge T)$ consisting of the elements
$$(Z,W,\phi_1,\ldots,\phi_{m};g;f) \in \Fr_m(U,Y/(Y-S)\wedge T)$$
such that $Z$ is connected. Clearly, this definition is consistent with Definition~\ref{F_m_U_Y/(Y-S)}. Set,
   $$\F^{qf}_m(U,Y/(Y-S)\wedge T)=\F_m(U,Y/(Y-S)\wedge T)\cap \Fr^{qf}_m(U,Y/(Y-S)\wedge T).$$
$\F^{qf}_m(U,Y/(Y-S)\wedge T)\setminus {*}$ is plainly a free basis of the free abelian group
$\ZF^{qf}_m(U,Y/(Y-S)\wedge T)$.

}\end{definition}

The following lemma immediately follows from definitions.

\begin{lemma}\label{l:ZF_qf_m_and_ZF_m}
$(1)$ The set $(\F^{qf}_m(U,Y/(Y-S)\wedge T)\setminus{*})$ is a free
basis of $\ZF^{qf}_m(U,Y/(Y-S)\wedge T)$.

$(2)$ The natural map $\ZF^{qf}_m(U,Y/(Y-S)\wedge T) \to
\ZF_m(U,Y/(Y-S)\wedge T)$ is injective and identifies
$\ZF^{qf}_m(U,Y/(Y-S)\wedge T)$ with a direct summand of
$\ZF_m(U,Y/(Y-S)\wedge T)$. Therefore $\ZF^{qf}(-,Y/(Y-S)\wedge T)$
is a framed subpresheaf of the framed presheaf $\ZF(-,Y/(Y-S)\wedge T)$.
\end{lemma}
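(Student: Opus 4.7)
The plan is to deduce both parts from the observation recorded just before Definition~\ref{def:ZF_m_qf}: for any $\Phi=(Z,W,\varphi;g;f)\in\Fr_m^{qf}(U,Y/(Y-S)\wedge T)$ with $Z=Z_1\sqcup Z_2$, the two restrictions
\[
\Phi_i=(Z_i,\,W\setminus Z_j,\,\varphi|_{W\setminus Z_j};\,g|_{W\setminus Z_j};\,f|_{W\setminus Z_j}),\qquad\{i,j\}=\{1,2\},
\]
again lie in $\Fr_m^{qf}$. Consequently every defining relation $\Phi-\Phi_1-\Phi_2$ of the subgroup $A$ in Definition~\ref{def:ZF_m_qf} already makes sense inside $\bb Z[\Fr_m^{qf}]$, and this is the only geometric input required.

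For part~(1), I would establish spanning by induction on the number $r$ of connected components of the support $Z$ of a representative $\Phi\in\Fr_m^{qf}$. If $r=1$ then $\Phi\in\F_m^{qf}$. If $r\geq 2$, pick a connected component $Z_1\subset Z$ and set $Z_2=Z\setminus Z_1$; the observation guarantees that $\Phi,\Phi_1,\Phi_2\in\Fr_m^{qf}$, so modulo $A$ the class of $\Phi$ becomes $[\Phi_1]+[\Phi_2]$ with $\Phi_1\in\F_m^{qf}$ and $\Phi_2$ supported on strictly fewer components, completing the induction. For linear independence, suppose $\sum_i n_i[c_i]=0$ in $\ZF_m^{qf}$ with $c_i$ distinct in $\F_m^{qf}\setminus\{*\}$. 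The inclusion $\Fr_m^{qf}\hookrightarrow\Fr_m$ carries $A$ into the defining subgroup of $\ZF_m$ (note that $0_m$ lies in $A$ as the degenerate case $Z'=\emptyset$ of the relation), hence the same linear combination vanishes in $\ZF_m$. Since $\F_m\setminus\{*\}$ is a free basis of $\ZF_m$ by Definition~\ref{def:F_m_and_F_m_qf} and $\F_m^{qf}\setminus\{*\}\subset\F_m\setminus\{*\}$, all $n_i$ vanish.

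Part~(2) then becomes formal: the natural map $\ZF_m^{qf}\to\ZF_m$ sends the free basis $\F_m^{qf}\setminus\{*\}$ injectively into the free basis $\F_m\setminus\{*\}$, hence is a split monomorphism with complement the free abelian group on $\F_m\setminus\F_m^{qf}$. Taking the colimit along $\Sigma$ preserves this inclusion and splitting, because $\Sigma$ preserves both connectedness of the support and the quasi-finiteness condition on $Z(\varphi)\cap g^{-1}(S)$ over $U$ by construction. The framed subpresheaf claim follows because the composition pairing~\eqref{eq:linear_comp} on $\ZF^{qf}$ is by design the restriction of the pairing on $\ZF$.

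The only substantive step is the observation cited at the outset, namely that quasi-finiteness of $Z(\varphi)\cap g^{-1}(S)$ over $U$ is preserved when one restricts to the open neighborhood $W\setminus Z_j\subset W$ of a clopen component $Z_i$ of $Z$. This is immediate since quasi-finiteness is Zariski-local on the source and $Z_i$ is both open and closed in $Z$; this is precisely why the authors label the lemma as obvious.
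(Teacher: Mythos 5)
Your proof is correct, and since the paper labels this lemma as ``obvious'' and offers no written argument, your task was to supply the natural fill\mbox{-}in, which is exactly what you did: spanning by induction on the number of connected components of the support, using the relations in $A$ together with the observation that the quasi-finiteness condition is stable under restriction to the clopen pieces $W\setminus Z_j$; linear independence by pushing a vanishing combination forward to $\ZF_m$, where $\F_m\setminus\{*\}$ is a known free basis; and part~(2) as a formal consequence of comparing the two free bases, together with the facts (already recorded in the paper) that $\Sigma$ and the pairing~\eqref{eq:linear_comp} preserve the $qf$-subobjects. One small point of precision: quasi-finiteness in the sense of ``finite type with finite fibers'' is not literally Zariski-local on the source (the gluing direction fails because quasi-compactness is not local), but the direction you actually use --- that the restriction of a quasi-finite $U$-scheme to an open subscheme is again quasi-finite --- is immediate, so the argument stands.
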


\begin{remark}{\rm
However, the assignment $U\mapsto \F^{qf}_m(U,Y/(Y-S)\wedge T)$
is not a presheaf even on the category $Sm/k$.
Indeed, if $u\xrightarrow{i} U$ is a closed point and
$$(Z,W,\phi;g)\in \F^{qf}_m(U,Y/(Y-S)\wedge T)\subseteq \Fr^{qf}_m(U,Y/(Y-S)\wedge T),$$
then the support of $i^*(Z,W,\phi;g)\in \Fr^{qf}_m(u,Y/(Y-S)\wedge T)$ is the closed subset $Z_u$ in $\A^m_u$,
but $Z_u$ is often not connected.
}\end{remark}


\section{Geometric lemmas}\label{sec:geom lemmas}

In this section we prove a couple of geometric lemmas used in the
following sections. All schemes in this section are supposed to be affine and noetherian.
If $X$ is an affine noetherian scheme and $Z\subset X$ is a closed subset then
by~\cite[6.9]{G}
the henselization $X^h_Z$ of $X$ at $Z$ is an affine noetherian scheme.
We start with a useful remark.

\begin{remark}\label{r:henselian_properties}{\rm
Let $W$ be a reduced local scheme
with a closed point $w$. Let
$S\subset W$ be a closed subset. Then $S$ with the reduced scheme
structure is a local scheme which is connected and $w$ is the only
closed point of $S$.

Furthermore, let $U$ be a reduced henselian local scheme
with the
closed point $u$.
Let $S\subset W$ be a closed subset.
Let $\pi: W\to U$ be a morphism such that
$\pi(w)=u$ and let $\pi|_S: S\to U$ be quasi-finite.
By~\cite[Theorem I.4.2]{Mi} $S$ is finite over $U$. Thus $S$ is reduced
henselian local.

Let $S$ be a henselian local scheme and let $Z$ be a closed
subset of $S$. Suppose $S^h_Z$ is the henselization of $S$ at $Z$.
Then the
canonical morphism $can_{S,Z}:S^h_Z \to S$ is an isomorphism.
Indeed, $(S,id_S,i:Z\hookrightarrow S)$ is the initial object in the
category of \'{e}tale neighborhoods of $Z$ in $S$.
Since $can_{S,Z}:S^h_Z \to S$ is an isomorphism, the scheme
$S^h_Z$ is noetherian henselian local.
}\end{remark}

\begin{lemma}\label{l:closed-embedding}
Let $V$ be an affine scheme, $Z\subset V$ be a closed connected
subset, $can=can_{V,Z}:V^h_Z \to V$ be the henselization of $V$ at
$Z$, and let $s:Z\to V^h_Z$ be the section of $can$ over $Z$. Let
$U$ be a regular local henselian scheme, $q:V\to U$ be a smooth
morphism such that the morphism $q|_Z:Z\to U$ is finite.
Furthermore, suppose $Y\subset V^h_Z$ is a closed subset containing
$s(Z)$, which is quasi-finite over $U$. Then $can|_Y: Y\to V$ is a
closed embedding, $can(Y)$ contains $Z$ and $can^{-1}(can(Y))=Y$.
\end{lemma}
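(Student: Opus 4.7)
The strategy is to identify $Y$ with its own scheme-theoretic image $Y'':=can(Y)\subset V$, exploiting the crucial fact that $Y''$, being finite over the henselian local base $U$, is already henselian along $Z$, so that $can^{-1}(Y'')=Y''$ inside $V^h_Z$. To set this up, one first verifies the structural reductions. By Remark \ref{r:henselian_properties}, $Z$ is local henselian with closed point $z$. Writing $V=\Spec A$ and $Z=V(I)$, the quotient $A^h_I/IA^h_I=A/I$ is local and $IA^h_I\subset\operatorname{Jac}(A^h_I)$, so $V^h_Z=\Spec A^h_I$ is local henselian with closed point $w=s(z)$. Applying Remark \ref{r:henselian_properties} to the closed set $Y\subset V^h_Z$ and to the morphism $V^h_Z\to U$ (which sends $w$ to $u$), one concludes that $Y$ is local connected with closed point $w$, and that the quasi-finite map $Y\to U$ is in fact finite by the cited Theorem I.4.2 of Milne; hence $Y$ itself is local henselian. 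Combining $Y\to U$ finite with $V\to U$ separated shows by the usual cancellation that $can|_Y:Y\to V$ is finite.

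Next one introduces $Y'':=can(Y)\hookrightarrow V$ as the scheme-theoretic image, a closed subscheme through which $can|_Y$ factors as $Y\twoheadrightarrow Y''\hookrightarrow V$ with the first arrow scheme-theoretically surjective. The induced ring inclusion $\mathcal O(Y'')\hookrightarrow \mathcal O(Y)$ realises $\mathcal O(Y'')$ as a subalgebra of the finite $\mathcal O(U)$-module $\mathcal O(Y)$; by noetherianity $\mathcal O(Y'')$ is itself finite over $\mathcal O(U)$, so $Y''$ is finite over $U$, and connected (image of $Y$), hence local henselian. From $can\circ s=\id_Z$ one obtains $Z=can(s(Z))\subset Y''$, which already proves assertion (b).

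The crux is the identity $can^{-1}(Y'')=Y''\times_V V^h_Z\cong Y''$ inside $V^h_Z$. Setting $B=\mathcal O(Y'')$ and $J=\ker(A\twoheadrightarrow B)\subset I$, this amounts to $A^h_I/JA^h_I=B$. The pair $(A^h_I/JA^h_I,IA^h_I/JA^h_I)$ is henselian, being a quotient of the henselian pair $(A^h_I,IA^h_I)$, and it receives a canonical map from the henselisation $B^h_{I/J}$ of $(B,I/J)$; but $B$ is local henselian and $I/J\subset \mathfrak m_B$, so the pair $(B,I/J)$ is itself already henselian and $B^h_{I/J}=B$. A universal property check identifies both sides and yields $A^h_I/JA^h_I=B$. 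Equivalently, the second projection $Y''\times_V V^h_Z\to V^h_Z$ realises $Y''$ as a closed subscheme $\widetilde Y''\subset V^h_Z$ equal to $can^{-1}(Y'')$, on which $can$ restricts to the original closed immersion $Y''\hookrightarrow V$.

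To finish, the factorisation of $can|_Y$ through $Y''$ together with the universal property of the fibre product allows $Y\hookrightarrow V^h_Z$ to factor through $can^{-1}(Y'')=\widetilde Y''\cong Y''$, so $Y\hookrightarrow Y''$ is a closed immersion corresponding to a ring surjection $\mathcal O(Y'')\twoheadrightarrow \mathcal O(Y)$. But the very same ring map is injective by construction of the scheme-theoretic image $Y''$, hence an isomorphism, so $Y=Y''$ as closed subschemes of $V^h_Z$. Assertion (a) then reads as $can|_Y$ being the closed immersion $Y=Y''\hookrightarrow V$, and assertion (c) reads $can^{-1}(can(Y))=\widetilde Y''=Y$. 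The main obstacle I anticipate lies in the third paragraph: one must verify carefully that a local henselian ring is henselian along every ideal contained in its maximal ideal, and that henselisation commutes with base change along a closed immersion --- both standard, but requiring the right setup before the rest of the proof becomes bookkeeping.
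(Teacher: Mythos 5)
Your proposal is correct and follows essentially the same strategy as the paper: first show that $Y$ is local, connected and finite over $U$ (hence henselian), deduce the same for $Y_1:=can(Y)$, identify $can^{-1}(Y_1)$ with the henselization of $Y_1$ along $Z$ and observe that this is an isomorphism onto $Y_1$ because $Y_1$ is already henselian, and finally compare $Y$ with $can^{-1}(Y_1)$ by a surjectivity/injectivity check on rings. The only cosmetic difference is that the paper works with reduced closed subschemes throughout and obtains the injectivity from reducedness plus surjectivity of $p_1\circ i$, whereas you use the scheme-theoretic image to get injectivity for free; both routes close the same gap, and the "henselization commutes with closed immersion" fact you flag at the end is packaged in the paper's Remark~\ref{r:henselian_properties}.
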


\begin{proof}
Since $Z$ is finite over the local henselian $U$ and $Z$ is
connected, it is local and henselian. Since $Z$ is local, then so is
the scheme $W=V^h_Z$. By Remark~\ref{r:henselian_properties} the
scheme $Y$ with the reduced scheme structure
is local, connected and finite over $U$. Hence the set
$Y_1=can(Y)$ is closed in $V$ and finite over $U$ and contains $Z$.
So $Y_1$ with the reduced scheme structure is a local henselian scheme.

We claim that there is a canonical isomorphism of schemes between the henselization
$(Y_1)^h_Z$ of $Y_1$ at $Z$
and the scheme
$can^{-1}(Y_1)$. This claim is a partial case of \cite[Lemma 5.8]{FP}.
In order to apply that lemma,
our scheme $V$ is replaced by $S$, our scheme $Y_1$ is replaced by $S'$,
and our local scheme $Z$ is replaced by $T$.
Within this notation the scheme $T\times_S S'$ from that lemma is just our scheme $Z$.
Also, in this case the canonical scheme isomorphism
$(T\times_S S')^h\to T^h\times_S S'$ from that lemma
is the isomorphism of schemes
$(Y_1)^h_Z\to can^{-1}(Y_1)$.
Hence the pair $(can^{-1}(Y_1), s(Z))$ is the henselization
$(Y_1)^h_Z$ of $Y_1$ at $Z$.
Thus, by Remark~\ref{r:henselian_properties} the morphism
$p_1=can|_{can^{-1}(Y_1)}:can^{-1}(Y_1)\to Y_1$ is an isomorphism of schemes.
Since the scheme $Y_1$ is reduced, so is the scheme $can^{-1}(Y_1)$.

Clearly, $Y$ regarded with the reduced scheme structure is a closed subscheme
of the scheme $can^{-1}(Y_1)$. Let $i:Y \to
can^{-1}(Y_1)$ be the inclusion. Then the ring map $(p_1\circ i)^*:
\Gamma(Y_1,\mathcal O_{Y_1})\to \Gamma(Y,\mathcal O_{Y})$ is
surjective. On the other hand, this ring morphism is injective,
because both schemes are reduced affine and the morphism $p_1\circ
i$ is surjective. Hence $p_1\circ i:Y\to Y_1$ is an isomorphism of schemes.
Thus $can|_Y:Y\to V$ is a closed embedding.
\end{proof}

\begin{lemma}\label{l:lift}
Under the assumptions of Lemma~\ref{l:closed-embedding} supoose
$Y\subset V$ is a closed connected subset containing $Z$ and is
finite over $U$. Then there is a unique section $t:Y\to V^h_Z$ of
the morphism $can:V^h_Z\to V$ and $t(Y)=can^{-1}(Y)$ contains $s(Z)$.
\end{lemma}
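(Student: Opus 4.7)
\medskip

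\noindent\textbf{Proof plan for Lemma \ref{l:lift}.}

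The plan is to identify the preimage $can^{-1}(Y)$ with the henselization of $Y$ at $Z$ and then to apply the last part of Remark~\ref{r:henselian_properties} to conclude that this henselization is already $Y$ itself; the section $t$ will then be forced to be the inverse of $can|_{can^{-1}(Y)}$.

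First I would record that $Y$ is a local henselian scheme. Indeed, $Y$ is finite over the local henselian $U$ and connected, so by the same argument used in the second paragraph of Remark~\ref{r:henselian_properties} (finite schemes over a local henselian base split into a finite product of local henselian schemes, and connectedness kills all but one factor), $Y$ is local henselian. Moreover $Z\subset Y$ is closed, and both have a unique closed point lying over $u\in U$, so $Z$ meets every component of $Y$ (there is only one).

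Next I would argue that $can^{-1}(Y)=V^h_Z\times_V Y$ is canonically the henselization $Y^h_Z$ of $Y$ at $Z$. This is the standard fact that henselization at a closed subscheme is compatible with passage to a closed subscheme containing it: the pro-\'etale system of \'etale neighborhoods of $Z$ in $V$ pulls back to the pro-\'etale system of \'etale neighborhoods of $Z$ in $Y$. Since $Y$ is itself local henselian and $Z\subset Y$ is closed, the third paragraph of Remark~\ref{r:henselian_properties} yields that the canonical morphism $can_{Y,Z}:Y^h_Z\to Y$ is an isomorphism. Composing, the restriction
\[
p:=can|_{can^{-1}(Y)}:can^{-1}(Y)\longrightarrow Y
\]
is an isomorphism. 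Define $t:=p^{-1}:Y\to V^h_Z$; by construction $t$ is a section of $can$ over $Y$ and $t(Y)=can^{-1}(Y)$.

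For uniqueness, any section $t':Y\to V^h_Z$ of $can$ over $Y$ takes values in $can^{-1}(Y)$, and once there it must coincide with $p^{-1}$ since $p$ is an isomorphism. Finally, for the inclusion $s(Z)\subset t(Y)$: the canonical section $s:Z\to V^h_Z$ satisfies $can\circ s=\id_Z$, hence $s(Z)\subset can^{-1}(Z)\subset can^{-1}(Y)=t(Y)$.

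The only genuinely non-formal point is the identification $can^{-1}(Y)=Y^h_Z$, i.e.\ the base change property of henselization along a closed immersion $Y\hookrightarrow V$ containing $Z$. Everything else is a direct application of Remark~\ref{r:henselian_properties} together with the fact that a connected finite scheme over a local henselian base is local henselian.
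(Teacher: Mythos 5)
Your proposal is correct and follows essentially the same route as the paper's own proof: identify $can^{-1}(Y)$ with the henselization $Y^h_Z$, invoke Remark~\ref{r:henselian_properties} to see that $can|_{can^{-1}(Y)}$ is an isomorphism, and take $t$ to be its inverse composed with the inclusion. You supply slightly more detail on uniqueness and on $s(Z)\subset t(Y)$, but the key steps match the paper's argument.
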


\begin{proof}
Consider the closed subset $Y\subset V$ with the reduced scheme structure.
Since $U$ is local henselian, then so is the scheme $Y$.
Using~\cite[Lemma 5.8]{FP}
similarly to the proof of Lemma~\ref{l:closed-embedding},
we conclude that the scheme
$can^{-1}(Y)$ is the henselization of $Y$ at $Z$.
By Remark~\ref{r:henselian_properties} the morphism
$p=can|_{can^{-1}(Y)}:can^{-1}(Y)\to Y$ is
an isomorphism of schemes. Set
$t=in\circ p^{-1}:Y\to V^h_Z$,
where $in: can^{-1}(Y)\hookrightarrow V^h_Z$ is the closed embedding.
Clearly, $t$ is a section of $can$
over $Y$. If $t':Y\to V^h_Z$ is another section of $can$, then $t'=t$.
\end{proof}

\section{The linear framed presheaf $\ZF^{qf}(X\times T^{n+1})$}\label{s:Fr(X_T_n}

Let $X$ be a $k$-smooth variety and let $Y =X\times\A^n$, $S=X\times\{0\}$. Following Notations~\ref{para},~\ref{rem: some equalities}
and Definitions~\ref{paraqf},~\ref{def:ZF_m_qf},~\ref{def:ZF_qf}, we shall write $\Fr^{qf}(-,X\times T^{n+1})$ and
$\ZF^{qf}(-,X\times T^{n+1})$ to denote the presheaves $\Fr^{qf}(-,(Y/(Y-S))\wedge T)$ and
$\ZF^{qf}(-,(Y/(Y-S))\wedge T)$ respectively. We also have the following equalities:
   $$\ZF(-,X\times T^{n+1})=\ZF(-,(Y/(Y-S))\wedge T) \quad \text{and} \quad \Fr(-,X\times T^{n+1})=\Fr(-,(Y/(Y-S))\wedge T).$$
We also have equalities of framed presheaves as follows:
\begin{itemize}
\item[$\diamond$] $\Fr^{qf}_*(-,X\times T^{n+1})=\Fr^{qf}_*(-,(Y/(Y-S))\wedge T)$,
\item[$\diamond$] $\Fr_*(-,X\times T^{n+1})=\Fr_*(-,(Y/(Y-S))\wedge T)$,
\item[$\diamond$] $\ZF^{qf}_*(-,X\times T^{n+1})=\ZF^{qf}_*(-,(Y/(Y-S))\wedge T)$,
\item[$\diamond$] $\ZF_*(-,X\times T^{n+1})=\ZF_*(-,(Y/(Y-S))\wedge T)$,
\end{itemize}
Finally, we have equalities of pointed sets:
\begin{itemize}
\item[$\diamond$] $\F_m(U,X\times T^{n+1})=\F_m(U,(Y/(Y-S))\wedge T)$,
\item[$\diamond$] $\F^{qf}_m(U,X\times T^{n+1})=\F^{qf}_m(U,(Y/(Y-S))\wedge T)$.
\end{itemize}
Specifying definitions of Section~\ref{sec:Fr_qf_and_ZF_qf}, a
section of $\Fr_m(-,(X\times T^{n+1}))$ on $U\in Sm/k$ is a tuple
   $$c=(Z,W,\phi_1,\ldots,\phi_{m};h:W\to X\times \A^n;f:W\to \A^1),$$
where $Z$ is a closed subset of $U\times\bb A^m$ finite over $U$,
$W$ is an \'{e}tale neighborhood of $Z$ in $\A^m\times U$,
$\phi_1,\ldots,\phi_{m},f$ are regular functions on $W$,
$h=(g,\phi_{m+1},\ldots,\phi_{m+n}):W\to X\times \A^n$ is a regular
map such that
   $$Z=Z(\phi_1,\ldots,\phi_{m},f)\cap h^{-1}(X\times \{0\})=Z(\phi_1,\ldots,\phi_{m},f,\phi_{m+1},\ldots,\phi_{m+n}).$$
The section $c$ is in $\Fr^{qf}_m(U,X\times T^{n+1})$ if and only
if the vanishing locus
$Z(\phi_1,\ldots,\phi_{m},\phi_{m+1},\ldots,\phi_{m+n})$ is
quasi-finite over $U$.

The section $c$ is in $\F_m(U,X\times T^{n+1})$ if and only if the
set $Z$ is connected. The section $c$ is in
$\F^{qf}_m(U,X\times T^{n+1})$ if and only if the set $Z$ is
connected and the vanishing locus
$Z(\phi_1,\ldots,\phi_{m},\phi_{m+1},\ldots,\phi_{m+n})$ is
quasi-finite over $U$.

The suspension map $\Sigma: \Fr_m(-,X\times T^{n+1}) \to
\Fr_{m+1}(-,X\times T^{n+1})$ sends
$(Z,W,\phi_1,\ldots,\phi_{m};g;\phi_{m+1},\ldots,\phi_{m+n};f)$ to
$(Z\times \{0\},W\times
\A^1,\phi_1,\ldots,\phi_{m},t;g;\phi_{m+1},\ldots,\phi_{m+n};f)$.

\begin{notation}{\rm
For the convenience of computations we shall write
$(Z,W,\phi_1,\ldots,\phi_{m},\phi_{m+1},\ldots,\phi_{m+n};f:W\to
\A^1,g:W\to X)$ for $(Z,W,\phi_1,\ldots,\phi_{m};g:W\to
X;\phi_{m+1},\ldots,\phi_{m+n};f:W\to \A^1)$ in the rest of the
paper.

}\end{notation}

Let $m\geq 0$ be an integer. Using Construction \ref{Pairs_to_Shv} the canonical morphism $(\A^1,\emptyset)\to(\A^1,\bb G_m)$
in $SmOp(\Fr_0(k)$ induces a morphism in $Shv_\bullet(Sm/k))$:
   $$p:\Fr_m(-,X\times T^{n}\times \A^1) \to \Fr_m(-,X\times T^{n+1})$$
sending
$(Z,W,\phi_1,\ldots,\phi_{m},\phi_{m+1},\ldots,\phi_{m+n};f;g)$ to
$(Z',W,\phi_1,\ldots,\phi_{m},\phi_{m+1},\ldots,\phi_{m+n},\phi_{m+n+1};g),$
where $Z'=Z\cap Z(f)$, $\phi_{m+n+1}=f$.

\begin{lemma}\label{l:pushout}
If $U$ is essentially $k$-smooth local henselian, then the image of
the map $p:\Fr_m(U,X\times T^{n}\times \A^1) \to
\Fr_m(U,X\times T^{n+1})$ is contained in $\Fr^{qf}_m(U,X\times
T^{n+1})$. Moreover, for any $m\geq 0$ the map $p$ sends
$\F_m(U,X\times T^{n}\times \A^1)$ to the set
$\F^{qf}_m(U,X\times T^{n+1})$. Finally, the square of pointed
sets
$$\xymatrix{\F_m(U,X\times T^{n}\times \Gm) \ar[r]^{i}\ar[d]_{}& \F_m(U,X\times T^{n}\times \A^1) \ar^{p}[d]\\
               {*} \ar[r]^{}&  \F^{qf}_m(U,X\times T^{n+1}), }$$
is a pushout square. Here ${*} \in \F_m(U,X\times T^{n+1})$ is the
empty framed correspondence.
\end{lemma}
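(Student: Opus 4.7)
The plan is to unravel the definitions on representatives and then organise the pushout claim into the four standard criteria for a pointed-set pushout, using henselianness of $U$ to control supports at every step. Start from a representative $c = (Z, W, \phi_1, \ldots, \phi_{m+n}; f; g)$ of $\Fr_m(U, X_+ \wedge T^n \wedge \A^1_+)$: by definition $Z = Z(\phi_1, \ldots, \phi_{m+n})$ is finite over $U$. Its image $p(c)$ has intermediate vanishing locus equal to $Z$, hence finite and in particular quasi-finite, so $p(c) \in \Fr^{qf}_m$. If additionally $Z$ is connected, then Remark~\ref{r:henselian_properties} makes $Z$ local henselian, so any non-empty closed subset of $Z$ contains the unique closed point of $Z$ and is again local, hence connected. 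Applied to $Z' = Z \cap Z(f)$, this gives either $Z' = \emptyset$ (so $p(c) = \ast$) or $Z'$ connected, and in both cases $p(c) \in \F^{qf}_m$. This establishes the first two assertions.

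For the pushout, the pushout of pointed sets $\ast \leftarrow A \xrightarrow{i} B$ is $B$ with $i(A)$ collapsed to $\ast$; the claim reduces to: (a) $p \circ i = \ast$; (b) $p$ is surjective; (c) $p^{-1}(\ast) \subseteq i(A)$; and (d) $p$ is injective on $B \setminus i(A)$. Property (a) is immediate since $f \colon W \to \Gm$ forces $Z(f) = \emptyset$. For (c), if $p(c) = \ast$ with $c$ not the basepoint then $Z \cap Z(f) = \emptyset$, so $W \setminus Z(f)$ is still an \'{e}tale neighbourhood of $Z$ in $U \times \A^m$ on which $f$ is invertible, exhibiting $c$ as an element of $i(\F_m(U, X_+ \wedge T^n \wedge \Gm_+))$.

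The main work is (b). Given $c' = (Z'', W', \phi_1, \ldots, \phi_{m+n+1}; g) \in \F^{qf}_m$, the closed subset $V = Z(\phi_1, \ldots, \phi_{m+n}) \subseteq W'$ is quasi-finite over $U$, while $Z''$ is local henselian with unique closed point $z$ above the closed point of $U$. By the structure theorem for quasi-finite separated maps over a henselian local base, $V$ admits a clopen decomposition $V = V_0 \sqcup V_1$ in which $V_0$ is the henselian local component at $z$: it is finite over $U$, local henselian (hence connected), and contains $Z''$. Choose a Zariski open $W \subseteq W'$ with $W \cap V = V_0$; then $(V_0, W, \phi_1|_W, \ldots, \phi_{m+n}|_W; \phi_{m+n+1}|_W; g|_W)$ lies in $\F_m(U, X_+ \wedge T^n \wedge \A^1_+)$ and maps under $p$ to $c'$, since $V_0 \cap Z(\phi_{m+n+1}) = Z'' \cap V_0 = Z''$.

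Finally, for (d), suppose $p(c_1) = p(c_2) \neq \ast$ with $c_i = (Z_i, W_i, \phi^{(i)}; f^{(i)}; g^{(i)})$. Then $Z_1 \cap Z(f^{(1)}) = Z_2 \cap Z(f^{(2)}) = Z''$, and on some \'{e}tale neighbourhood $V'$ of $Z''$ in $W_1 \times_{U \times \A^m} W_2$ all framing data agree. Both $Z_i$ are local henselian with the same closed point $z \in Z''$. Agreement of $\phi^{(1)}$ with $\phi^{(2)}$ on $V'$ gives $\mathrm{pr}_1^{-1}(Z_1) = \mathrm{pr}_2^{-1}(Z_2)$ in $V'$; after shrinking each $W_i$ so that the preimage of $Z_i$ under $W_i \to U \times \A^m$ is precisely $Z_i$, this becomes $\pi^{-1}(Z_1) = \pi^{-1}(Z_2)$, where $\pi \colon V' \to U \times \A^m$ is the structure map. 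Descending along the surjection of $V'$ onto the open $\Omega = \pi(V') \subseteq U \times \A^m$ yields $Z_1 \cap \Omega = Z_2 \cap \Omega$; since $\Omega$ is open and contains the closed point $z$ of each local scheme $Z_i$, we have $Z_i \cap \Omega = Z_i$, forcing $Z_1 = Z_2$. The remaining framing data automatically agree on $V'$, giving $c_1 = c_2$. The principal obstacle is step (b), where extracting the finite clopen component $V_0$ from the quasi-finite $V$ is exactly what the \emph{qf} hypothesis and the henselianness of $U$ were engineered to make possible.
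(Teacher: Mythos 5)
Your reduction of the pushout statement to the four conditions (a)--(d) is correct, and your surjectivity argument (b) is a valid alternative to the paper's route: where the paper henselizes $\A^m\times U$ at $Z''$ and invokes Lemmas~\ref{l:closed-embedding} and~\ref{l:lift} to descend the vanishing locus $Y=Z(\phi_1,\dots,\phi_{m+n})$ to a closed subscheme of $\A^m\times U$, you instead apply the structure theorem for quasi-finite separated maps over a henselian local base to split off the finite clopen component $V_0$ of $V$ through the closed point and then shrink $W'$. Both approaches work; yours is arguably more transparent and avoids a trip through pro-\'etale objects.

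There is, however, a genuine gap in step (d). Having deduced $Z_1=Z_2=:Y$, you conclude ``the remaining framing data automatically agree on $V'$, giving $c_1=c_2$.'' But equality of $c_1$ and $c_2$ as elements of $\F_m(U,X_+\wedge T^n\wedge\A^1_+)$ requires the framing data to agree on an \'etale neighbourhood \emph{of $Y$} in $W_1\times_{U\times\A^m}W_2$, whereas $V'$ was only produced as an \'etale neighbourhood of the smaller closed set $Z''=Y\cap Z(f)$. You have a section $Z''\to V'$ but have not produced a section $Y\to V'$ compatible with $(s_1,s_2):Y\to W_1\times_{U\times\A^m}W_2$. Filling this requires one more henselian argument: pull $V'$ back along $s_Y(Y)\hookrightarrow W_1\times_{U\times\A^m}W_2$ to get an \'etale map to the henselian local scheme $Y$ equipped with a section over the closed point, and conclude that the section extends uniquely to all of $Y$. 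This is precisely the content of the paper's Lemma~\ref{l:lift}, which the paper uses at this point and which you have neither cited nor re-proved; without it, the assertion $c_1=c_2$ does not follow from $Z_1=Z_2$ together with agreement of the data on a neighbourhood of $Z''$ alone.
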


\begin{proof}
The first assertion is obvious. To prove the second one, take an
element
   $$c=(Y,W,\phi_1,\ldots,\phi_{m+n},f:W\to \A^1;g:W\to X) \in \F_m(U,X\times T^{n}\times \A^1)$$
with $W=(\A^m\times U)^h_Y$. Then
$p(c)=(Z,W,\phi_1,\ldots,\phi_{m+n+1};g)$, where $Z=Y\cap Z(f)$,
$\phi_{m+n+1}=f$.

Since $Y$ is connected and finite over the henselian $U$, then $Y$
is henselian and local. Hence $W$ is local. By
Remark~\ref{r:henselian_properties} the closed subset $Z$ is
connected, whence the second assertion of the lemma.

To prove the third assertion, it sufficient to construct a section
   $$s:\F^{qf}_m(U,X\times T^{n+1})\setminus * \to \F_m(U,X\times T^{n}\times \A^1)\setminus \F_m(U,X\times T^{n}\times \Gm)$$
of $p$ and check that the map $p$ is injective on the complement of
$\F_m(U,X\times T^{n}\times \Gm)$.

We construct $s$ as follows. Take an element $c=(Z,\A^m\times
U\xleftarrow{can} W,\phi_1,\ldots,\phi_{m+n+1};g)$ in
$\F^{qf}_m(U,X\times T^{n+1})$ with a non-empty $Z$ and with
$W=(\A^m\times U)^h_Z$. Since $Z$ is connected and finite over the
local henselian $U$, the scheme $W$ is local. Set
   $$s(c)=(can(Y),\A^m\times U\xleftarrow{can} W,\phi_1,\ldots,\phi_{m+n};\phi_{m+n+1}:W\to \A^1;g:W\to X),$$
where $Y=Z(\phi_1,\ldots,\phi_{m+n})\subset W$. The set $Y$ is
quasi-finite over $U$, because $c\in \F^{qf}_m(U,X\times
T^{n+1})$. By Remark~\ref{r:henselian_properties} the set $Y$ is
finite over $U$ and connected and local. By
Lemma~\ref{l:closed-embedding} the morphism $can|_{Y}:Y\to
\A^m\times U$ is a closed embedding and $can^{-1}(can(Y))=Y$. Thus
$s(c)\in \F_m(U,X\times T^{n+1}\times \A^1).$ Clearly,
$p(s(c))=c$.

Now check the required injectivity for $p$. Let
$c'=(Y',W',\phi'_1,\ldots,\phi'_{m+n},f':W'\to \A^1;g':W'\to X)$ and
$c''=(Y'',W'',\phi''_1,\ldots,\phi''_{m+n},f'':W''\to
\A^1;g'':W''\to X)$ in $\F_m(U,X\times T^{n}\times \A^1)$ be two
elements with $W'=(\A^m\times U)^h_{Y'}$ and $W''=(\A^m\times
U)^h_{Y''}$. Let $can':W'\to \A^m\times U$ be the canonical morphism
and let $s':Y'\to W'$ be the section of $can'$.

Suppose that $p(c)=p(c')$ and the support $Z=Y'\cap Z(f')=Y''\cap
Z(f'')$ is non-empty. We must check that $c'=c''$. The element
$p(c')$ is of the form
   $$(Z,\A^m\times U\xleftarrow{can} W,\phi_1,\ldots,\phi_{m+n+1};g),$$
where $Z=Y'\cap Z(f')$, $W=(\A^m\times U)^h_Z$,
$\phi_{i}=\phi_i|_{W}$,$g=g|_W$. Consider the canonical morphism
$can_1: W\to W'$. It is the henselization of $W'$ at $s'(Y')$. Note
that $s'(Y')\subset W'$ is a closed subset containing $s'(Z)$.
Moreover, $Y'$ is finite over the henselian $U$ and connected. By
Lemma~\ref{l:lift} there is a unique section $t':Y'\to W$ of the
morphism $can_1$, and $t'(Y')=can^{-1}_1(Y')$ contains $s(Z)$, where
$s:Z\to W$ is the section of $can_1$ (the morphism $s$ is also the
section of $can=can'\circ can_1$). These arguments imply an equality
   $$c'=(Y',W,\phi'_1|_W,\ldots,\phi'_{m+n}|_W,f'|_W:W\to \A^1;g'|_W:W\to X) \in \F_m(U,X\times T^{n}\times \A^1).$$
By the same reason one has an equality
   $$c''=(Y'',W,\phi''_1|_W,\ldots,\phi''_{m+n}|_W,f''|_W:W\to \A^1;g''|_W:W\to X) \in \F_m(U,X\times T^{n}\times \A^1).$$
Since $W$ is the henselization of $\A^m\times U$ at $Z$ and
$p(c')=p(c'')$, one has equalities $\phi'_i|_W=\phi''_i|_W$ for
$i=1,...,m+n$, $f'|_W=f''_W$ and $g'|_W=g''|_W$. Hence $Y'=Y''$ and,
moreover, $c'=c''$ in $\F_m(U,X\times T^{n}\times \A^1)$. The
desired injectivity is proved. The section $s$ is constructed above
and our lemma follows.
\end{proof}

\begin{corollary}
\label{cor:pushout}
For any integer $n\geq 0$ the natural morphism
$$\alpha_*: \ZF_*(X\times T^n \times \A^1)/\ZF_*(X\times T^n \times \Gm)\to \ZF^{qf}_*(X\times T^{n+1})$$
of $\ZF_*(k)$-presheaves is an isomorphism locally for the Nisnevich
topology. As a consequence, the natural morphism
   $$\alpha: \ZF(X\times T^n \times \A^1)/\ZF(X\times T^n \times \Gm)\to \ZF^{qf}(X\times T^{n+1})$$
of $\ZF_*(k)$-presheaves is an isomorphism locally for the Nisnevich topology.
\end{corollary}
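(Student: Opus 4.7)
The plan is to deduce the corollary as a direct formal consequence of Lemma~\ref{l:pushout}. Fix $m \geq 0$ and let $U$ be an essentially $k$-smooth local henselian scheme. Lemma~\ref{l:pushout} furnishes a pushout square of pointed sets in which the bottom-left corner is ${*}$ and the top horizontal $i$ is the inclusion of a pointed subset. Equivalently, $\F^{qf}_m(U,X_+\wedge T^{n+1})$ is obtained from $\F_m(U,X_+\wedge T^n\wedge \A^1_+)$ by collapsing the pointed subset $\F_m(U,X_+\wedge T^n\wedge \Gm_+)$ to the basepoint, and the induced map $p$ is injective on the complement of this subset.

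I would then apply the reduced free abelian group functor $\widetilde{\bb Z}[-]$ to this square. By Lemma~\ref{l:ZF_qf_m_and_ZF_m}(1) and Definition~\ref{F_m_U_Y/(Y-S)}, the sets $\F^{qf}_m(U,-)\setminus{*}$ and $\F_m(U,-)\setminus{*}$ are free $\bb Z$-bases of the corresponding $\ZF$-groups, so $\widetilde{\bb Z}[-]$ transforms the pushout square into a short exact sequence of abelian groups
$$0 \to \ZF_m(U,X_+\wedge T^n\wedge \Gm_+) \to \ZF_m(U,X_+\wedge T^n\wedge \A^1_+) \to \ZF^{qf}_m(U,X_+\wedge T^{n+1}) \to 0.$$
This identifies $\alpha_m$ as an isomorphism of abelian groups on every Nisnevich stalk of $Sm/k$, which is exactly the statement that $\alpha_m$ is a Nisnevich local isomorphism of presheaves on $Sm/k$. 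Taking the direct sum over $m \geq 0$ yields the first assertion for $\alpha_*$.

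For the second assertion, I would pass to the colimit along the suspension maps $\Sigma$. Since $\Sigma$ is compatible with all of the horizontal and vertical maps in the pushout square and with the quotient operation, and since filtered colimits of abelian groups are exact and commute with Nisnevich sheafification, the family of local isomorphisms $\{\alpha_m\}_{m\geq 0}$ assembles into the desired local isomorphism
$$\alpha:\ZF(X_+\wedge T^n\wedge \A^1_+)/\ZF(X_+\wedge T^n\wedge \Gm_+)\to \ZF^{qf}(X_+\wedge T^{n+1}).$$
There is essentially no substantive obstacle in this step: all the geometric content, namely the henselian connectedness arguments and the construction of the set-theoretic section $s$, was already carried out in Lemma~\ref{l:pushout}. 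The only mild subtlety to verify is that one may legitimately pass to the smaller groups $\ZF_m$ (rather than working with $\ZFr_m$ and then quotienting by the disjoint-union relations afterwards), and this is permitted precisely because Lemma~\ref{l:ZF_qf_m_and_ZF_m}(1) provides the free-basis description of $\ZF^{qf}_m$ and $\ZF_m$ in terms of the pointed sets $\F^{qf}_m\setminus{*}$ and $\F_m\setminus{*}$ appearing in the pushout square.
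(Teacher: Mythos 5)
Your proposal is correct and takes exactly the approach the paper intends but does not spell out: the paper states Corollary~\ref{cor:pushout} with no written proof, treating it as an immediate formal consequence of Lemma~\ref{l:pushout} via the free-basis description in Lemma~\ref{l:ZF_qf_m_and_ZF_m}(1) and Definition~\ref{F_m_U_Y/(Y-S)}, followed by passage to direct sums and to the filtered colimit along $\Sigma$. Your write-up simply fills in those omitted routine steps accurately.
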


We are now in a position to prove Theorem~\ref{p:main}.

\begin{proof}[Proof of Theorem~\ref{p:main}]
The theorem is implied by Definitions~\ref{def:ZF_qf},
\ref{def:F_m_and_F_m_qf} and Corollary~\ref{cor:pushout}.
\end{proof}

We refer the reader to~\cite{GP2} for the definition of quasi-stability of framed presheaves.

\begin{lemma}\label{l:weak-eq_and_C_*}
Let $k$ be an infinite perfect field. Let $A$ and $B$ be linear
framed presheaves such that the cohomology presheaves of the
complexes $C_*(A)$ and $C_*(B)$ are quasi-stable.
Let $\alpha: A\to B$ be a morphism of linear framed presheaves,
which is an isomorphism locally in the Nisnevich topology. Then the morphism
   $$C_*(\alpha): C_*(A)\to C_*(B)$$
is a quasi-isomorphism locally in the Nisnevich topology.
\end{lemma}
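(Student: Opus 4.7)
The plan is a standard abelian-category reduction followed by the application of a structural theorem on quasi-stable framed presheaves from the framed-motives framework. First, form the kernel $K = \ker(\alpha)$ and cokernel $Q = \operatorname{coker}(\alpha)$ in the abelian category of linear framed presheaves. Since $\alpha$ is a Nisnevich-local isomorphism, both $K$ and $Q$ are Nisnevich-locally zero. Factoring $\alpha$ through its image $I = \operatorname{im}(\alpha)$ yields two short exact sequences of linear framed presheaves
\begin{equation*}
0 \to K \to A \to I \to 0, \qquad 0 \to I \to B \to Q \to 0,
\end{equation*}
to which I apply the degreewise-exact functor $C_*$ and pass to the long exact sequences of cohomology presheaves.

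Quasi-stability is preserved by extensions in such long exact sequences via the five-lemma applied to the diagrams produced by the $\sigma$-action, so the hypothesis that $H_*(C_*A)$ and $H_*(C_*B)$ are quasi-stable forces $H_*(C_*I)$, $H_*(C_*K)$, and $H_*(C_*Q)$ to be quasi-stable as well (analogously for $\bb Z[1/2]$-modules in the respective version). Now invoke the structural theorem from \cite{GP1, GP2}: for an infinite perfect field $k$ with $\charr k \neq 2$ (respectively of arbitrary characteristic, after inverting $2$), if $F$ is a linear framed presheaf such that $H_*(C_*F)$ is quasi-stable and $F$ is Nisnevich-locally zero, then the Nisnevich sheafifications of $H_*(C_*F)$ all vanish. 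Applying this to $K$ and $Q$ gives $a_{Nis}H_*(C_*K) = 0 = a_{Nis}H_*(C_*Q)$; plugging this into the long exact cohomology sequences of the two short exact sequences above yields that $C_*A \to C_*I$ and $C_*I \to C_*B$ are Nisnevich-local quasi-isomorphisms, hence so is their composite $C_*(\alpha)$.

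The main obstacle is the invocation of the structural theorem in the previous paragraph, which rests on the strict $\bb A^1$-invariance results for quasi-stable framed presheaves over infinite perfect fields. The characteristic hypothesis $\charr k \neq 2$ enters precisely here, and is bypassed by passing to $\bb Z[1/2]$-coefficients in the alternative version of the statement; once that theorem is in hand, the rest of the argument is a purely diagrammatic chase through the long exact sequences produced above.
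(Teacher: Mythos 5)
There is a genuine gap in the middle paragraph. Quasi-stability is not inherited by subobjects or quotients: for instance, multiplication by $2$ is an isomorphism on $\bb Z[1/2]$, yet neither its restriction to the subgroup $\bb Z$ nor the induced endomorphism of $\bb Z[1/2]/\bb Z$ is an isomorphism. Correspondingly, in the long exact homology sequence of $0\to K\to A\to I\to 0$, your hypothesis controls the $\sigma$-action only on the $H_*(C_*A)$-terms, that is, on positions $2$ and $5$ of each five-term stretch; the five-lemma needs four consecutive positions. The same objection applies to $0\to I\to B\to Q\to 0$. This part is repairable: working with the mapping cone $D$ of $\alpha$ instead of $K,I,Q$, the long exact sequence relating $H_*(C_*A)$, $H_*(C_*B)$ and $H_*(C_*D)$ has four out of every five consecutive terms under control, and the five-lemma does then show $H_*(C_*D)$ is quasi-stable.

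Even after that repair, the remaining step is circular. The ``structural theorem'' you invoke is precisely the case $B=0$ of the lemma being proved (a Nisnevich-locally trivial complex $D$ of linear framed presheaves with quasi-stable $C_*$-homology has Nisnevich-locally trivial $C_*$-homology), and it is not isolated as a citable statement in \cite{GP1} or \cite{GP2}. What the paper actually uses is \cite[7.4]{GP1}: an $S^1$-spectrum whose stable homotopy presheaves are radditive, quasi-stable and $\bb A^1$-invariant becomes motivically fibrant after a level-injective fibrant replacement. From this the paper's proof is direct: $EM(A)\to EM(B)$ is a Nisnevich local weak equivalence, hence $EM(C_*A)\to EM(C_*B)$ is a motivic equivalence; by \cite[7.4]{GP1} the level-injective fibrant replacements of both sides are motivically fibrant, so the induced map between them is a local level equivalence, and therefore $C_*(\alpha)$ is a Nisnevich local quasi-isomorphism. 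The kernel/cokernel reduction does not sidestep this input; it merely rephrases the lemma, and you would still have to run the same model-categorical argument to finish.
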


\begin{proof}
By assumption the map of the Eilenberg--Mac~Lane spectra
$\alpha:EM(A)\to EM(B)$ is a local weak equivalence. Hence the
induced map $\alpha:EM(C_*(A))\to EM(C_*(B))$ is a motivic weak
equivalence of $S^1$-spectra. By assumption, the presheaves of stable homotopy
groups of the spectra $EM(C_*(A)), EM(C_*(B))$ are radditive,
quasi-stable and $\bb A^1$-invariant (see~\cite{Voeradd}
for the definition of radditivity).

Let $EM(C_*(A))_f,EM(C_*(B))_f$ be fibrant replacements of
$EM(C_*(A)),EM(C_*(B))$ in the level injective model structure of
$S^1$-spectra. Then they are motivically fibrant spectra
by~\cite[7.4]{GP1}. Hence the stable motivic equivalence of
$S^1$-spectra $\alpha_f:EM(C_*(A))_f\to EM(C_*(B))_f$ is a Nisnevich
local weak equivalence of $S^1$-spectra. Thus the morphism of
complexes $C_*(\alpha): C_*(A)\to C_*(B)$ is a quasi-isomorphism
locally in the Nisnevich topology.
\end{proof}

We are now in a position to prove a statement which is necessary for
the proof of Theorem~\ref{th:Main}.

\begin{proposition}\label{p:factor_complex}
Let $k$ be an infinite perfect field. Then the morphism
   $$C_*(\alpha): C_*\ZF(X\times T^n \times \A^1)/C_*\ZF(X\times T^n \times \Gm)
       \to C_*\ZF^{qf}(X\times T^{n+1})$$
is a quasi-isomorphism locally in the Nisnevich topology.
\end{proposition}

\begin{proof}
The morphism $\alpha: \ZF(X\times T^n \times \A^1)/\ZF(X\times
T^n \times \Gm)\to \ZF^{qf}(X\times T^{n+1})$ is a morphism of
linear framed presheaves. Set $A=\ZF(X\times T^n \times \A^1)/\ZF(X\times T^n \times \Gm)$ and $B=\ZF^{qf}(X\times
T^{n+1})$. Then the cohomology presheaves of the complexes $C_*(A)$
and $C_*(B)$ are quasi-stable by the construction of $A$ and $B$.
Now Theorem~\ref{p:main} and Lemma~\ref{l:weak-eq_and_C_*} imply the
claim.
\end{proof}

Thus we have computed the complex $C_*\ZF(X\times T^n
\times \A^1)/C_*\ZF(X\times T^n \times \Gm)$  locally in the Nisnevich topology
as the complex
$C_*\ZF^{qf}(X\times T^{n+1})$. Our next goal is to show that the
latter complex is quasi-isomorphic to $C_*\ZF(X\times T^{n+1})$
locally in the Zariski topology verifying Theorem~\ref{p:moving}.
The next two sections are dedicated to this theorem.

We finish this section with the following remark justifying the use of property of
quasi-stability of linear framed presheaves.

\begin{remark}\label{justification}{\rm
Lemma \ref{l:weak-eq_and_C_*} is also true if the condition
``the cohomology presheaves of the
complexes $C_*(A)$ and $C_*(B)$ are quasi-stable"
is replaced by the condition ``the cohomology presheaves of the
complexes $C_*(A)$ and $C_*(B)$ are stable":
the morphism
   $C_*(\alpha): C_*(A)\to C_*(B)$
is a quasi-isomorphism locally in the Nisnevich topology.

However, we can not apply this for the proof of Proposition \ref{p:factor_complex}.
Indeed,
the cohomology presheaves of the complex
$$C_*\ZF(X\times T^n\times \A^1)/C_*\ZF(X\times T^n \times \Gm)$$
are quasi-stable only. And it is not readily apparent that they are stable.



%
}
\end{remark}

\section{A filtration on $\ZF_n(-,X\times T^{n+1})$}

We start with the following

\begin{definition}\label{d:defining_set}{\rm
Let $U\in Sm/k$ be an affine variety and let $c=(Z,W,\phi;g)\in
\Fr_m(U,X\times T^{n+1})$ be a framed correspondence. A finite set
of polynomials $F_1,\ldots, F_r \in k[\A^{m+n+1}\times U]$ is said
to be {\it $c$-defining\/} if for every point $u\in U$ there is
$i\in \{1,2...,r\}$ such that
\begin{itemize}
\item[$\diamond$] the polynomial $F_i(-,u) \in k(u)[\A^{m+n+1}]$ is
nonzero,
\item[$\diamond$] $\phi(W_u)\subseteq Z(F_i(-,u))$ in
$\A^{m+n+1}_u$.
\end{itemize}
Note that if a finite set of polynomials $F_1,\ldots, F_r \in
k[\A^{m+n+1}\times U]$ is $c$-defining, where
$c=(Z,W,\phi;g)=(Z_1\sqcup Z_2,W,\phi;g)$, then the same collection
of polynomials is $(Z_1,W-Z_2,\phi;g)$-defining and is
$(Z_2,W-Z_1,\phi;g)$-defining respectively.

}\end{definition}

\begin{remark}{\rm
We want to explain how we use a finite $c$-defining set of polynomials
$F_1,\ldots, F_r \in k[\A^{m+n+1}\times U]$.
Take an integer $d$ strictly greater than the
degrees of all $F_i$-s. Lemma~\ref{l:homotopy_properties} below gives rise to a
``homotopy"
$$h^d_s(c)\in \Fr_n(U\times\bb A^1,X\times T^{n+1})$$
between
$h^d_0(c)= c\in \Fr_n(U,X\times T^{n+1})$
and
$h^d_1(c)= t_d(c)\in \Fr^{qf}_n(U,X\times T^{n+1})$.
Moreover, if $c \in \Fr^{qf}_m(U,X\times T^{n+1})$,
then $h^d_s(c) \in \Fr^{qf}_m(U\times \A^1,X\times T^{n+1})$.
Finally, if $Z$ is the support of $c$, then $Z\times \bb A^1$ is the support of $h^d_s(c)$.
}\end{remark}

The following lemma is crucial in our analysis.

\begin{lemma}
Let $m,n\geq 0$ and $Y$ an affine (possibly non-irreducible)
$k$-variety. Let $W\to \A^m\times Y$ be an \'{e}tale morphism and
$\psi: W\to \A^{m+n+1}\times Y$ be a morphism of $Y$-schemes. Then
there is a finite set of polynomials $F_1,\ldots, F_r \in
k[\A^{m+n+1}\times Y]$ such that for every point $y\in Y$ there is
$i\in \{1,2...,r\}$ such that
\begin{itemize}
\item[$\diamond$] the polynomial $F_i(-,y) \in k(y)[\A^{m+n+1}]$ is nonzero and
\item[$\diamond$] $\psi(W_y)\subseteq Z(F_i(-,u))$ in $\A^{m+n+1}_u$.
\end{itemize}
\end{lemma}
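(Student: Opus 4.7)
The plan is Noetherian induction on the closed subschemes of $Y$, driven by a dimension estimate. Since $W\to\mathbb{A}^m\times Y$ is \'etale, each fibre $W_y$ has dimension at most $m$, so its image $\psi(W_y)$ in $\mathbb{A}^{m+n+1}_{k(y)}$ has dimension at most $m<m+n+1$. Hence there exists a nonzero polynomial in $k(y)[x_1,\ldots,x_{m+n+1}]$ vanishing on $\psi(W_y)$; the content of the lemma is to make this choice \emph{uniform} across $Y$ using only finitely many polynomials.

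For the inductive step I pick a generic point $\eta$ of $Y$, choose a nonzero $\widetilde F\in k(\eta)[x_1,\ldots,x_{m+n+1}]$ vanishing on $\psi(W_\eta)$, and clear denominators in $k[Y]$ to obtain $F\in k[\mathbb{A}^{m+n+1}\times Y]$ whose image in the generic stalk is a nonzero multiple of $\widetilde F$. I then spread out on two fronts. First, the open locus $U_1\subseteq Y$ on which $F(-,y)\ne 0$ contains $\eta$, since the coefficients of $F$ viewed as elements of $k[Y]$ are not all zero at $\eta$. Second, setting $V:=\psi^{-1}(Z(F))\subseteq W$ and $C:=W\setminus V$, by construction $C_\eta=\emptyset$; by Chevalley the image of $C$ in $Y$ is constructible, and its Zariski closure $T$ is a proper closed subset of $Y$ not containing $\eta$. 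On $U_2:=Y\setminus T$, which is open and contains $\eta$, one has $C_y=\emptyset$ for every $y\in U_2$, i.e.\ $\psi(W_y)\subseteq Z(F(-,y))$. Hence $F$ handles every $y$ in the open set $U:=U_1\cap U_2\ni\eta$.

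I finish by applying the induction hypothesis to the proper closed subscheme $Y':=Y\setminus U$ (with its reduced structure): the base changes $W\times_Y Y'\to\mathbb{A}^m\times Y'$ and $\psi\times_Y Y'$ still satisfy the hypotheses of the lemma, producing polynomials $\widetilde F_1,\ldots,\widetilde F_r\in k[\mathbb{A}^{m+n+1}\times Y']$ handling every point of $Y'$. I lift each $\widetilde F_j$ to $F_j\in k[\mathbb{A}^{m+n+1}\times Y]$ via the surjection $k[Y]\twoheadrightarrow k[Y']$ afforded by affineness of $Y$; the nonvanishing and containment conditions at points of $Y'$ depend only on the restriction to $Y'$ and so are preserved by the lift. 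The collection $\{F,F_1,\ldots,F_r\}$ then handles every $y\in Y$, and the recursion terminates by the descending chain condition on closed subsets of the Noetherian scheme $Y$. The main (mild) obstacle is the bookkeeping around Chevalley's theorem and the polynomial lifts, but both reduce directly to the fact that $C_\eta=\emptyset$ and that the restriction of $F_j$ to $Y'$ recovers $\widetilde F_j$.
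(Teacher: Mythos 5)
Your argument is correct, and it is a genuinely different (if closely related) route from the one in the paper. The paper argues by induction on $\dim Y$ rather than by Noetherian induction: it lists the irreducible components $Y_1,\ldots,Y_l$ of $Y$ and, for each $i$, uses the bound $\dim W_{Y_i}=m+\dim Y_i<m+n+1+\dim Y_i=\dim(\A^{m+n+1}\times Y_i)$ to produce \emph{one} nonzero polynomial $\bar F_i\in k[\A^{m+n+1}\times Y_i]$ that vanishes on the closure of the \emph{entire} image $\psi(W_{Y_i})$; extending each $\bar F_i$ to $F_i\in k[\A^{m+n+1}\times Y]$ and removing the open locus $V$ where some $F_i(-,y)\neq 0$, the complement $Y'=Y\setminus V$ has strictly smaller dimension because $V$ meets every component, and one recurses. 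The key contrast with your proof: because the paper's $F_i$ is chosen to kill $\psi(W_{Y_i})$ globally over $Y_i$, the containment $\psi(W_y)\subseteq Z(F_i(-,y))$ is automatic for every $y\in Y_i$, so no constructibility or spreading-out argument is needed — only the fibrewise nonvanishing has to be arranged, and that is what defines $V$. You instead work with a single generic point $\eta$, choose $\widetilde F$ to kill only the generic fibre $\psi(W_\eta)$, and then invoke Chevalley's theorem to propagate the containment to an open neighbourhood of $\eta$; consequently you only get a proper closed $Y'$ rather than one of strictly smaller dimension, so you must appeal to the descending chain condition rather than induction on dimension. Both termination arguments are valid, and your treatment of the lifting step $k[\A^{m+n+1}\times Y]\twoheadrightarrow k[\A^{m+n+1}\times Y']$ matches the paper's. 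One small bonus of your version is that it makes the containment step for points of $V$ (resp.\ $U$) completely explicit via Chevalley, whereas the paper leaves to the reader the check that the index $i$ witnessing $F_i(-,y)\neq 0$ can be taken so that $y\in Y_i$.
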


\begin{proof}
We proceed by induction in the dimension of $Y$. If $dim(Y)=0$ then
there is nothing to prove. Now suppose $dim(Y)>0$. Let $Y_1,...,Y_l$
be all irreducible components of $Y$. For an index $i$ from
$\{1,...,l\}$ take the restriction of the map $\psi|_{W_{Y_i}}\colon
W_{Y_i}\to\A^{m+n+1}\times Y_i$ to $Y_i$. The dimension of $W_{Y_i}$
is $m+dim(Y_i)$. Thus the closure of its image is contained in the
zero locus $Z(\bar F_{(i)})$ of a non-zero polynomial $\bar
F_{(i)}\in k[\A^{m+n+1}\times Y_i]$. Since $Y_i$ is closed in the
affine variety $Y$, the polynomial $\bar F_i$ can be extended to a
polynomial $F_i\in k[\A^{m+n+1}\times Y]$. Let $V\subset Y$ be an
open subset consisting of those $y\in Y$ such that there is $i$ with
$0\neq F_i(-,y)$ in $k(y)[\A^{m+n+1}]$. Let $Y'=Y-V$.

By construction, $V$ has a non-empty intersection with every
irreducible component of $Y$. Thus $dim(Y')<dim(Y)$. By the
inductive assumption there are polynomials $\bar F_{l+1},...,\bar
F_{r}$ in $k[\A^{m+n+1}\times Y']$ such that for every point $y\in
Y'$ there is $j\in \{l+1,...,r\}$ with each polynomial $\bar
F_j(-,y) \in k(y)[\A^{m+n+1}]$ nonzero and $\psi(W_y)\subseteq
Z(\bar F_j(-,y))$ in $\A^{m+n+1}_y$. Since $Y'$ is closed in the
affine $Y$ for any $j$, the polynomial $\bar F_j$ can be extended to
a polynomial $F_j$ in $k[\A^{m+n+1}\times Y]$. Clearly, the set of
polynomials $F_i$, where $i\in \{1,2,...,r\}$, are the desired
polynomials for $Y$.
\end{proof}

The preceding lemma has the following

\begin{corollary}\label{cor:existdef}
Let $U\in Sm/k$ be an affine variety and let $c=(Z,W,\phi;g)\in
\Fr_m(U,X\times T^{n+1})$ be a framed correspondence. Then there
exists a $c$-defining set of polynomials $F_1,\ldots, F_r \in
k[\A^{m+n+1}\times U]$.

Moreover, if $f: V\to U$ is a morphism of
$k$-smooth affine varieties and $F_1,\ldots, F_r \in
k[\A^{m+n+1}\times U]$ is a $c$-defining set, then $f^*(F_1),\ldots,
f^*(F_r) \in k[\A^{m+n+1}\times V]$ is a $f^*(c)$-defining set.
\end{corollary}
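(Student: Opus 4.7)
The plan is to deduce both assertions directly from the preceding lemma together with the observation that the ``$c$-defining'' condition is well-behaved under field extensions. For the first assertion, given $c=(Z,W,\phi_1,\ldots,\phi_{m+n+1};g)\in \Fr_m(U,X_+\wedge T^{n+1})$, I first repackage the data into a morphism of $U$-schemes $\psi\colon W\to \A^{m+n+1}\times U$ whose first factor is the map $\phi=(\phi_1,\ldots,\phi_{m+n+1})\colon W\to \A^{m+n+1}$ and whose second factor is the structure map $W\to U$ (obtained by composing the \'etale morphism $W\to \A^m\times U$ with the projection to $U$). Since $W$ is \'etale over $\A^m\times U$, the preceding lemma applied with $Y=U$ and this choice of $\psi$ produces polynomials $F_1,\ldots,F_r\in k[\A^{m+n+1}\times U]$ satisfying exactly the two bullet points of Definition~\ref{d:defining_set}. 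Hence they form a $c$-defining set.

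For the ``moreover'' statement, fix a $c$-defining set $F_1,\ldots,F_r$ and a morphism $f\colon V\to U$ of $k$-smooth affine varieties. The pullback $f^{*}(c)$ has $f^{*}W=W\times_U V$ as its \'etale neighborhood in $\A^m\times V$ and $f^{*}\phi=\phi\circ \operatorname{pr}_W$ as its framing map. Given a point $v\in V$, set $u=f(v)\in U$ and select an index $i$ witnessing the $c$-defining condition at $u$. The pullback $f^{*}(F_i)(-,v)\in k(v)[\A^{m+n+1}]$ is obtained from $F_i(-,u)\in k(u)[\A^{m+n+1}]$ via the field extension $k(u)\hookrightarrow k(v)$, so nonvanishing of $F_i(-,u)$ implies nonvanishing of $f^{*}(F_i)(-,v)$. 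Under the identification $(f^{*}W)_v=W_u\times_{k(u)}k(v)$, the inclusion $\phi(W_u)\subseteq Z(F_i(-,u))$ in $\A^{m+n+1}_u$ base-changes to $(f^{*}\phi)((f^{*}W)_v)\subseteq Z(f^{*}(F_i)(-,v))$ in $\A^{m+n+1}_v$. Therefore $f^{*}(F_1),\ldots,f^{*}(F_r)$ is $f^{*}(c)$-defining.

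There is no serious obstacle here: the corollary is essentially a translation of the preceding lemma into the language of Definition~\ref{d:defining_set}, plus an elementary check that nonvanishing of polynomials and scheme-theoretic inclusions in vanishing loci survive flat base change along field extensions. The genuine mathematical work lay in the induction on $\dim(Y)$ performed in the preceding lemma, which exploits the affineness of $Y$ both to extend polynomials from irreducible components to the ambient variety and to extend polynomials from a closed subvariety of strictly smaller dimension.
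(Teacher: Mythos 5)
Your proof is correct and takes the intended route: the paper offers no separate argument for the corollary, stating it as an immediate consequence of the preceding lemma, and your verification---applying the lemma with $Y=U$ and $\psi=(\phi,\pi)\colon W\to\A^{m+n+1}\times U$, then checking that nonvanishing and the containment $\phi(W_u)\subseteq Z(F_i(-,u))$ survive base change along $k(u)\hookrightarrow k(v)$---is exactly what is meant to be filled in.
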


Let $U\in Sm/k$ be an affine variety. Let $d>0$. Define
$\Fr_m^{<d}(U,X\times T^{n+1})$ as a subset of $\Fr_m(U,X\times
T^{n+1})$ consisting of those $c=(Z,W,\phi;g)\in \Fr_m(U,X\times
T^{n+1})$ for which there exists a $c$-defining set $F_1,\ldots F_r
\in k[\A^{m+n+1}\times U]$ with $\deg F_i<d$ for all $i=1,\ldots,r$.
Set,
   $$(\Fr^{qf}_m)^{<d}(U,X\times T^{n+1}):= \Fr^{qf}_m(U,X\times T^{n+1}) \cap\Fr_m^{<d}(U,X\times T^{n+1}).$$

Corollary~\ref{cor:existdef}
shows that each of these filtrations is stable under pullbacks.
Hence one has the following obvious

\begin{lemma}\label{l:exhauting_1}
For any integers $m,n\geq 0$ and any integer $d>0$ the following
statements are true:
\begin{itemize}
\item[(i)]
$\Fr_m^{<d}(-,X\times T^{n+1})$ is a subpresheaf on $AffSm/k$ of
the presheaf $\Fr_m(-,X\times T^{n+1})$;
\item[(ii)]
the increasing filtration of the presheaf $\Fr_m(-,X\times
T^{n+1})|_{AffSm/k}$ by subpresheaves $\Fr^{<d}_m(-,X\times
T^{n+1})$ is exhausting;
\item[(iii)]
$(\Fr^{qf}_m)^{<d}(-,X\times T^{n+1})$ is a subpresheaf on
$AffSm/k$ of the presheaf $\Fr^{qf}_m(-,X\times T^{n+1})$;
\item[(iv)]
the increasing filtration of the presheaf $\Fr^{qf}_m(-,X\times
T^{n+1})|_{AffSm/k}$ by subpresheaves $(\Fr^{qf}_m)^{<d}(-,X\times
T^{n+1})$ is exhausting.
\end{itemize}
\end{lemma}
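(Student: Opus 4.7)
The plan is to derive all four parts directly from Corollary~\ref{cor:existdef}, which provides both the existence of a $c$-defining set for any framed correspondence $c$ and the pullback compatibility: for a morphism $f: V \to U$ of smooth affine varieties and a $c$-defining set $F_1, \ldots, F_r \in k[\A^{m+n+1}\times U]$, the pulled back polynomials $f^*(F_1), \ldots, f^*(F_r) \in k[\A^{m+n+1}\times V]$ form an $f^*(c)$-defining set. The extra observation needed to combine this with the degree condition is that the degree in the $\A^{m+n+1}$-variables (taking coefficients in $\Gamma(U,\mathcal{O})$, so that $k[\A^{m+n+1}\times U]=\Gamma(U,\mathcal{O})[x_1,\ldots,x_{m+n+1}]$) cannot increase under pullback along $f$, since such pullback merely transports polynomials along the ring map $\Gamma(U,\mathcal{O}) \to \Gamma(V,\mathcal{O})$ without touching the $x$-variables.

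For part~(i), if $c \in \Fr_m^{<d}(U, X_+\wedge T^{n+1})$ is witnessed by a $c$-defining set $F_1,\ldots,F_r$ with each $\deg F_i < d$, then $f^*(F_1),\ldots,f^*(F_r)$ is an $f^*(c)$-defining set with the same degree bound, so $f^*(c) \in \Fr_m^{<d}(V, X_+\wedge T^{n+1})$. This gives the subpresheaf property. For part~(ii), given any $c \in \Fr_m(U, X_+\wedge T^{n+1})$, the first assertion of Corollary~\ref{cor:existdef} produces a $c$-defining set $F_1,\ldots,F_r$; setting $d = 1 + \max_i \deg F_i$ shows $c \in \Fr_m^{<d}(U, X_+\wedge T^{n+1})$, so the filtration is exhausting.

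Parts~(iii) and~(iv) then follow formally. Since $(\Fr^{qf}_m)^{<d}(-,X_+\wedge T^{n+1})$ is by construction the intersection of the subpresheaves $\Fr^{qf}_m(-,X_+\wedge T^{n+1})$ and $\Fr_m^{<d}(-,X_+\wedge T^{n+1})$, statement~(iii) is immediate from~(i) together with the fact that $\Fr^{qf}_m(-,X_+\wedge T^{n+1})$ is a subpresheaf of $\Fr_m(-,X_+\wedge T^{n+1})$. For~(iv), any $c \in \Fr^{qf}_m(U, X_+\wedge T^{n+1})$ lies in $\Fr_m(U,X_+\wedge T^{n+1})$, so by~(ii) it belongs to $\Fr_m^{<d}$ for some $d$, and hence to $(\Fr^{qf}_m)^{<d}$. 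There is no serious obstacle; consistent with the paper's characterization of this as an \emph{obvious application} of Corollary~\ref{cor:existdef}, all the content is already in that corollary combined with the elementary behavior of polynomial degree under base change on the coefficient ring.
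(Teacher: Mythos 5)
Your proof is correct and fills in exactly the details the paper leaves implicit by calling the lemma an ``obvious application'' of Corollary~\ref{cor:existdef}: the pullback-compatibility of defining sets gives~(i), existence gives~(ii), and~(iii)--(iv) follow formally since $(\Fr^{qf}_m)^{<d}$ is the intersection of $\Fr^{qf}_m$ with $\Fr_m^{<d}$. The one substantive observation you correctly make explicit is that pullback along $f:V\to U$ acts only on the coefficient ring $\Gamma(U,\mathcal{O})\to\Gamma(V,\mathcal{O})$ and so cannot raise the degree in the $\A^{m+n+1}$-variables, which is the degree relevant to Lemma~\ref{lem:tnonzero}.
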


\begin{definition}\label{d:filtration_on_ZF_qf_m}{\rm
For an affine $k$-smooth $U$ we define $\ZF_m^{<d}(U,X\times T^{n+1})$ as
$$\mathbb Z[\Fr_m^{<d}(U,X\times T^{n+1})]/
\langle(Z_1\sqcup Z_2,W,\phi;g)-(Z_1,W_2,\phi|_{W_2};g|_{W_2})
-(Z_2,W_1,\phi|_{W_1};g|_{W_1})\rangle,$$ where $W_i=W-Z_i$ for
$i=1,2$. By Lemma~\ref{l:exhauting_1} the assignment $U\mapsto
\ZF_m^{<d}(U,X\times T^{n+1})$ is a presheaf on $AffSm/k$.

Likewise, for an affine $k$-smooth $U$ define
$(\ZF^{qf}_m)^{<d}(U,X\times T^{n+1})$ as
$$\mathbb Z[(\Fr^{qf}_m)^{<d}(U,X\times T^{n+1})]/
\langle(Z_1\sqcup
Z_2,W,\phi;g)-(Z_1,W_2,\phi|_{W_2};g|_{W_2})-(Z_2,W_1,\phi|_{W_1};g|_{W_1})\rangle,$$
where $W_i=W-Z_i$ for $i=1,2.$ By Lemma~\ref{l:exhauting_1} the
assignment $U\mapsto (\ZF^{qf}_m)^{<d}(U,X\times T^{n+1})$ is a
presheaf on $AffSm/k$.

Recall that $\ZF_m(-,X\times T^{n+1})$ is a presheaf on $Sm/k$
(see Definition~\ref{stab}) given by
   $$\mathbb Z[\Fr_m(U,X\times T^{n+1})]/\langle(Z_1\sqcup Z_2,W,\phi;g)-(Z_1,W_2,\phi|_{W_2};g|_{W_2})-(Z_2,W_1,\phi|_{W_1};g|_{W_1})\rangle,$$
where $W_i=W-Z_i$ for $i=1,2$. In turn, $\ZF^{qf}_m(-,X\times
T^{n+1})$ is a presheaf on $Sm/k$ (see Definition~\ref{def:ZF_m_qf})
given by
   $$\mathbb Z[\Fr^{qf}_m(U,X\times T^{n+1})]/\langle(Z_1\sqcup Z_2,W,\phi;g)-(Z_1,W_2,\phi|_{W_2};g|_{W_2})-(Z_2,W_1,\phi|_{W_1};g|_{W_1})\rangle,$$
where $W_i=W-Z_i$ for $i=1,2.$

}\end{definition}

For any positive integers $d < d'$ the inclusion
$\Fr_m^{<d}(-,X\times T^{n+1})\subset \Fr_m^{<d}(-,X\times
T^{n+1})$ induces a morphism of
presheaves of Abelian groups on $AffSm/k$
   $$\ZF_m^{<d}(U,X\times T^{n+1})\to \ZF_m^{<d'}(U,X\times T^{n+1}).$$
Likewise, for any positive $d < d'$ the inclusion
$(\Fr^{qf}_m)^{<d}(-,X\times T^{n+1})\subset
(\Fr^{qf}_m)^{<d'}(-,X\times T^{n+1})$ induces a morphism of
presheaves of Abelian groups on $AffSm/k$
   $$(\ZF^{qf}_m)^{<d}(U,X\times T^{n+1})\to (\ZF^{qf}_m)^{<d'}(U,X\times T^{n+1}).$$

The next statement is a consequence of Lemma~\ref{l:exhauting_1}.

\begin{corollary}\label{cor:exhauting_2}
One has two equalities of presheaves on $AffSm/k$
   $$\colim_{d} \ZF_m^{<d}(-,X\times T^{n+1})=\ZF_m(-,X\times T^{n+1})$$
and
   $$\colim_{d} (\ZF^{qf}_m)^{<d}(-,X\times T^{n+1})=\ZF^{qf}_m(-,X\times T^{n+1}).$$
\end{corollary}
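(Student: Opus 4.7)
The plan is to reduce both equalities to the exhaustiveness statements already established in Lemma~\ref{l:exhauting_1} for the underlying pointed-set presheaves $\Fr_m$ and $\Fr^{qf}_m$, and then to commute the passage to free abelian groups (modulo the standard disjoint-union relations) with the filtered colimit. Since a colimit of presheaves on $AffSm/k$ is computed sectionwise, it suffices to prove, for every affine $U\in AffSm/k$, the two equalities of abelian groups
\begin{equation*}
\colim_{d}\ZF_m^{<d}(U,X_+\wedge T^{n+1})=\ZF_m(U,X_+\wedge T^{n+1}),\qquad
\colim_{d}(\ZF^{qf}_m)^{<d}(U,X_+\wedge T^{n+1})=\ZF^{qf}_m(U,X_+\wedge T^{n+1}).
\end{equation*}

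First I would record the elementary fact that the free abelian group functor $S\mapsto\mathbb Z[S]$ commutes with filtered colimits of sets, and that quotients by a system of relations commute with filtered colimits of abelian groups (i.e., $\colim(A_d/R_d)=(\colim A_d)/(\colim R_d)$ whenever the transition maps send $R_d$ to $R_{d'}$ for $d<d'$). Applied here with $A_d=\mathbb Z[\Fr_m^{<d}(U,X_+\wedge T^{n+1})]$ and $R_d$ the subgroup of disjoint-union relations of Definition~\ref{d:filtration_on_ZF_qf_m}, this will reduce the first equality to the set-level identity $\colim_d\Fr_m^{<d}(U,X_+\wedge T^{n+1})=\Fr_m(U,X_+\wedge T^{n+1})$, which is exactly Lemma~\ref{l:exhauting_1}(ii).

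The only point that needs checking carefully is that the transition maps really do carry the relations at level $d$ into the relations at level $d'$, and moreover that every disjoint-union relation on $\Fr_m(U,X_+\wedge T^{n+1})$ actually appears at some finite level $d$. Both are supplied by the remark at the end of Definition~\ref{d:defining_set}: if $F_1,\dots,F_r$ is a $c$-defining family with $\deg F_i<d$ for $c=(Z_1\sqcup Z_2,W,\phi;g)$, then the same family (with the same degree bound) is defining for both truncations $(Z_i,W\setminus Z_j,\phi|_{W\setminus Z_j};g|_{W\setminus Z_j})$. Hence a single bound $d$ suffices for all three terms of any disjoint-union relation, which makes the relations in $\colim_d \ZF_m^{<d}$ coincide with those in $\ZF_m$.

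The $qf$-version is entirely parallel. Since $(\Fr^{qf}_m)^{<d}(-,X_+\wedge T^{n+1})=\Fr^{qf}_m(-,X_+\wedge T^{n+1})\cap\Fr_m^{<d}(-,X_+\wedge T^{n+1})$ by definition, Lemma~\ref{l:exhauting_1}(iv) yields the corresponding exhaustion at the set level, and the remark above on defining families works unchanged (the quasi-finiteness condition is preserved by the truncations $(Z_i,W\setminus Z_j,\dots)$). I do not anticipate any serious obstacle: the only potentially confusing point is the interplay between the two presheaf relations (disjoint-union and degree bound), and this is cleanly resolved by the observation that degree bounds are inherited by restrictions along open immersions $W\setminus Z_j\hookrightarrow W$, since the defining polynomials live on the ambient affine space $\A^{m+n+1}\times U$.
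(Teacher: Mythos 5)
Your proof is correct and fills in exactly the detail that the paper compresses into a one-line ``This is straightforward.'' Since the colimit is sectionwise, the only genuine content is that passing from the pointed-set presheaves $\Fr_m^{<d}$, $(\Fr^{qf}_m)^{<d}$ to the abelian-group presheaves $\ZF_m^{<d}$, $(\ZF^{qf}_m)^{<d}$ commutes with the filtered colimit in $d$, and this requires that each disjoint-union relation appears at a single finite level; you correctly identify the remark after Definition~\ref{d:defining_set} (and its analogue for the quasi-finiteness condition in Section~\ref{sec:Fr_qf_and_ZF_qf}) as the input guaranteeing that a degree bound $d$ on the defining family for $(Z_1\sqcup Z_2,W,\phi;g)$ is automatically inherited by both truncations $(Z_i,W\setminus Z_j,\ldots)$. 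This is the same reduction to Lemma~\ref{l:exhauting_1}(ii) and (iv) that the paper's author intends.
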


\begin{proof}
This is straightforward.
\end{proof}

\section{Moving lemma}\label{mlemma}

\begin{lemma}\label{lem:tnonzero}\footnote{We thank A.~Ananyevskiy for suggesting Lemma~\ref{lem:tnonzero}
in its present form.} Let $L$ be a field and $F\in L[x_1,\ldots
x_{r+1}]$ a nonzero polynomial such that $\deg F\leqslant (d-1)$.
Then the polynomials $F(t^{d^r},t^{d^{r-1}},\ldots, t^d,t)$ and
$F(t^d,\ldots,t^{d^{r-1}},t^{d^r},t)$ are both non-zero in $L[t]$.
Moreover, for any non-zero $s \in L$ the polynomials
$F(st^{d^r},st^{d^{r-1}},\ldots, st^d,t)$ and
$F(st^d,\ldots,st^{d^{r-1}},st^{d^r},t)$ are both non-zero.
\end{lemma}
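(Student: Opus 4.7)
The plan is to prove the lemma by exploiting uniqueness of base-$d$ expansions. The hypothesis $\deg F\leq d-1$ is stronger than it looks: it forces each individual exponent in any monomial of $F$ to lie in $\{0,1,\ldots,d-1\}$, which is precisely the digit range for base-$d$ representations.

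First I would write $F=\sum_{a}c_a\,x_1^{a_1}\cdots x_{r+1}^{a_{r+1}}$, where $a=(a_1,\ldots,a_{r+1})$ runs over the support of $F$, $c_a\in L^\times$, and $0\le a_i\le d-1$ for each $i$ by the degree bound. Under the first substitution $x_i\mapsto t^{d^{r+1-i}}$ (so $x_1=t^{d^r}$, $x_2=t^{d^{r-1}}$, \ldots, $x_r=t^d$, $x_{r+1}=t$), the monomial indexed by $a$ becomes $c_a\,t^{N(a)}$ with
\[
N(a)=a_1 d^r+a_2 d^{r-1}+\cdots+a_r d+a_{r+1}.
\]
Since each digit $a_i$ lies in $\{0,\ldots,d-1\}$, uniqueness of base-$d$ expansion gives that the map $a\mapsto N(a)$ is injective on the support of $F$. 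Thus distinct monomials contribute to distinct powers of $t$, no cancellation can occur, and $F(t^{d^r},\ldots,t^d,t)=\sum_a c_a t^{N(a)}\neq 0$ in $L[t]$.

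For the second substitution $x_i\mapsto t^{d^{i}}$ (giving $x_1=t^d$, $x_2=t^{d^2}$, \ldots, $x_r=t^{d^r}$, $x_{r+1}=t$), exactly the same argument applies with the new exponent function
\[
N'(a)=a_1 d+a_2 d^2+\cdots+a_r d^r+a_{r+1},
\]
which is again injective on tuples with $0\le a_i\le d-1$ by base-$d$ uniqueness (here $a_{r+1}$ is the units digit and $a_i$ is the $d^i$-digit for $1\le i\le r$). Hence the second polynomial is also nonzero.

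For the version with $s\in L^\times$, the substitution $x_i\mapsto st^{d^{r+1-i}}$ (resp.\ $x_i\mapsto st^{d^i}$) for $i\le r$ and $x_{r+1}\mapsto t$ sends the monomial indexed by $a$ to $c_a\,s^{a_1+\cdots+a_r}\,t^{N(a)}$ (resp.\ $c_a\,s^{a_1+\cdots+a_r}\,t^{N'(a)}$). Since $s\ne0$ and $c_a\ne0$ the coefficient remains nonzero, and the injectivity of $N$ (resp.\ $N'$) again prevents cancellation, finishing the proof. The only subtle step is recognising that $\deg F\le d-1$ implies the per-variable bound $a_i\le d-1$, which is what unlocks the base-$d$ trick; everything else is a direct verification.
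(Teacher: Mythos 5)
Your proof is correct and takes essentially the same approach as the paper: you observe that the total degree bound forces each exponent into $\{0,\ldots,d-1\}$, and that the exponents $N(a)$ arising after substitution are therefore distinct, so no cancellation occurs; the $s$-factor is handled by noting the coefficient picks up only a power of $s$. The only cosmetic difference is that you invoke uniqueness of base-$d$ expansions where the paper re-derives this injectivity on the spot by a short inductive divisibility argument ($i_{n+1}-j_{n+1}$ divisible by $d$ and of absolute value at most $d-1$, hence zero, then recurse), but these are the same fact.
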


\begin{proof}
Let us prove that the first polynomial is non-zero. If
$F=\sum_{I=(i_1,\ldots, i_{r+1})} a_I x^I$ then
   \begin{equation}\label{zvezda}
    F(t^{d^r},t^{d^{r-1}},\ldots, t^r,t)=\sum a_I t^{i_1d^r+\ldots+ i_nr+\ldots +i_{n+1}}
   \end{equation}
Let us check that if $I$ and $J$ are two different multi-indices,
then $I\cdot (d^r,\ldots,d,1)\neq J\cdot (d^r,\ldots,d,1)$. Indeed,
if these are equal, then
   \[d^r(i_1-j_1)+\cdots+d(i_n-j_n)+(i_{n+1}-j_{n+1})=0.\]
It follows that $i_{n+1}-j_{n+1}$ is divisible by $d$, but
$|i_{n+1}-j_{n+1}|\leqslant d-1$, hence $i_{n+1}=j_{n+1}$. Then
$(i_n-j_n)$ is divisible by $d$, hence zero and $(i_1-j_1)$ is zero
by induction. Thus all powers of $t$ in the sum~\eqref{zvezda} are
distinct. So if some $a_I\neq 0$ then the right hand side
of~\eqref{zvezda} is nonzero. The second polynomial is obtained from
the first one by permuting powers of $t$. Thus it is non-zero. For
any multi-index $I$ the coefficient at $t^{i_1d^r+\ldots+
i_nr+\ldots +i_{n+1}}$ in the polynomial
$F(st^{d^r},st^{d^{r-1}},\ldots, st^d,t)$ is obtained from $a_I$ by
multiplying a power of $s$. We see that the polynomial
$F(st^{d^r},st^{d^{r-1}},\ldots, st^d,t)$ is non-zero, and hence so
is $F(st^d,\ldots,st^{d^{r-1}},st^{d^r},t)$.
\end{proof}

Let $U\in Sm/k$ be an affine variety and let $c=(Z,W,\phi)\in
\Fr_m(U,X\times T^{n+1};g)$ be a framed correspondence. Let $d>0$
be an integer. Set
   $$t_d(c)=(Z,W,\phi_1-\phi_{m+n+1}^d,\phi_2-\phi_{m+n+1}^{d^2},\ldots,\phi_{m+n}-\phi_{m+n+1}^{d^{m+n}},\phi_{m+n+1};g).$$
Note that
$Z(\phi_1-\phi_{m+n+1}^d,\phi_2-\phi_{m+n+1}^{d^2},\ldots,\phi_{m+n}-\phi_{m+n+1}^{d^{m+n}},\phi_{m+n+1})=Z(\phi_1,\ldots,\phi_{m+n},\phi_{m+n+1})$
in $W$. Thus the tuple $t_d(c)$ is an element in $\Fr_n(U,X\times
T^{n+1})$ such that its support $Z$ is the same with that of $c$. If
$\tau$ is a variable, we put
   $$h^d(c)=(Z\times \A^1,W\times \A^1,\phi_1-\tau\phi_{m+n+1}^d,\phi_2-\tau\phi_{m+n+1}^{d^2},\ldots,\phi_{m+n}-\tau\phi_{m+n+1}^{d^{m+n}},\phi_{m+n+1};g).$$
Note that
$Z(\phi_1-\tau\phi_{m+n+1}^d,\ldots,\phi_{m+n}-\tau\phi_{m+n+1}^{d^{m+n}},\phi_{m+n+1})=Z\times
\A^1$ in $W\times\bb A^1$. Thus the tuple $h^d(c)$ is an element in
$\Fr_n(U\times\bb A^1,X\times T^{n+1})$ whose support equals
$Z\times \A^1$.

\begin{remark}\label{r:quasi-finite}{\rm
Let $c=(Z,W,\phi;g)\in \Fr_m(U,X\times T^{n+1})$ and let $\pi:
W\to U$ be the composite map $U\xleftarrow{pr_U} U\times
\A^{m}\leftarrow W$. Then the map $(\phi,\pi)\colon
W\to\A^{m+n+1}\times U$ is quasi-finite over $0\times U$. Hence it
is quasi-finite over some Zariski open neighborhood $V$ of $0\times
U$. Then $W'=(\phi,\pi)^{-1}(V)$ is a Zariski open neighborhood of
$Z$ in $W$. Replacing $W$ by $W'$ if necessary, we may and shall
always assume in what follows that $(\phi,\pi): W\to
\A^{m+n+1}\times U$ is quasi-finite.

}\end{remark}

\begin{lemma}\label{l:homotopy_properties}
If $U\in AffSm/k$ then the following statements are true:
\begin{itemize}
\item [(i)] if $c \in \Fr^{<d}_m(U,X\times T^{n+1})$, then $t_d(c) \in \Fr^{qf}_m(U,X\times T^{n+1})$;
\item [(ii)] if $c \in (\Fr^{qf}_m)^{<d}(U,X\times T^{n+1})$, then $h^d(c) \in \Fr^{qf}_m(U\times \A^1,X\times
T^{n+1})$.
\end{itemize}
\end{lemma}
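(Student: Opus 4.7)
Both parts reduce to the same mechanism: use a low-degree $c$-defining polynomial to witness that, on the relevant vanishing locus, the function $\phi_{m+n+1}$ satisfies a nontrivial one-variable polynomial equation, then combine with the quasi-finiteness of $(\phi,\pi)\colon W\to\A^{m+n+1}\times U$ (Remark~\ref{r:quasi-finite}) to conclude.

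First I would handle (i). Given $c=(Z,W,\phi_1,\ldots,\phi_{m+n+1};g)\in\Fr^{<d}_m(U,X_+\wedge T^{n+1})$, fix a $c$-defining set $F_1,\ldots,F_r\in k[\A^{m+n+1}\times U]$ with $\deg F_i<d$ and fix $u\in U$. Denote by $W^0_u$ the fiber over $u$ of $Z(\phi_1-\phi_{m+n+1}^d,\ldots,\phi_{m+n}-\phi_{m+n+1}^{d^{m+n}})$; we must show $W^0_u$ is finite. Choose $i$ such that $F_i(-,u)\neq 0$ and $\phi(W_u)\subseteq Z(F_i(-,u))$. On $W^0_u$ one has $\phi_j=\phi_{m+n+1}^{d^j}$ for $j=1,\ldots,m+n$, hence
\[
0=F_i(\phi_1,\ldots,\phi_{m+n+1})\big|_{W^0_u}=P(\phi_{m+n+1})\big|_{W^0_u},\qquad P(t):=F_i(t^d,t^{d^2},\ldots,t^{d^{m+n}},t).
\]
By Lemma~\ref{lem:tnonzero} the polynomial $P\in k(u)[t]$ is nonzero, so $\phi_{m+n+1}$ takes only finitely many values on $W^0_u$. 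Since $(\phi,\pi)|_{W_u}\colon W_u\to\A^{m+n+1}_u$ is quasi-finite by Remark~\ref{r:quasi-finite}, the preimage of any finite set is finite, so $W^0_u$ is finite. Hence $t_d(c)\in\Fr^{qf}_m(U,X_+\wedge T^{n+1})$.

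Next I would handle (ii) by the same method over the base $U\times\A^1$. By Corollary~\ref{cor:existdef}, the pullbacks $pr_U^*(F_1),\ldots,pr_U^*(F_r)$ form a $pr_U^*(c)$-defining set of degree $<d$. Fix $(u,s_0)\in U\times\A^1$ and let $W^0_{(u,s_0)}$ denote the fiber of the vanishing locus of the first $m+n$ functions of $h^d(c)$. If $s_0=0$, the equations reduce to $\phi_1=\cdots=\phi_{m+n}=0$, so the fiber is finite by the hypothesis $c\in(\Fr^{qf}_m)^{<d}$. If $s_0\neq 0$, then on $W^0_{(u,s_0)}$ one has $\phi_j=s_0\phi_{m+n+1}^{d^j}$, and the same substitution yields
\[
0=F_i(\phi_1,\ldots,\phi_{m+n+1})\big|_{W^0_{(u,s_0)}}=Q(\phi_{m+n+1})\big|_{W^0_{(u,s_0)}},\qquad Q(t):=F_i(s_0t^d,\ldots,s_0t^{d^{m+n}},t),
\]
and $Q\neq 0$ by the ``moreover'' clause of Lemma~\ref{lem:tnonzero}. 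Again $\phi_{m+n+1}$ takes only finitely many values on the fiber and the quasi-finiteness of $(\phi,\pi)$ finishes the argument.

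There is no real obstacle — once the indices are lined up, the proof is a direct application of Lemma~\ref{lem:tnonzero} together with Remark~\ref{r:quasi-finite}. The only point to be slightly careful about is that Definition~\ref{def:FrY/Y-S} asks quasi-finiteness of the vanishing locus of the first $m+n$ (not all $m+n+1$) functions, so that the surviving function $\phi_{m+n+1}=f$ is precisely what provides the single variable $t$ in the application of Lemma~\ref{lem:tnonzero}.
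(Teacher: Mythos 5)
Your proof is correct and follows essentially the same line of reasoning as the paper's: in both parts you combine a low-degree $c$-defining polynomial with Lemma~\ref{lem:tnonzero} to see that, on the relevant vanishing locus, $\phi_{m+n+1}$ satisfies a nonzero one-variable polynomial equation, and then invoke the quasi-finiteness of $(\phi,\pi)$ from Remark~\ref{r:quasi-finite} to conclude finiteness of the fibers. The only cosmetic difference is that the paper packages the substitution $\phi_j\mapsto\phi_{m+n+1}^{d^j}$ (resp.\ $s\phi_{m+n+1}^{d^j}$) geometrically via the closed embedding $\theta$ (resp.\ $\theta_a$) and the curve $C$ (resp.\ $C_a$), whereas you write the resulting one-variable equation directly; the two formulations are interchangeable.
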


\begin{proof}
Prove the first assertion. Let $c=(Z,W,\phi;g)\in \Fr_m(U,X\times
T^{n+1}) \in \Fr^{<d}_m(U,X\times T^{n+1})$. Let $F_1,\ldots F_r
\in k[\A^{m+n+1}_U]$  be a $c$-defining set with $\deg F_i<d$ for
all $i=1,\ldots,r$. We must check that $t_d(c)$ is in
$\Fr^{qf}_m(U,X\times T^{n+1})$. So, take
$$Y=Z(\phi_1-\phi_{m+n+1}^d,\phi_2-\phi_{m+n+1}^{d^2},\ldots,\phi_{m+n}-\phi_{m+n+1}^{d^{m+n}})\subset W.$$
Let $\pi:W\to U$ be the composite map $U\xleftarrow{pr_U} U\times
\A^{m}\leftarrow W$ as in Remark \ref{r:quasi-finite}.
We must check that $\pi|_Y: Y\to U$ is quasi-finite.
So we must check that for any point $u\in U$ the fiber $Y(u)$ of $Y$
over $u$ is finite. Let $\theta: \A^1\to \A^{m+n+1}$ be a morphism
taking a point $t$ to the point $(t^d,t^{d^2},...,t^{d^{m+n}},t)$.
It is a closed embedding with the image $C=\theta(\A^1)$.

By Remark \ref{r:quasi-finite} the morphism $\psi=(\phi,\pi):W\to
\A^{m+n+1}\times U$ is quasi-finite. Given a point $u\in U$ there is
a polynomial $F$ from the $c$-defining set such that $F(-,u)$ is
non-zero and its vanishing locus $Z(F(-,u))$ in $\A^{m+n+1}_u$
contains $\psi(W(u))$. Clearly, $Y(u)$ is contained in the set
   $$\psi^{-1}(Z(F(-,u))\cap C).$$
The set $Z(F(-,u))\cap C$ is in a bijection with the vanishing locus
of the polynomial $F(t^d,t^{d^2},...,t^{d^{m+n}},t)$ on the line
$\A^1$ with the coordinate $t$. Thus by Lemma~\ref{lem:tnonzero} the
set $Z(F(-,u))\cap C$ is finite, and hence so is $Y(u)$. The first
assertion is proved.

Let us verify the second assertion. Let $c=(Z,W,\phi;g) \in
(\Fr^{qf}_m)^{<d}(U,X\times T^{n+1})$ and let $F_1,\ldots F_r \in
k[\A^{m+n+1}_U]$  be a $c$-defining set with $\deg F_i<d$ for all
$i=1,\ldots,r$. We must check that $h^d(c)$ is in
$\Fr^{qf}_m(U\times \A^1,X\times T^{n+1})$. So, take
   $$Y_{\tau}=Z(\phi_1-\tau\phi_{m+n+1}^d,\phi_2-\tau\phi_{m+n+1}^{d^2},\ldots,\phi_{m+n}-\tau\phi_{m+n+1}^{d^{m+n}})\subset W\times \A^1,$$
where $\tau$ is the coordinate on the additional factor $\A^1$.
Let $\pi: W\to U$ be as above in this proof. Consider the map
$\Pi=\pi\times id_{\A^1}: W\times \A^1\to U\times \A^1$.
We must check that
$\Pi|_{Y_{\tau}}: Y_{\tau}\to U\times \A^1$
is quasi-finite. So we must
check that for any point $v\in U\times \A^1$ the fiber $Y_{\tau}(v)$ of
$Y_{\tau}$ over $v$ is finite. Replacing the base field $k$ by its
algebraic closure $\bar k$, we may assume that any point $v\in
U\times \A^1$ is of the form $(u,a)$ with $a\in \bar k$. So, we must
check that for any point $(u,a)\in U\times \A^1$ the fiber $Y_a(u)$
of $Y_{\tau}$ over $(u,a)$ is finite.

Given $0\neq a\in \bar k$, let $\theta_a: \A^1\to \A^{m+n+1}$ be a
morphism taking a point $t$ to the point
$(at^d,at^{d^2},...,at^{d^{m+n}},t)$. It is a closed embedding with
the image $C_a=\theta_a(\A^1)$. For $a=0$ let $\theta_0: \A^1\to
\A^{m+n+1}$ be the morphism taking a point $t$ to $(0,0,...,0,t)$.
It is a closed embedding with the image $C_0=\theta_0(\A^1)$. It is
the last coordinate line $\A^1_{m+n+1}$ in $\A^{m+n+1}$.

By Remark~\ref{r:quasi-finite} the morphism $\psi=(\phi,\pi):W\to
\A^{m+n+1}\times U$ is quasi-finite. Given a point $u\in U$, there
is a polynomial $F$ from the $c$-defining set such that $F(-,u)$ is
non-zero and its vanishing locus $Z(F(-,u))$ in $\A^{m+n+1}_u$
contains $\psi(W(u))$. For a given $0\neq a\in \bar k$ the set
$Y_a(u)$ is contained in the set
   $$\psi^{-1}(Z(F(-,u))\cap C_a).$$
The set $Z(F(-,u))\cap C_a)$ is in a bijection with the vanishing
locus of the polynomial $F(at^d,at^{d^2},...,at^{d^{m+n}},t)$ on the
line $\A^1$ with the coordinate $t$. Thus by
Lemma~\ref{lem:tnonzero} the set $Z(F(-,u))\cap C_a)$ is finite in
this case, and hence so is $Y_a(u)$.

For $a=0$, the set $Y_0(u)$ coincides with the closed subset
$Z(\phi_1,...,\phi_{m+n})$ in $W$. It is quasi-finite over $U$,
because $c=(Z,W,\phi;g) \in (\Fr^{qf}_m)^{<d}(U,X\times T^{n+1})$.
The second assertion is proved.
\end{proof}

Lemma~\ref{l:homotopy_properties} implies that the assignment
$c\mapsto t_d(c)$ gives a morphism of presheaves of pointed sets
   $$t_d: \Fr^{<d}_m(-,X\times T^{n+1}) \to \Fr^{qf}_m(-,X\times T^{n+1})$$
on $AffSm/k$. It also implies that the assignment $c\mapsto h^d(c)$
gives a morphism of presheaves of pointed sets
   $$h_d: \Fr^{<d}_m(-,X\times T^{n+1}) \to \Fr^{qf}_m(-\times \A^1,X\times T^{n+1})$$
on $AffSm/k$. Finally, the assignment $c\mapsto h_d(c)$ gives a
morphism of presheaves of pointed sets
   $$h_{d}^{qf}:(\Fr^{qf}_m)^{<d}(-,X\times T^{n+1}) \to \Fr^{qf}_m(-\times \A^1,X\times T^{n+1})$$
on $AffSm/k$.

Consider a diagram
$$\xymatrix{\Fr^{<d}_m(-,X\times T^{n+1}) \ar[rrr]^(.5){i_d}  \ar[rrrd]^{t_d} &&& \Fr_m(-,X\times T^{n+1}) \\
              (\Fr^{qf}_m)^{<d}(-,X\times T^{n+1}) \ar[u]^{in_d}  \ar[rrr]^(.5){j_d} &&& \Fr^{qf}_m(-,X\times T^{n+1}) \ar[u]^{in} }$$
of presheaves of pointed sets on $AffSm/k$.
Lemma~\ref{l:homotopy_properties} shows that $h_d:
\Fr^{<d}_m(-,X\times T^{n+1}) \to
\underline{\Hom}(\A^1,\Fr_m(-,X\times T^{n+1}))$ is an
$\A^1$-homotopy between the morphisms $in\circ t_d$ and $i_d$. It
also shows that $h_{d}^{qf}: (\Fr^{qf}_m)^{<d}(-,X\times T^{n+1})
\to \underline{\Hom}(\A^1,\Fr^{qf}_m(-,X\times T^{n+1}))$ is an
$\A^1$-homotopy between the morphisms $t_d\circ in_d$ and $j_d$.

Applying the free abelian group functor to the morphisms
$t_d$,$i_d$,$j_d$,$in_d$ and $in$, we get certain morphisms between
presheaves of abelian groups as well as two $\A^1$-homotopies
(namely, $\mathbb Z [t_d]$, $\mathbb Z [i_d]$, $\mathbb Z [j_d]$,
$\mathbb Z [in_d]$, $\mathbb Z [in]$, $\mathbb Z [h_d]$ and $\mathbb
Z [h_d^{qf}]$). Note that these morphisms and these two homotopies
respect the additivity relations. Thus following
Definition~\ref{d:filtration_on_ZF_qf_m}, we finally get morphisms
$I_d$,$J_d$,$In_d$,$In$ and a morphism of presheaves
   $$T_d: \ZF^{<d}_m(-,X\times T^{n+1}) \to \ZF^{qf}_m(-,X\times T^{n+1}),$$
and two $\A^1$-homotopies $H_d$, $H_d^{qf}$. In this way we get a
diagram
   $$\xymatrix{\ZF^{<d}_m(-,X\times T^{n+1}) \ar[rrr]^(.4){I_d}  \ar[rrrd]^{T_d} &&& \ZF_m(-,X\times T^{n+1}) \\
              (\ZF^{qf}_m)^{<d}(-,X\times T^{n+1}) \ar[u]^{In_d}  \ar[rrr]^(.4){J_d} &&& \ZF^{qf}_m(-,X\times T^{n+1}) \ar[u]^{In} }$$
of presheaves of abelian groups on $AffSm/k$.

We document these arguments as follows.

\begin{lemma}\label{l:two_homotopies}
The $\A^1$-homotopy $h_d$ yields an $\A^1$-homotopy
$$H_d: \ZF^{<d}_m(-,X\times T^{n+1}) \to \underline{\Hom}(\A^1,\ZF_m(-,X\times T^{n+1}))$$
between $In\circ T_d$ and $I_d$. The $\A^1$-homotopy $h_{d}^{qf}$ yields an $\A^1$-homotopy
   $$H_{d}^{qf}: (\ZF^{qf}_m)^{<d}(-,X\times T^{n+1}) \to \underline{\Hom}(\A^1,\ZF^{qf}_m(-,X\times T^{n+1}))$$
between $T_d\circ In_d$ and $J_d$.
\end{lemma}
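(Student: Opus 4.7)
My plan is to upgrade the set-level $\A^1$-homotopies $h^d$ and $h_d^{qf}$ from Lemma~\ref{l:homotopy_properties} to the linear setting by applying the free-abelian-group functor $\bb Z[-]$ and then checking compatibility with the additivity relations in Definition~\ref{d:filtration_on_ZF_qf_m}. Once the descent through the quotient is verified, the endpoint conditions at the two points of $\A^1$ are read off immediately from the defining formula and give the claimed pairs of morphisms.

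The only substantive step is the descent to the additivity quotient. Take $c=(Z_1\sqcup Z_2, W,\phi;g)\in\Fr^{<d}_m(U,X_+\wedge T^{n+1})$. The concluding observation in Definition~\ref{d:defining_set} says that any $c$-defining family with all degrees strictly less than $d$ remains defining for each of $(Z_1,W\setminus Z_2,\phi|_{W\setminus Z_2};g|_{W\setminus Z_2})$ and $(Z_2,W\setminus Z_1,\phi|_{W\setminus Z_1};g|_{W\setminus Z_1})$, so both summands lie in $\Fr^{<d}_m$ as well. Moreover the explicit formula
\[
h^d(c)=(Z\times\A^1,W\times\A^1,\phi_1-s\phi_{m+n+1}^d,\ldots,\phi_{m+n}-s\phi_{m+n+1}^{d^{m+n}},\phi_{m+n+1};g)
\]
is compatible with disjoint decomposition of the support since $(Z_1\sqcup Z_2)\times\A^1=(Z_1\times\A^1)\sqcup(Z_2\times\A^1)$ and $(W\times\A^1)\setminus(Z_i\times\A^1)=(W\setminus Z_i)\times\A^1$; applying the additivity relation in $\ZF_m(U\times\A^1,X_+\wedge T^{n+1})$ therefore yields
\[
h^d(c)=h^d(Z_1,W\setminus Z_2,\phi|_{W\setminus Z_2};g|_{W\setminus Z_2})+h^d(Z_2,W\setminus Z_1,\phi|_{W\setminus Z_1};g|_{W\setminus Z_1}).
\]
This shows that $\bb Z[h^d]$ passes to the quotient and defines $H_d$. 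The identical reasoning with $h_d^{qf}$ in place of $h^d$ produces $H_d^{qf}$, using part (ii) rather than part (i) of Lemma~\ref{l:homotopy_properties} to keep values in $\ZF^{qf}_m$.

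Finally, evaluating the explicit formula at $s=0$ gives $h^d(c)=c$ and $h_d^{qf}(c)=c$, while at $s=1$ it gives $h^d(c)=t_d(c)$ and $h_d^{qf}(c)=t_d(in_d(c))$; after linearization this identifies $H_d$ and $H_d^{qf}$ as $\A^1$-homotopies between the asserted pairs of morphisms. The argument is thus a purely formal upgrade of the set-level data to the linear setting; the only content beyond Lemma~\ref{l:homotopy_properties} is the additivity check above, which I do not expect to be a genuine obstacle.
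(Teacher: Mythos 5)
Your proposal is correct and follows the paper's own argument almost verbatim: apply the free abelian group functor to the set-level morphisms and homotopies of Lemma~\ref{l:homotopy_properties}, verify that everything respects the additivity relations of Definition~\ref{d:filtration_on_ZF_qf_m} (using the observation at the end of Definition~\ref{d:defining_set} to keep degree bounds under disjoint decomposition of the support), pass to the quotient, and read off the endpoints from the explicit formula for $h^d$. Your additivity check and your identifications at $s=0$ and $s=1$ match what the paper leaves as "note that these morphisms and these two homotopies respect the additivity relations."
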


\begin{proposition}\label{ZF_qf_and_ZF}
For any integers $m,n\geq 0$ the morphism
$$In: C_*\ZF^{qf}_m(-,X\times T^{n+1}) \to C_*\ZF_m(-,X\times T^{n+1})$$
of complexes of presheaves of Abelian groups is a section-wise quasi-isomorphism
on the category $AffSm/k$.
\end{proposition}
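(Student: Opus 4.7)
The plan is to apply the Suslin complex $C_*$ to the diagram of Lemma~\ref{l:two_homotopies} and then pass to the colimit over $d$, exploiting that the two filtrations in question are exhausting by Corollary~\ref{cor:exhauting_2}.

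First, I would use that the functor $C_*$ sends $\A^1$-homotopies to chain homotopies of complexes of presheaves. Consequently the two $\A^1$-homotopies $H_d$ and $H_d^{qf}$ of Lemma~\ref{l:two_homotopies} yield chain-homotopy identities
$$C_*(In)\circ C_*(T_d)\simeq C_*(I_d),\qquad C_*(T_d)\circ C_*(In_d)\simeq C_*(J_d),$$
and passing to homology of sections over a fixed $U\in AffSm/k$ these become the identities
$$H_*(C_*(In))\circ H_*(C_*(T_d))=H_*(C_*(I_d)),\quad H_*(C_*(T_d))\circ H_*(C_*(In_d))=H_*(C_*(J_d)).$$
In particular, on each $d$-th piece of the filtration, $T_d$ serves as a one-sided homotopy section of $In$.

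Second, I would invoke Corollary~\ref{cor:exhauting_2} to obtain the section-wise colimit identifications
$$C_*\ZF_m(U,X_+\wedge T^{n+1})=\colim_d C_*\ZF_m^{<d}(U,X_+\wedge T^{n+1}),$$
$$C_*\ZF^{qf}_m(U,X_+\wedge T^{n+1})=\colim_d C_*(\ZF^{qf}_m)^{<d}(U,X_+\wedge T^{n+1}).$$
Since homology commutes with filtered colimits of abelian groups, every class in either colimit is the image of a class from some finite stage, and every class that becomes zero in the colimit already becomes zero at some (possibly larger) finite stage.

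Third, a standard colimit diagram chase then delivers both halves of the required quasi-isomorphism. For surjectivity, any $\alpha\in H_*C_*\ZF_m(U)$ equals $H_*(C_*(I_d))(\alpha_d)$ for some $d$ and some $\alpha_d$, so $\alpha=H_*(C_*(In))(H_*(C_*(T_d))(\alpha_d))$. For injectivity, suppose $\beta=H_*(C_*(J_d))(\beta_d)$ is killed by $H_*(C_*(In))$; the commutativity $In\circ J_d=I_d\circ In_d$ forces $H_*(C_*(In_d))(\beta_d)$ to vanish in $H_*C_*\ZF_m(U)$, hence it already vanishes in $H_*C_*\ZF_m^{<d'}(U)$ for some $d'\geq d$. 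Pushing $\beta_d$ forward to $H_*C_*(\ZF^{qf}_m)^{<d'}(U)$ along the transition map and then applying $H_*(C_*(T_{d'}))$, the second homotopy identity together with the naturality of the transition maps collapses $\beta$ to zero in $H_*C_*\ZF^{qf}_m(U)$. The step most requiring care is this injectivity portion: the homotopy section $T_d$ depends genuinely on $d$, so one cannot invert $In$ up to homotopy in a single step and one must track the class through a larger filtration level $d'\geq d$. All the geometrically substantive input has already been absorbed into Lemma~\ref{l:two_homotopies}, which ultimately rests on the elementary moving device Lemma~\ref{lem:tnonzero}; the remaining argument is essentially formal.
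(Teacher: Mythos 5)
Your proposal is correct and takes precisely the same route as the paper: apply $C_*$ to the two $\A^1$-homotopies of Lemma~\ref{l:two_homotopies} to obtain chain-homotopy identities, then pass to the filtered colimit over $d$ supplied by Corollary~\ref{cor:exhauting_2}. The paper states this in one sentence and leaves the colimit diagram chase implicit; you have simply spelled it out, correctly handling the subtle point that the homotopy section $T_d$ depends on $d$ so the injectivity step must pass to a larger level $d'\geq d$.
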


\begin{proof}[Proof of Proposition \ref{ZF_qf_and_ZF}]
The functor $C_*$ converts $\A^1$-homotopies to naive simplicial homotopies.
Now the proposition follows from Lemma \ref{l:two_homotopies}
and Corollary~\ref{cor:exhauting_2}.

In more detail, for each integer $r\geq 0$ and each $k$-scheme $U\in AffSm/k$,
write $\mathbb H_r(U)$ for the $r$th homology group of the complex
$C_*\ZF_m(U,X\times T^{n+1})$ and write $\mathbb H^{qf}_r(U)$ for the $r$th homology group
of the complex $C_*\ZF^{qf}_m(U,X\times T^{n+1})$.
In addition, if $d>0$ is an integer, then write
$\mathbb H^{(<d)}_r(U)$ for the $r$th homology group of $C_*\ZF^{<d}_m(U,X\times T^{n+1})$
and $(\mathbb H^{qf})^{(<d)}_r(U)$ for the $r$th homology group of $C_*(\ZF^{qf}_m)^{<d}(U,X\times T^{n+1})$.
We choose the symbol $\mathbb H$ to avoid any confusion with
$\A^1$-homotopies from Lemma \ref{l:two_homotopies}. First, we have a commutative diagram of groups
   \begin{equation}\label{aaa}
    \xymatrix{\colim_{d} \mathbb H^{(<d)}_r(U) \ar[rrr]^(.5){I}  &&& \mathbb H_r(U) \\
              \colim_{d} (\mathbb H^{qf})^{(<d)}_r(U) \ar[u]^{\colim_{d} (In_d)_*}  \ar[rrr]^(.5){J} &&& \mathbb H^{qf}_r(U) \ar[u]^{In_*} }
    \end{equation}
in which the arrows $I$ and $J$ are isomorphisms by Lemma \ref{l:exhauting_1}. Second, for each integer $d>0$ we have
a commutative diagram of groups
    \begin{equation}\label{bbb}
       \xymatrix{\mathbb H^{(<d)}_r(U) \ar[rrr]^(.5){(I_d)_*} \ar[rrrd]^{(T_d)_*}  &&& \mathbb H_r(U) \\
              (\mathbb H^{qf})^{(<d)}_r(U) \ar[u]^{(In_d)_*}  \ar[rrr]^(.5){(J_d)_*} &&& \mathbb H^{qf}_r(U) \ar[u]^{In_*} }
    \end{equation}
Since the arrows $I$ and $J$ from the commutative diagram~\eqref{aaa} are isomorphisms and the
upper (respectively, lower) triangle of the diagram~\eqref{bbb} is commutative, it follows that $In_*$ is surjective (respectively, injective).
We see that $In_*$ is an isomorphism. The proposition is proved.
\end{proof}

We are now in a position to prove Theorem~\ref{p:moving}

\begin{proof}[Proof of Theorem~\ref{p:moving}]
By Definitions~\ref{def:ZF_qf} and \ref{stab} the complexes
$C_*\ZF^{qf}(-,X\times T^{n+1}))$ and $C_*\ZF(-,X\times
T^{n+1}))$ are the colimits of complexes $C_*\ZF^{qf}_m(-,X\times
T^{n+1}))$ and $C_*\ZF_m(-,X\times T^{n+1}))$ over the suspension
morphisms $\Sigma$. The morphisms $In$ commute with the suspension
morphisms $\Sigma$, i.e. $\Sigma \circ In=In\circ \Sigma:
C_*\ZF^{qf}_m(-,X\times T^{n+1})\to C_*\ZF_{m+1}(-,X\times
T^{n+1})$. Proposition~\ref{ZF_qf_and_ZF} now completes the proof.
\end{proof}

\section{Proof of Theorem~\ref{cone}}\label{sec:cone}

In this section Theorem~\ref{cone} is proven, which is the main result of the paper. To prove the theorem, we
need further definitions as well as Theorem~\ref{ZM_fr_and_LM_fr} stated in the Introduction.
Let $\Gamma^{\op}$ be the category of finite pointed sets and
pointed maps. In what follows we shall identify $\Gamma^{\op}$ with a full
subcategory of $\Fr_0(k)$. The identification sends
$(K,*)\in\Gamma^{\op}$ to the non-pointed scheme $\spec k\sqcup\ldots\sqcup\spec k$,
where the coproduct is indexed by the set $K'=K\setminus *$.
In turn, $\Fr_0(k)$ is a full subcategory of $SmOp(\Fr_0(k))$. Thus $\Gamma^{\op}$ is a full
subcategory of $SmOp(\Fr_0(k))$. For each object $(Y,Y-S)$ in $SmOp(\Fr_0(k))$ and each finite pointed set $(K,*)$,
we write $(Y,Y-S)\otimes K$ to denote $(Y\times K',(Y-S)\times K')$ in $SmOp(\Fr_0(k))$.
In particular, if $Y\in \Fr_0(k)$, then $Y\otimes K=Y\times K'=Y\sqcup\ldots\sqcup Y$ with the
coproduct indexed by the elements of $K'$. This notation is consistent with that in~\cite[Section 5]{GP1}.

Following Notation~\ref{para}, let us define several $\Gamma$-spaces.
Namely, if $U,X\in \Fr_0(k)$ and $m\geq 0$ is an integer, consider the following $\Gamma$-spaces:
\begin{equation}\label{eq:Fr_n_U_X_times_T_m_otimes_K}
(K,*)\mapsto \Fr_n(U,(X\times T^m)\otimes K),
\end{equation}
\begin{equation*}\label{eq:ZF_n_U_X_times_T_m_otimes_K}
(K,*)\mapsto \ZF_n(U,(X\times T^m)\otimes K),
\end{equation*}
\begin{equation}\label{eq:F_n_U_X_times_T_m_otimes_K}
(K,*)\mapsto \F_n(U,(X\times T^m)\otimes K),
\end{equation}
where the right hand side pointed sets correspond to the values of the relevant
functors at the pair $(X\times(\bb A^1,\bb G_m)^{\wedge m})\otimes K=(X\times\A^m,X\times(\A^m-\{0\}))\otimes K$.
By Definition~\ref{F_m_U_Y/(Y-S)} the set $\F_n(U,(X\times T^m)\otimes K)-0_n$
is a free basis of the abelian group $\ZF_n(U,(X\times T^m)\otimes K)$.

Also, consider $\Gamma$-spaces
\begin{equation*}\label{eq:ZFr_n_U_X_times_T_m_otimes_K}
(K,*)\mapsto \ZFr_*(U,(X\times T^n)\otimes K),
\end{equation*}
\begin{equation*}
(K,*)\mapsto \ZF_*(U,(X\times T^n)\otimes K).
\end{equation*}
The associated presheaves of Segal $S^1$-spectra
will be denoted by $\bb Z\Fr_*^{S^1}(X\times T^n)$ and $EM(\ZF_*(-,X\times T^n))$, respectively. Thus,
   $$\bb Z\Fr_*^{S^1}(X\times T^n)=(\ZFr_*(-,X\times T^n),\ZFr_*(-,(X\times T^n)\otimes S^1),\ldots).$$
and
   $$EM(\ZF_*(-,X\times T^n))=(\ZF_*(-,X\times T^n),\ZF_*(-,(X\times T^n)\otimes S^1),\ldots).$$
The equality $\ZF_*(-,(X\sqcup X')\times T^n)=\ZF_*(-,X\times
T^n)\oplus \ZF_*(-,X'\times T^n)$ implies that the $\Gamma$-space
$(K,*)\mapsto \ZF_*(U,(X\times T^n)\otimes K)$ is fully determined
by the abelian group $\ZF_*(U,X\times T^n)$. Hence
$EM(\ZF_*(-,X\times T^n))$ is the Eilenberg--Mac~Lane spectrum for
$\ZF_*(-,X\times T^n)$.

The morphism of $\Gamma$-spaces $[(K,*)\mapsto
\ZFr_*(-,(X\times T^n)\otimes K)]\to [(K,*)\mapsto
\ZF_*(-,(X\times T^n)\otimes K)]$ induces a morphism of framed
$S^1$-spectra
   $$\lambda_{X\times T^n}:\bb Z\Fr_*^{S^1}(X\times T^n)\to EM(\ZF_*(-,X\times T^n)).$$
Also, denote by $\bb ZM_{fr}(X\times T^n)$, $X\in Sm/k$, the Segal
$S^1$-spectrum
   \begin{equation}\label{eq:ZMFr}
   (C_*\ZFr(-,X\times T^n),C_*\ZFr(-,(X\times T^n)\otimes S^1),\ldots).
   \end{equation}
Let $LM_{fr}(X\times T^n)$ be the Segal $S^1$-spectrum
   $$EM(\ZF(\Delta^\bullet\times-,X\times T^n))=(\ZF(\Delta^\bullet\times-,X\times T^n),\ZF(\Delta^\bullet\times-,(X\times T^n)\otimes S^1),\ldots).$$
The above arguments show that $LM_{fr}(X\times T^n)$ is the
Eilenberg--Mac~Lane spectrum associated with the complex
$\ZF(\Delta^\bullet\times-,X\times T^n)$.
The $\Gamma$-space morphism
$$[(K,*)\mapsto
\ZFr(\Delta^\bullet\times-,(X\times T^n)\otimes K)]\to [(K,*)\mapsto
\ZF(\Delta^\bullet\times-,(X\times T^n)\otimes K)]$$
induces a morphism of framed
$S^1$-spectra
   $$l_{X\times T^n}: \bb ZM_{fr}(X\times T^n)\to LM_{fr}(X\times T^n).$$
Note that stable homotopy groups of $LM_{fr}(X\times
T^n)=EM(\ZF(\Delta^\bullet\times-,X\times T^n))$ are equal to
homology groups of the complex $\ZF(\Delta^\bullet\times-,X\times
T^n)$. By~\cite[\S II.6.2]{Sch} homotopy groups $\pi_*(\bb
ZM_{fr}(X\times T^n)(U))$ of the $S^1$-spectrum
 $\bb ZM_{fr}(X\times T^n)$ evaluated at
$U\in \Fr_0(k)$ are homology groups $H_*(M_{fr}(X\times T^n)(U))$ of
$M_{fr}(X\times T^n)(U)$.

For the convenience of the reader we recall
Theorem~\ref{ZM_fr_and_LM_fr} proved in Appendix~B. It computes, in
particular, homology of the framed motive $M_{fr}(X\times T^n)$ of
the relative motivic sphere $X\times\A^n/(X\times(\A^n-\{0\}))$.

\begin{theorem*}
For any integer $m\geq 0$, the natural morphism of framed
$S^1$-spectra
   $$\lambda_{X\times T^m}:\bb Z\Fr_*^{S^1}(X\times T^m)\to EM(\ZF_*(-,X\times T^m))$$
is a schemewise stable equivalence. Moreover,
the natural morphism of framed $S^1$-spectra
   $$l_{X\times T^m}: \bb ZM_{fr}(X\times T^m)\to LM_{fr}(X\times T^m)$$
is a schemewise stable equivalence. In particular, for any $U\in
Sm/k$ one has
   $$\pi_*(\bb ZM_{fr}(X\times T^m)(U))=H_*(\ZF(\Delta^\bullet\times U,X\times T^m))=H_*(C_*\bb Z\F(U,X\times T^m)).$$
\end{theorem*}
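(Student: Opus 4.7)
The plan is to prove part (1) directly and deduce (2) and (3) from it by formal arguments.

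For part (1), note first that the target $EM(\ZF_*(-, X_+\wedge T^m))$ is, by construction, the Segal $S^1$-spectrum associated to the $\Gamma$-space $\mathcal B\colon K\mapsto \ZF_*(-, (X_+\wedge T^m)\otimes K)$. This $\Gamma$-space is \emph{linear} in $K$: the additivity relation built into Definition~\ref{stab} yields a natural splitting $\ZF_*(-, (X\sqcup X')_+\wedge T^m)=\ZF_*(-, X_+\wedge T^m)\oplus\ZF_*(-, X'_+\wedge T^m)$, which exhibits $\mathcal B$ as the linear $\Gamma$-space determined by the abelian group presheaf $\ZF_*(-, X_+\wedge T^m)$. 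It therefore suffices to show that the morphism of $\Gamma$-spaces of abelian group presheaves $\mathcal A\to\mathcal B$, where $\mathcal A\colon K\mapsto \ZFr_*(-, (X_+\wedge T^m)\otimes K)$, is a schemewise stable equivalence of $\Gamma$-spaces in the Bousfield--Friedlander sense~\cite{BF}.

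To do this, I would compute the normalized chain complexes associated with $\mathcal A$ and $\mathcal B$ by restriction along $\Delta^{op}\hookrightarrow\Gamma^{op}$; stable equivalence of $\Gamma$-spaces of abelian groups is detected by quasi-isomorphism of these complexes. The complex for $\mathcal B$ is concentrated in degree $0$ (where it equals $\ZF_*(-, X_+\wedge T^m)$) by linearity, so the task is to show the same for $\mathcal A$ up to quasi-isomorphism. The key geometric input is that any framed correspondence $(Z,W,\phi;g)\in\Fr_n(U, (X^{\sqcup r})_+\wedge T^m)$ decomposes uniquely as $Z=Z_1\sqcup\cdots\sqcup Z_r$ via the preimages of the connected components of $X^{\sqcup r}$ under $g$, and the additivity relation in $\ZF_*$ is precisely the one identifying such a correspondence with the sum of its restrictions to components. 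Coupled with an explicit filtration of $\ZFr_*(-, (X\otimes K)_+\wedge T^m)$ by support combinatorics, this observation produces the null-homotopies needed to kill homology in degrees $\geq 1$; the argument is analogous in spirit to the contractibility of the bar resolution of a free commutative monoid above degree $0$.

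For part (2), applying the Suslin complex $C_*$ termwise to $\lambda_{X_+\wedge T^m}$ yields the morphism $l_{X\times T^m}$, and since $C_*$ is a homotopy colimit over $\Delta^{op}$ it preserves schemewise stable equivalences of $S^1$-spectra. For part (3), the target $LM_{fr}(X\times T^m)=EM(\ZF(\Delta^\bullet\times-, X_+\wedge T^m))$ is by construction the Eilenberg--Mac~Lane spectrum of the displayed chain complex, so its homotopy groups at $U\in Sm/k$ are $H_*(\ZF(\Delta^\bullet\times U, X_+\wedge T^m))$, which combined with (2) gives the identification with $\pi_*(\bb ZM_{fr}(X\times T^m)(U))$. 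The main obstacle is the normalized-complex calculation in part (1): the geometric decomposition of supports is clean, but the combinatorial and simplicial bookkeeping needed to construct the null-homotopies and match them with the bar-resolution-style contractibility argument is the technical heart of the proof.
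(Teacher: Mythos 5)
Your reduction of parts (2) and (3) to part (1) via the fact that $C_*$ is a realization (homotopy colimit over $\Delta^{op}$) preserving schemewise stable equivalences is correct and matches what the paper does implicitly, and your observation that $\mathcal B\colon K\mapsto\ZF_*(-,(X_+\wedge T^m)\otimes K)$ is the linear $\Gamma$-space determined by $\ZF_*(-,X_+\wedge T^m)$ is exactly the paper's opening move. Where you diverge is in the strategy for (1), and this is where the gap lies.

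You propose to compute the normalized complex of $\mathcal A\colon K\mapsto\ZFr_*(-,(X_+\wedge T^m)\otimes K)$ directly and kill higher homology by filtration and null-homotopies ``in the spirit of the bar resolution of a free commutative monoid.'' This is plausible in outline but is not actually carried out: you name a filtration ``by support combinatorics'' without specifying it, and you concede that the construction of the required null-homotopies is ``the technical heart.'' Two concrete worries. First, the decomposition you highlight -- $Z=Z_1\sqcup\cdots\sqcup Z_r$ according to which summand of $X^{\sqcup r}$ the framing $g$ hits -- does \emph{not} make $K\mapsto\Fr_*(U,(X_+\wedge T^m)\otimes K)$ a special $\Gamma$-set: given the restricted pieces $(Z_i,W\setminus\bigcup_{j\ne i}Z_j,\ldots)$ there is no canonical way to reassemble the original \'etale neighborhood $W$, so there is no inverse to the Segal maps, and the naive analogy with a free commutative monoid breaks. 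Second, the decomposition that actually governs the combinatorics is the decomposition of $Z$ into \emph{connected components}, indexed by $\mathcal A=\F_m(U,X_+\wedge T^m)$ (Definitions~\ref{F_m_U_Y/(Y-S)}, \ref{def:A(Phi)}), and the crucial structural fact you do not invoke is that $\F_m(U,X_+\wedge T^m)\setminus\{0_m\}$ is a \emph{free basis} of $\ZF_m(U,X_+\wedge T^m)$, so that the map $\lambda$ you want to prove is a stable equivalence is in fact a \emph{retraction} of a canonical inclusion.

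The paper exploits precisely this. It builds an injection of $\Gamma$-spaces $inc_m:\Gamma_m(U,X_+\wedge T^m)\hookrightarrow\Gamma^f_{\mathcal A}$ (Lemma~\ref{l:key}), where $\Gamma^f_{\mathcal A}$ is the ``finitely supported maps to $K$'' model whose Segal spectrum is a restricted product of sphere spectra; under $inc_m$, the connected-support $\Gamma$-subspace $\Gamma'_m$ identifies with the wedge $\vee_{a\in\mathcal A-*}\Gamma_a$, while $\Gamma_m$ identifies with the filtered union $\bigcup_{\Phi}\Gamma_{A(\Phi)}$ of finite products. Lemmas~\ref{l:cup_S_A_and_cup_S'_A}--\ref{l:cup_Seg_A_and_cup_Seg'_A} then furnish the stable equivalence $\vee\hookrightarrow\bigcup\prod$ formally, giving Theorem~\ref{p:Fr_and_F}. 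Linearizing, the composite $\ZF^{S^1}_m\hookrightarrow\ZFr^{S^1}_m\xrightarrow{\lambda}\ZF^{S^1}_m$ is the identity and the first arrow is a stable equivalence, whence $\lambda$ is one. If you want to make your normalized-complex route work, you would essentially have to rediscover this wedge-versus-product filtration and show that $\Gamma_m$ sits between the wedge and the product in the way Lemma~\ref{l:key} records; as written, your proposal leaves that step entirely open.
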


As above, we can define the following $\Gamma$-space:
   $$(K,*)\mapsto \Fr(\Delta^\bullet\times-,(X\times T^m)\otimes K).$$
The Segal $S^1$-spectrum associated to this $\Gamma$-space is denoted by $M_{fr}(X\times T^m)$.
By construction,
   $$M_{fr}(X\times T^m)=(C_*\Fr(-,X\times T^m),C_*\Fr(-,(X\times T^m)\otimes S^1),\ldots).$$

We are now in a position to prove the remaining Theorem~\ref{cone}.

\begin{proof}[Proof of Theorem~\ref{cone}]
By Theorem~\ref{th:Main} the map of complexes of presheaves of Abelian groups
\begin{equation}\label{eq: C_*_A1/Gm}
C_*\bb Z\F(X\times T^{n}\times \A^1)/C_*\bb Z\F(X\times
T^{n}\times  \Gm) \to C_*\bb Z\F(X\times T^{n+1})
\end{equation}
is a local quasi-isomorphism. The $S^1$-spectra $LM_{fr}(X\times
T^{n}\times \A^1)$, $LM_{fr}(X\times T^{n}\times  \Gm)$ and
$LM_{fr}(X\times T^{n+1})$ are the Eilenberg--Maclane $S^1$-spectra
of the complexes $C_*\bb Z\F(X\times T^{n}\times \A^1)$, $C_*\bb
Z\F(X\times T^{n}\times \Gm)$ and $C_*\bb Z\F(X\times
T^{n+1})$ respectively. Thus the map
\begin{equation*}\label{eq: LM_fr_A1/Gm}
LM_{fr}(X\times T^{n}\times \A^1)/LM_{fr}(X\times T^{n}\times  \Gm)
\to LM_{fr}(X\times T^{n+1}),
\end{equation*}
induced by~\eqref{eq: C_*_A1/Gm}, is a local stable weak
equivalence, and hence so is the map
   $$\bb ZM_{fr}(X\times T^{n}\times \A^1)/\bb ZM_{fr}(X\times T^{n}\times  \Gm) \to \bb ZM_{fr}(X\times T^{n+1})$$
by Theorem~\ref{ZM_fr_and_LM_fr}. The $S^1$-spectra $M_{fr}(X\times T^{n}\times
\A^1)$, $M_{fr}(X\times T^{n}\times \Gm)$, $M_{fr}(X\times T^{n+1})$
are connected. Now the stable Whitehead theorem~\cite[II.6.30]{Sch}
implies the map
\begin{equation}\label{eq: M_fr_A1/Gm}
M_{fr}(X\times T^{n}\times \A^1)/M_{fr}(X\times T^{n}\times  \Gm) \to M_{fr}(X\times T^{n+1})
\end{equation}
is a local stable weak equivalence. It follows that the sequence of $S^1$-spectra
   $$M_{fr}(X \times T^n \times \mathbb G_m) \to M_{fr}(X \times T^n \times\bb A^1) \to M_{fr}(X \times T^{n+1})$$
is locally a homotopy cofiber sequence. This proves the second assertion of the theorem.

Next, prove that for any ${\ell} \geq 1$ the canonical morphism
\begin{equation}\label{eq:cone_and_T}
M_{fr}(\id_X\times \alpha^{\wedge\ell}): M_{fr}(X\times (\A^1//\mathbb G_m)^{\wedge\ell})\to
M_{fr}(X\times T^{\ell})
\end{equation}
is a local stable weak equivalence. As above, this reduces to verifying that the canonical morphism
of complexes of presheaves
   $$C_*\ZF(\id_X\times \alpha^{\wedge\ell}): C_*\ZF(X\times (\A^1//\mathbb G_m)^{\wedge\ell})\to C_*\ZF(X\times T^{\ell})$$
is locally a quasi-isomorphism.
We verify this using induction by $\ell$. First,
take $\ell=1$.
Let
$\mu: \ZF(\id_X\times \A^1//\mathbb G_m)\to \ZF(X\times \A^1)/\ZF(X\times \Gm)$
be the canonical morphism. Then $C_*(\mu)$ is a sectionwise quasi-isomorphism. Also,
$C_*\ZF(\id_X\times \alpha)=\tau \circ C_*(\mu)$
with $\tau$ as in Theorem \ref{th:Main}.
Thus by Theorem \ref{th:Main} $C_*\ZF(\id_X\times \alpha)$
is locally a quasi-isomorphism. We have verified the induction base case.
To do the inductiion step from $\ell-1$ to $\ell$,
note that
   $$C_*\ZF(\id_X\times \alpha^{\wedge\ell})=C_*\ZF(\id\times\alpha)\circ
       C_*\ZF(\id_X\times \alpha^{\wedge(\ell-1)}\times\id_{\A^1//\mathbb G_m}).$$
The map
$ C_*\ZF(\id\times\alpha)$
is locally a quasi-isomorphism by the case $\ell=1$.
The morphism
$C_*\ZF(\id_X\times \alpha^{\wedge(\ell-1)}\times\id_{\A^1//\mathbb G_m})$
is locally a quasi-isomorphism by the induction assumption.
We see that
$C_*\ZF(\id_X\times \alpha^{\wedge\ell})$
is locally a quasi-isomorphism, and hence
$M_{fr}(\id_X\times \alpha^{\wedge\ell})$ is
a local stable weak equivalence for each integer $\ell \geq 1$.

It remains to prove that for each integer $\ell\geq 1$ the
morphism~\eqref{eq:cone_and_T} is a local level weak equivalence.
By the Additivity Theorem of~\cite{GP1} and
Lemma~\ref{l:connectivity}
the $S^1$-spectrum $M_{fr}(X\times T^{\ell})(U)$ with $U$ a local Henselian
smooth scheme and $X$ any smooth scheme is a
connected $\Omega$-spectrum. By the Additivity Theorem of~\cite{GP1} the
$\Gamma$-space $K \to C_*\Fr(U,(X\times (\A^1//
\mathbb G_m)^{\wedge \ell}) \otimes K)$ is special. By Lemma~\ref{l:connectivity} the zeroth
space $C_*\Fr(-,X\times (\A^1//\mathbb G_m)^{\wedge \ell})$ of
the Segal $S^1$-spectrum $M_{fr}(X\times (\A^1// \mathbb
G_m)^{\wedge\ell})(U)$ is connected. Thus the Segal $S^1$-spectrum
$M_{fr}(X\times (\A^1// \mathbb G_m)^{\wedge\ell})(U)$ is a
connected $\Omega$-spectrum. Since the
morphism~\eqref{eq:cone_and_T} is locally a stable weak equivalence between $\Omega$-spectra,
it is a local level weak equivalence.
The theorem is proved.
\end{proof}

\begin{corollary}\label{cormain}
Let $k$ be an infinite perfect field.
For every $n\geq 0$, the natural morphism
$$M_{fr}(X\times T^n\times(\A^1// \mathbb G_m))\to M_{fr}(X\times T^{n+1})$$
is locally a level weak equivalence of $S^1$-spectra.
\end{corollary}

\begin{proof}
Consider a commutative diagram
   $$\xymatrix{M_{fr}(X\times(\A^1//\mathbb G_m)^{\wedge n+1})\ar[d]\ar[r]&M_{fr}(X\times T^{n+1})\ar@{=}[d]\\
               M_{fr}(X\times T^{n}\times(\A^1//\mathbb G_m))\ar[r]&M_{fr}(X\times T^{n+1})}$$
The left vertical and upper horizontal arrows are locally level weak
equivalences of $S^1$-spectra by Theorem~\ref{cone}, and hence so is
the lower horizontal arrow.
\end{proof}

\appendix\section{}

In this section we prove the following useful

\begin{lemma}\label{l:connectivity}
For any $X\in Sm/k$ and any $n>0$ the simplicial pointed presheaves
$C_*\Fr(-,X\times (\A^1//\mathbb G_m)^{\wedge n})$ and
$C_*\Fr(-,X\times T^n)$ are locally connected in the Nisnevich
topology.
\end{lemma}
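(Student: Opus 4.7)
The two statements would be handled separately. For $C_*\Fr(-, X_+\wedge T^n)$ with $n\geq 1$, I would invoke~\cite[Theorem~7.6]{GP1}, which asserts that $M_{fr}(X\times T^n)(U)$ is a connected $\Omega$-spectrum whenever $U$ is an essentially smooth local henselian scheme. Since by construction of the Segal $S^1$-spectrum $M_{fr}(X\times T^n)$ its zero simplicial space is exactly $C_*\Fr(-, X_+\wedge T^n)$, connectedness of the zero space on every Nisnevich stalk is precisely the desired Nisnevich-local connectedness of the presheaf.

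For $C_*\Fr(-, X\times(\A^1//\mathbb G_m)^{\wedge n})$ one cannot appeal to Theorem~\ref{cone}, which logically depends on the present lemma. Instead I would build a nullhomotopy by hand using the simplicial cone structure $\A^1//\mathbb G_m = \A^1 \cup_{\mathbb G_m}(\mathbb G_m \otimes I)$, whose level $\ell$ equals $\A^1 \sqcup \mathbb G_m^{\sqcup\ell}$, with the basepoint coming from the collapsed endpoint of the cylinder $\mathbb G_m \otimes I$. A bisimplex of $C_*\Fr(U, X\times(\A^1//\mathbb G_m)^{\wedge n})$ lies, after unwinding the iterated smash in $SmOp(\Fr_0(k))$, in a stratum indexed by whether each of the $n$ cone factors is represented by its $\A^1$-base or by one of the $\mathbb G_m$-summands inside the cylinder. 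Each $\mathbb G_m$-factor retracts simplicially through $I$ to the collapsed endpoint, which is the basepoint of $\A^1//\mathbb G_m$. For an $\A^1$-factor I would first push it into the cylinder $\mathbb G_m\otimes I$ via the cone inclusion $\A^1 \hookrightarrow \A^1//\mathbb G_m$ at simplicial level one, and then apply the same $I$-slide to reach the basepoint. Assembling these factor-wise contractions via the symmetric monoidal structure of $SmOp(\Fr_0(k))$ produces a bisimplicial nullhomotopy of any $0$-simplex, establishing $\pi_0 C_*\Fr(U, X\times(\A^1//\mathbb G_m)^{\wedge n}) = *$ Nisnevich-locally.

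The main obstacle is to verify compatibility of these simplicial contractions with the framed correspondence structure: at each step one must check that the underlying family of supports remains finite over the base (over $U$, or over $U\times\A^1$ when an intermediate $\A^1$-homotopy is invoked). As in the proof of Lemma~\ref{l:pushout}, this is resolved Nisnevich-locally by restricting to a henselian étale neighborhood of the support, where the support is a local henselian scheme and the relevant finiteness propagates through the one-parameter families. Coherently combining the factor-wise $I$-slides into a single bisimplicial nullhomotopy is then a matter of careful simplicial bookkeeping using the symmetric monoidal structure.
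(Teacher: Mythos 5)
The two halves of your proposal are of very different status. For the $T^n$ part, invoking~\cite[7.6]{GP1} is a legitimate shortcut: the paper's own proof of Theorem~\ref{cone} does exactly that, and the paper's Appendix~A argument (a direct translation homotopy that moves $\phi_{n+1}$ off the support) is in fact only needed because the authors chose to prove both statements uniformly. So for $C_*\Fr(-,X_+\wedge T^n)$ you have a correct alternative.

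For $C_*\Fr(-,X\times(\A^1//\Gm)^{\wedge n})$ there is a genuine gap. Recall that $\A^1//\Gm$ at simplicial level $\ell$ equals $\A^1\sqcup\Gm^{\sqcup\ell}$, and the only simplicial structure map touching the $\A^1$-component is the degeneracy, which keeps $\A^1$ inside $\A^1$. Your step ``push the $\A^1$-factor into the cylinder $\Gm\otimes I$ via the cone inclusion $\A^1\hookrightarrow\A^1//\Gm$ at simplicial level one'' does not exist: there is no map in $\Fr_0(k)$ from $\A^1$ to any $\Gm$-summand (a morphism $\A^1\to\Gm$ would have to send $0$ somewhere), and the cone structure on $\A^1//\Gm$ collapses only the $\Gm$-end. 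The real content of the lemma is precisely to bridge this gap, and the paper does it by proving that
\[
\pi_0(C_*\Fr(U,X\times\Gm))\longrightarrow \pi_0(C_*\Fr(U,X\times\A^1))
\]
is \emph{surjective} over henselian local $U$. This is a genuine moving lemma: given $c_0=(Z,W,\phi;(f,g))\in\Fr_n(U,X\times\A^1)$, one reduces (by an adjunction $\Fr(V,X)=\Fr(V,X_K)$) to the case of finite residue field over $k$, picks a rational point $a$ in the Zariski-open complement $\A^1_k-g(Z_u)-\{0\}$ of a finite set, and builds the $\A^1$-homotopy $h_t=(Z\times\A^1,W\times\A^1,\phi;(f,g-t))$ connecting $c_0$ to a correspondence landing in $X\times\Gm$. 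None of this appears in your sketch; the closing paragraph about ``intermediate $\A^1$-homotopy'' and henselian neighborhoods gestures at the need for it but supplies neither the translation construction nor the residue-field reduction that makes the choice of $a$ possible. Finally, your factor-wise bookkeeping is also heavier than necessary: once the $n=1$ case is done, the paper passes to higher $n$ by a one-line induction using that the realization of a simplicial diagram of locally connected spaces is locally connected.
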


\begin{proof}
Firstly check the Nisnevich local connectivity of $C_*\Fr(-,X\times
(\A^1//\mathbb G_m))$. Clearly, the map $\pi_0(C_*\Fr(-,X\times
\A^1))\to\pi_0(C_*\Fr(-,X\times(\A^1//\mathbb G_m)))$ is
surjective. On the other hand the composite map of pointed sets
   $$\pi_0(C_*\Fr(-,X\times \Gm))\to\pi_0(C_*\Fr(-,X\times\A^1))\to\pi_0(C_*\Fr(-,X\times(\A^1//\mathbb G_m)))$$
is constant, because it factors through the pointed set
$\pi_0(C_*\Fr(-,(X\times \Gm)\otimes I))=*$. Thus it is sufficient
to check that for any local essentially $k$-smooth Henselian $U$ the
map
\begin{equation}\label{eq:connectivity}
\pi_0(C_*\Fr(U,X\times \Gm))  \to  \pi_0(C_*\Fr(U,X\times \A^1))
\end{equation}
is surjective. Take a framed correspondence
   $$c_0=(Z,W, \phi; (f,g):W\to X\times \A^1)\in \Fr_n(U,X\times \A^1).$$
We may now assume that $W=(\A^n_U)^h_Z$ is the henselization of
$\A^n_U$ at the closed subset $Z$.
Since the scheme $\A^n_U$ is affine Noetherian, $(\A^n_U)^h_Z$ is an affine Noetherian scheme
(see \cite[6.9]{G}).
We want to find $h_t\in
\Fr_n(\A^1_U,X\times \A^1)$ and $c_1\in \Fr_n(U,X\times \Gm_{,k})$ such
that $h_0=c_0$, $h_{1}=j \circ c_1$, where $j:X\times \Gm
\hookrightarrow X\times \A^1_k$  is the open embedding.

Construct now a required element $h_t\in \Fr_n(\A^1_U,X\times \A^1)$.
To do that consider a closed subset
$\tilde Z$ in $\A^1\times Z$ defined by the equation
$t=g|_Z$. Clearly, $\tilde Z$ is isomorphic to $Z$. Hence
it is finite over $U$. Since the field $k$ is infinite and $\tilde Z$ is finite over $U$, there is a
non-zero element $a\in k$ such that
$(a\times Z)\cap \tilde Z=\emptyset$ in $\A^1\times Z$. We may assume that $a=1$.
Put
   $$h_t=(\tilde Z,\A^1\times W,\phi_1,...,\phi_{n},g-t;f:W\to X)\in\Fr_n(\A^1_U,X\times T).$$
Clearly, $h_0=b_1$ and $h_{1}=j \circ c_1$ for certain $c_1\in \Fr_n(U,X\times \Gm_{,k})$ .
Thus
the arrow (\ref{eq:connectivity}) is surjective and
$C_*\Fr(-,X\times (\A^1//\mathbb G_m))$ is locally connected.

By induction, suppose $C_*\Fr(-,X\times (\A^1//\mathbb
G_m)^{\wedge n})$ is locally connected. Then $C_*\Fr(-,X\times
(\A^1//\mathbb G_m)^{\wedge(n+1)})$ is the realization of a
simplicial space of the form
   $$[r]\mapsto C_*\Fr(-,Y_r\times(\A^1//\mathbb G_m)^{\wedge n})$$
with $Y_r\in Sm/k$. Since each $C_*\Fr(-,Y_r\times
(\A^1//\mathbb G_m)^{\wedge n})$ is locally connected by the
induction hypothesis, then $C_*\Fr(-,X\times (\A^1//\mathbb
G_m)^{\wedge(n+1)})$ is locally connected as well.

Now let us prove the Nisnevich local connectivity of
$C_*\Fr(-,X\times T^n)$. We give a proof for $n=1$. The general
case is treated similarly. For any local essentially $k$-smooth
Henselian $U$ consider a framed correspondence
$b_0=(Z,V, \phi_1,...,\phi_n,\phi_{n+1}; f:V\to X)\in \Fr_n(U,X\times T)$.
It is sufficient to find a family of elements
\[\{h^{(0)}_t, h^{(1)}_t,\dots ,h^{(r)}_t\} \subset \Fr_n(\A^1_U,X\times T)\]
such
that $h^{(0)}_0=b_0$, $h^{(i)}_1=h^{(i+1)}_0$ for $i=0,\dots ,r-1$ and $h^{(r)}_{1}=0_n$ is the empty framed correspondence.
By Lemma \ref{l:homotopy_properties}
we may assume that
$b_1:=h^{(0)}_1$ is in
$\Fr^{qf}_n(\A^1_U,X\times T)$.
Write $b_1$ as
$b_1=(Z,W, \psi_1,...,\psi_n,\psi_{n+1}; g:W\to X)$.
We may assume that $W=(\A^n_U)^h_Z$ is the henselization of
$\A^n_U$ at the closed subset $Z$. Since
$b_1$ is in
$\Fr^{qf}_n(\A^1_U,X\times T)$
the closed subset
$Y:=\{\psi_1=\psi_2=...=\psi_n=0\}$ of $W$
is quasi-finite over $U$.
The following lemma shows that we may assume
$Y$ is finite over $U$.

\begin{lemma}\label{l: qf_to_finite}
There is an affine Zariski open $W^0\subset W$ such that the closed
subset $Y^0:=W^0\cap Y$ is finite over $U$, $Z$ is contained in
$W^0$ and
$(Z,W^0,\psi_1,...,\psi_n,\psi_{n+1};g)=(Z,W,\psi_1,...,\psi_n,\psi_{n+1};g)$.
\end{lemma}

\begin{proof}
Before proving the lemma note that its first two assertions yield
the last one. To prove the lemma, consider a closed subset $Y'$ of
$Y$ which is the union of all connected components $Y'_i$ of $Y$
having non-empty intersection with $Z$. Let $Y''$ be the union of
all other connected components of $Y$. Clearly, $Y=Y'\sqcup Y''$.
Put $W^0=W-Y''$ and $Y^0=W^0\cap Y$. Then $Y'=Y^0$. It remains to
check that $Y'$ is finite over the local henselian scheme $U$. It is
sufficient to check that each $Y'_i$ is finite over $U$. Put
$S_i=Y'_i\cap Z$. Then $S_i$ is a non-empty closed subset of $Z$. So
$S_i$ is non-empty and finite over $U$. Thus the fibre $S_{i,u}$
over the closed point $u\in U$ is non-empty, and hence so is the
fibre $Y'_{i,u}$ of $Y'_i$ over $u$. Now~\cite[Theorem I.4.2]{Mi}
implies finiteness of $Y'_i$ over $U$. Lemma~\ref{l: qf_to_finite}
is proved.
\end{proof}

Construct now an element $H_t\in \Fr_n(\A^1_U,X\times T)$
such that $H_0=b_1$ and $H_{1}=0_n$.
Consider a closed subset
$\tilde Y$ in $\A^1\times Y$ defined by the equation
$t=\psi_{n+1}|_Y$. Clearly, $\tilde Y$ is isomorphic to $Y$. Hence
it is finite over $U$. Since the field $k$ is infinite and $\tilde Y$ is finite over $U$, there is a
non-zero element $a\in k$ such that
$(a\times Y)\cap \tilde Y=\emptyset$ in $\A^1\times Y$.
We may assume that $a=1$.
Put
   $$H_t=(\tilde Y,\A^1\times W,\psi_1,...,\psi_{n},\psi_{n+1}-t;g:W\to X)\in\Fr_n(\A^1_U,X\times T).$$
Clearly, $H_0=b_1$ and $H=0_n$.
Thus $C_*\Fr(-,X\times T)$ is locally connected. The same
is true for $C_*\Fr(-,X\times T^n)$. This proves Lemma~\ref{l:connectivity}.

\end{proof}

\section{}

The main goal of this section is to prove Theorem~\ref{ZM_fr_and_LM_fr}.
It will be proved at the end of the section.

Let $\bb S$ be the sphere $S^1$-spectrum. Let $* \subset \bb S$ be
its trivial $S^1$-subspectrum corresponding to the basepoint. Let $\mathcal A$ be a pointed set
with a distinguished point $*$. Denote by $\bb S_{\mathcal A}$ the $S^1$-spectrum
$\prod_{(\mathcal A-*)}\bb S$. Let $\bb S'_{\mathcal A}$ be the
$S^1$-subspectrum $\vee_{(\mathcal A-*)}\bb S$ in $\bb S_{\mathcal A}$.

Given a finite pointed subset $A\subset \mathcal A$, let $\bb S_{A}\subset \bb
S_{\mathcal A}$ be an $S^1$-subspectrum of the form $\prod_{\mathcal
(A-*)}E_a$, where $E_a=\bb S$ if $a\in A-*$ and $E_a=*$ if $a\in \mathcal
A-A$. If $a\in A-*$ we shall write $\bb S_a$ to denote $\bb S_{\{a,*\}}$, where
$\{a,*\}\subset A$ is the two elements subset of $A$. Let $\bb
S'_A\subset \bb S_A$ be the $S^1$-subspectrum $\vee_{a\in (A-*)} \bb
S_a$ in $\bb S_A$. Clearly, the inclusion $\bb S'_A\subset \bb S_A$
is a stable equivalence of $S^1$-spectra.
Set $\bb S^{f}_{\mathcal A}:=\cup_{A\subset \mathcal A}\bb S_A\subset \bb S_{\mathcal A}$,
where the union is taken over the set of all finite pointed subsets $A$ of the pointed set
$\mathcal A$.

The following lemma is straightforward and the proof is left to the reader.

\begin{lemma}\label{l:cup_S_A_and_cup_S'_A}
Let $\mathcal A$ be a pointed set and $I$ be a set such that
for any $i\in I$ there is a finite pointed subset $A(i)\subset \mathcal A$.
We have two $S^1$-subspectra $\cup_{i\in I}\bb S'_{A(i)}$,
$\cup_{i\in I}\bb S_{A(i)}$ of the spectrum $\bb S^f_{\mathcal A}$.
Suppose $\cup_{i\in I}A(i)=\mathcal A$. Then
$\vee_{(\mathcal A-*)}\bb S_a=\cup_{i\in I}\bb S'_{A(i)}$ and the inclusion
   $$\vee_{(\mathcal A-*)}\bb S_a=\cup_{i\in I}\bb S'_{A(i)} \hookrightarrow \cup_{i\in I}\bb S_{A(i)}$$
is a stable equivalence of $S^1$-spectra.
\end{lemma}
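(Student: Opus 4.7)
The first assertion unwinds the definitions. Each $\bb S'_{A(i)}=\bigvee_{a\in A(i)-*}\bb S_a$ coincides, as a sub-spectrum of $\bb S_{\mathcal A}$, with $\bigcup_{a\in A(i)-*}\bb S_a$. Consequently, using the hypothesis $\bigcup_i A(i)=\mathcal A$,
$$\bigcup_{i\in I}\bb S'_{A(i)}=\bigcup_{a\in\mathcal A-*}\bb S_a=\bigvee_{a\in\mathcal A-*}\bb S_a.$$

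For the stable equivalence the plan is to reduce to finite unions by a filtered colimit argument and then to use an induction based on the gluing lemma for homotopy pushouts. Let $J$ be the directed poset of finite subsets of $I$, and for $j\in J$ set $U_j:=\bigcup_{i\in j}\bb S_{A(i)}$ and $U'_j:=\bigcup_{i\in j}\bb S'_{A(i)}$. As $j$ grows, these form filtered diagrams of sub-spectrum inclusions of $\bb S_{\mathcal A}$ with $\colim_j U_j=\bigcup_{i\in I}\bb S_{A(i)}$ and $\colim_j U'_j=\bigcup_{i\in I}\bb S'_{A(i)}$. Since stable equivalences of $S^1$-spectra are preserved under filtered colimits along levelwise injective cofibrations, it suffices to show that $U'_j\hookrightarrow U_j$ is a stable equivalence for every finite $j$, and I would do this by induction on $|j|$. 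The base case $|j|=1$ is exactly the statement, recalled just before the lemma, that the inclusion $\bb S'_A\hookrightarrow \bb S_A$ of a finite wedge into the corresponding finite product of copies of $\bb S$ is a stable equivalence. For the inductive step, pick $i_0\in j$, set $j':=j\setminus\{i_0\}$, and apply the elementary identities $\bb S_B\cap\bb S_C=\bb S_{B\cap C}$ and $\bb S'_B\cap\bb S'_C=\bb S'_{B\cap C}$ (valid for arbitrary pointed subsets $B,C\subset\mathcal A$) to identify $U_j$ and $U'_j$ as pushouts of sub-spectrum inclusions
$$U_j=U_{j'}\cup_{W_{j'}}\bb S_{A(i_0)},\qquad U'_j=U'_{j'}\cup_{W'_{j'}}\bb S'_{A(i_0)},$$
where $W_{j'}:=\bigcup_{i\in j'}\bb S_{A(i)\cap A(i_0)}$ and $W'_{j'}:=\bigcup_{i\in j'}\bb S'_{A(i)\cap A(i_0)}$. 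Both squares are homotopy pushouts, since all arrows are levelwise monomorphisms and hence cofibrations in the injective level model structure. By the inductive hypothesis applied to the finite families $\{A(i)\}_{i\in j'}$ and $\{A(i)\cap A(i_0)\}_{i\in j'}$, together with the base case for $\bb S'_{A(i_0)}\hookrightarrow\bb S_{A(i_0)}$, all three vertex comparison maps $W'_{j'}\to W_{j'}$, $U'_{j'}\to U_{j'}$ and $\bb S'_{A(i_0)}\to\bb S_{A(i_0)}$ are stable equivalences, and the gluing lemma for homotopy pushouts delivers the inductive step.

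The main obstacle is the bookkeeping required to keep the inductive hypothesis in force after passing to intersections with $\bb S_{A(i_0)}$: what makes the induction run is that $\{A(i)\cap A(i_0)\}_{i\in j'}$ is again a finite family of finite pointed subsets of $\mathcal A$, so the statement to be proved remains within its own scope when $|j|$ decreases. Everything else is formal, using only the set-theoretic meaning of $\cup$ and $\cap$ inside the ambient $\bb S_{\mathcal A}$ together with the standard closure properties of stable equivalences under filtered colimits of cofibrations and under homotopy pushouts of cofibrations.
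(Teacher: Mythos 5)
The paper does not supply a proof of this lemma: it declares the statement straightforward and leaves it to the reader, so there is nothing in the paper to compare against. Your argument is correct and cleanly fills that gap. The identification $\bigcup_{i}\bb S'_{A(i)}=\bigvee_{a\in\mathcal A-*}\bb S_a$ is indeed immediate from $\bb S'_{A}=\bigcup_{a\in A-*}\bb S_a$ together with $\bigcup_i A(i)=\mathcal A$. For the stable equivalence, the reduction to finite $I$ via filtered colimits is legitimate (stable homotopy groups commute with filtered colimits of spectra along levelwise monomorphisms); the levelwise identities $\bb S_B\cap\bb S_C=\bb S_{B\cap C}$ and $\bb S'_B\cap\bb S'_C=\bb S'_{B\cap C}$ hold by inspection; the two squares you write down are genuine pushouts of levelwise monomorphisms, hence homotopy pushouts in the (stable) injective model structure; and the gluing lemma together with the base case $\bb S'_A\hookrightarrow\bb S_A$ (which the paper records immediately before the lemma) closes the induction. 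You are also right that the statement being inducted upon must be the covering-free one -- the hypothesis $\bigcup_i A(i)=\mathcal A$ enters only the first assertion -- since the intersected families $\{A(i)\cap A(i_0)\}_{i\in j'}$ produced in the inductive step will not in general cover $\mathcal A$; your last paragraph correctly pinpoints this as the reason the induction stays within its own scope.
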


An application of this lemma is given below in this section. For a
finite pointed set $(K,*)$ consider a set $Map^f_\bullet(\mathcal
A,K)$ of those maps $\rho$ of pointed sets such that the set
$\rho^{-1}(K-*)$ is finite. Consider a $\Gamma$-space
$\Gamma^f_{\mathcal A}$ defined by $\Gamma^f_{\mathcal
A}(K,*)=Map^f_\bullet(\mathcal A,K)$. For a finite pointed subset
$A\subset \mathcal A$ consider a subset $Map^A_\bullet(\mathcal
A,K)\subset Map^f_\bullet(\mathcal A,K)$ consisting of all maps
$\rho$ such that $\rho^{-1}(K-*)\subset A$. Consider a
$\Gamma$-space $\Gamma_{A}$ defined by
$\Gamma_{A}(K,*)=Map^A_\bullet(\mathcal A,K)$.

Clearly, for any inclusion of finite pointed subsets $A'\subset A$ of $\mathcal A$ one has inclusions
$Map^{A'}_\bullet(\mathcal A,K)\subset Map^A_\bullet(\mathcal A,K)$
and $\Gamma_{A'}\subset \Gamma_{A}$. Moreover, one has
   $$\cup_{A\subset \mathcal A}Map^A_\bullet(\mathcal A,K)=Map^f_\bullet(\mathcal A,K) \ \text{and} \
\cup_{A\subset \mathcal A}\Gamma_{A}=\Gamma^f_{\mathcal A},$$
where the union is taken over the set of all pointed finite subsets $A$ in the pointed set
$\mathcal A$.

Let $A\subset \mathcal A$ be a finite pointed subset. For any element
$a\in A-*$ set $Map^a_\bullet(\mathcal A,K)=Map^{a\sqcup *}_\bullet(\mathcal A,K)$,
where $a\sqcup *$ stands for the two elements pointed subset of $\mathcal A$.
Set $\Gamma_a=\Gamma_{a\sqcup *}$. That is
$\Gamma_{a}(K,*)=Map^{a\sqcup *}_\bullet(\mathcal A,K)$.
Let $Map^{A,s}_\bullet(\mathcal A,K)\subset Map^A_\bullet(\mathcal A,K)$
consist of maps $\rho$
such that the subset $\rho^{-1}(K-*)\subset A$ either has one element or is the empty set.
Consider a $\Gamma$-subspace $\Gamma'_{A}\subset \Gamma_{A}$ such that
$\Gamma'_{A}(K,*)=Map^{A,s}_\bullet(\mathcal A,K)$.

The $\Gamma$-space $\Gamma_{A}$ is isomorphic to the $\Gamma$-space
$\prod_{a\in (A-*)}\Gamma_a$. The $\Gamma$-space $\Gamma'_{A}$ is isomorphic to the $\Gamma$-space $\vee_{a\in (A-*)}\Gamma'_a$. Moreover these isomorphisms are consistent with the inclusion $\vee_{a\in (A-*)}\Gamma_a \subset \prod_{a\in (A-*)}\Gamma_a$.

\begin{lemma}\label{l:cup_G_A_and_cup_G'_A}
Let $\mathcal A$, $I$ and $A(i)$ be as in Lemma
\ref{l:cup_S_A_and_cup_S'_A}.
There are two $\Gamma$-subspaces $\cup_{i\in I}\Gamma'_{A(i)}$,
$\cup_{i\in I}\Gamma_{A(i)}$ of the $\Gamma$-space $\Gamma^f_{\mathcal A}$.
Suppose $\cup_{i\in I}A(i)=\mathcal A$. Then
$\vee_{a \in (\mathcal A-*)}\Gamma_a=\cup_{i\in I}\Gamma'_{A(i)}$ and we have natural inclusions
$$\vee_{a\in (\mathcal A-*)}\Gamma_a=\cup_{i\in I}\Gamma'_{A(i)} \hookrightarrow \cup_{i\in I}\Gamma_{A(i)}$$
of $\Gamma$-spaces.
\end{lemma}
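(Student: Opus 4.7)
The plan is to verify the stated identity and the inclusion pointwise at each finite pointed set $(K,*)$, and then observe that the relevant identifications are natural in $K$ so that they assemble into morphisms of $\Gamma$-spaces. All the ingredients are purely set-theoretic, so no additional technology is needed beyond a careful unravelling of the definitions preceding the statement.

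First I would evaluate $\vee_{a\in(\mathcal A-*)}\Gamma_a$ at $(K,*)$. By the isomorphism of $\Gamma$-spaces
$\Gamma'_A\cong\vee_{a\in(A-*)}\Gamma_a$ recalled just before the lemma, a section of the wedge on $(K,*)$ is canonically a map $\rho\in Map^f_\bullet(\mathcal A,K)$ with $\rho^{-1}(K-*)$ either empty or a singleton $\{a\}$ with $a\in\mathcal A-*$. This identifies $\vee_{a\in(\mathcal A-*)}\Gamma_a(K,*)$ with the subset $S(K)\subset Map^f_\bullet(\mathcal A,K)$ of maps whose non-basepoint fibre has at most one element.

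Next I would describe $\cup_{i\in I}\Gamma'_{A(i)}(K,*)$: an element is a $\rho\in Map^f_\bullet(\mathcal A,K)$ for which there exists $i\in I$ with $\rho^{-1}(K-*)\subset A(i)$ and $|\rho^{-1}(K-*)|\leq 1$. The inclusion $\cup_{i}\Gamma'_{A(i)}(K,*)\subset S(K)$ is then clear. For the reverse inclusion, take $\rho\in S(K)$. If $\rho^{-1}(K-*)=\emptyset$ then $\rho\in \Gamma'_{A(i)}(K,*)$ for any $i\in I$, so there is nothing to do. Otherwise $\rho^{-1}(K-*)=\{a\}$ for some $a\in\mathcal A-*$; by the hypothesis $\cup_{i\in I}A(i)=\mathcal A$ there exists $i\in I$ with $a\in A(i)$, and then $\rho\in Map^{A(i),s}_\bullet(\mathcal A,K)=\Gamma'_{A(i)}(K,*)$. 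This gives the equality $\vee_{a\in(\mathcal A-*)}\Gamma_a=\cup_{i\in I}\Gamma'_{A(i)}$ on each $(K,*)$, and these identifications are plainly compatible with maps of pointed sets, so they define an equality of $\Gamma$-subspaces of $\Gamma^f_{\mathcal A}$.

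Finally, the containment $\Gamma'_{A(i)}\subset\Gamma_{A(i)}$ holds termwise by the very definition ($Map^{A(i),s}_\bullet(\mathcal A,K)\subset Map^{A(i)}_\bullet(\mathcal A,K)$), and taking the union over $i\in I$ yields the desired inclusion $\cup_{i\in I}\Gamma'_{A(i)}\hookrightarrow\cup_{i\in I}\Gamma_{A(i)}$ of $\Gamma$-subspaces of $\Gamma^f_{\mathcal A}$. There is no substantive obstacle here: unlike Lemma~\ref{l:cup_S_A_and_cup_S'_A}, the statement asserts only equality and inclusion of $\Gamma$-spaces, and not a stable equivalence, so the argument is complete once the pointwise bookkeeping is carried out. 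The only point demanding attention is the use of the covering hypothesis $\cup_i A(i)=\mathcal A$, which is exactly what allows one to realise every single element $a\in\mathcal A-*$ inside some $\Gamma'_{A(i)}$.
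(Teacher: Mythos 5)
Your argument is correct and is exactly the elementary unravelling that the paper has in mind: the paper leaves Lemma~\ref{l:cup_G_A_and_cup_G'_A} without proof (as it does the analogous Lemma~\ref{l:cup_S_A_and_cup_S'_A}), treating it as a direct consequence of the definitions of $\Gamma_A$, $\Gamma'_A$, and the identifications $\Gamma_A\cong\prod_{a\in A-*}\Gamma_a$, $\Gamma'_A\cong\vee_{a\in A-*}\Gamma_a$ made just before the statement. Your pointwise verification at each $(K,*)$, using the covering hypothesis $\cup_i A(i)=\mathcal A$ to absorb any singleton fibre into some $A(i)$, is the intended reading; the only tiny point worth making explicit is that $I$ is necessarily nonempty (since each $A(i)$ contains the basepoint, so $\cup_i A(i)=\mathcal A$ forces $I\neq\emptyset$), which you implicitly need to place the constant-to-basepoint map inside some $\Gamma'_{A(i)}$.
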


Let $Seg: \Gamma-spaces \to S^1-spectra$ be the functor associating the Segal $S^1$-spectrum
to a $\Gamma$-space. Then $Seg(\Gamma^f_{\mathcal A})=\bb S^f_{\mathcal A}$.
Given a non-distinguished element $a\in \mathcal A$ one has $Seg(\Gamma_a)=\bb S$
and the functor $Seg$ converts the inclusion
$\Gamma_a\subset \Gamma^f_{\mathcal A}$ to the inclusion
$\bb S_a\subset \bb S^f_{\mathcal A}$.
For any finite pointed subset $A$ in $\mathcal A$, the functor $Seg$ converts the inclusion
$\Gamma_A\subset \Gamma^f_{\mathcal A}$
to the inclusion $\bb S_A\subset \bb S^f_{\mathcal A}$.
It also converts the inclusion $\Gamma'_A\subset \Gamma_A$
to the inclusion $\bb S'_A\subset \bb S_A$ as well as the inclusion
$\vee_{a \in (\mathcal A-*)}\Gamma_a\subset \Gamma^f_{\mathcal A}$
to the inclusion
$\vee_{a \in (\mathcal A-*)}\bb S_a\subset \bb S^f_{\mathcal A}$.

The above arguments together with Lemmas
\ref{l:cup_S_A_and_cup_S'_A} and \ref{l:cup_G_A_and_cup_G'_A} prove
the following

\begin{lemma}\label{l:cup_Seg_A_and_cup_Seg'_A}
Let $\mathcal A$, $I$ and $A(i)$ be as in Lemma
\ref{l:cup_S_A_and_cup_S'_A}.
There are two $S^1$-subspectra $\cup_{i\in I}Seg(\Gamma'_{A(i)})$,
$\cup_{i\in I}Seg(\Gamma_{A(i)})$ of the $S^1$-spectrum $Seg(\Gamma^f_{\mathcal A})$.
Suppose $\cup_{i\in I}A(i)=\mathcal A$. Then
$\vee_{a \in (\mathcal A-*)}Seg(\Gamma_a)=\cup_{i\in I}Seg(\Gamma'_{A(i)})$ and the inclusion
$$\vee_{a\in (\mathcal A-*)}Seg(\Gamma_a)=\cup_{i\in I}Seg(\Gamma'_{A(i)}) \hookrightarrow \cup_{i\in I}Seg(\Gamma_{A(i)})$$
is a stable equivalence of $S^1$-spectra.
\end{lemma}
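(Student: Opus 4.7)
The plan is to deduce this lemma as a direct consequence of Lemmas~\ref{l:cup_S_A_and_cup_S'_A} and \ref{l:cup_G_A_and_cup_G'_A} by applying the Segal functor $Seg$ and tracking how it behaves on the various $\Gamma$-subspaces already listed in the discussion preceding the statement.

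First, I would record (or verify, if not already implicit in the preceding paragraph) the compatibility of $Seg$ with the pieces at hand: on the basic building blocks one has $Seg(\Gamma_a)=\bb S_a$, $Seg(\Gamma_A)=\bb S_A$, $Seg(\Gamma'_A)=\bb S'_A$, and $Seg(\Gamma^f_{\mathcal A})=\bb S^f_{\mathcal A}$, with all the naming inclusions preserved. In particular the canonical isomorphisms $\Gamma_A\cong\prod_{a\in A-*}\Gamma_a$ and $\Gamma'_A\cong\vee_{a\in A-*}\Gamma_a$ are converted by $Seg$ to the corresponding identifications $\bb S_A\cong\prod_{a\in A-*}\bb S_a$ and $\bb S'_A\cong\vee_{a\in A-*}\bb S_a$; this is immediate once one writes $Seg$ out level-wise.

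Second, I would verify that $Seg$ commutes with the filtered unions of $\Gamma$-subspaces appearing in the statement, i.e.\
\[
Seg\Bigl(\bigcup_{i\in I}\Gamma_{A(i)}\Bigr)=\bigcup_{i\in I}Seg(\Gamma_{A(i)}),\qquad Seg\Bigl(\bigcup_{i\in I}\Gamma'_{A(i)}\Bigr)=\bigcup_{i\in I}Seg(\Gamma'_{A(i)}).
\]
This holds because the values of the Segal construction at a pointed finite set and at each simplicial level are given by evaluation of the underlying $\Gamma$-functor, which is a monomorphism-preserving functor that commutes with directed colimits of subobjects. Applying $Seg$ to the identity $\vee_{a\in(\mathcal A-*)}\Gamma_a=\bigcup_{i\in I}\Gamma'_{A(i)}$ from Lemma~\ref{l:cup_G_A_and_cup_G'_A} then yields the first asserted equality $\vee_{a\in(\mathcal A-*)}Seg(\Gamma_a)=\bigcup_{i\in I}Seg(\Gamma'_{A(i)})$.

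Third, applying $Seg$ to the inclusion $\bigcup_{i\in I}\Gamma'_{A(i)}\hookrightarrow\bigcup_{i\in I}\Gamma_{A(i)}$ and using the compatibilities above identifies it with the inclusion $\bigcup_{i\in I}\bb S'_{A(i)}\hookrightarrow\bigcup_{i\in I}\bb S_{A(i)}$, which by Lemma~\ref{l:cup_S_A_and_cup_S'_A} is a stable equivalence of $S^1$-spectra. This gives the second conclusion of the lemma.

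The only real obstacle is the bookkeeping step, namely verifying that $Seg$ commutes with the unions $\bigcup_{i\in I}$; once one accepts this (which is essentially formal, since union of subpresheaves is a filtered colimit of inclusions and $Seg$ is defined pointwise), the statement follows by combining the two previous lemmas. No new homotopical input is required beyond Lemma~\ref{l:cup_S_A_and_cup_S'_A}.
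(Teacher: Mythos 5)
Your proposal is correct and follows essentially the same route as the paper: the text immediately preceding the lemma records exactly the $Seg$-compatibilities you list, and the paper then invokes Lemmas~\ref{l:cup_S_A_and_cup_S'_A} and~\ref{l:cup_G_A_and_cup_G'_A} as you do, with the fact that $Seg$ commutes with unions of $\Gamma$-subspaces left implicit (you make it explicit, which is a harmless refinement rather than a deviation).
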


\begin{notation}\label{n:A_and_I_specific}{\rm
Let $U,X\in Sm/k$ and let $m,n\geq 0$ be integers. Set $\mathcal
A=\F_m(U,X\times T^n)$ and regard it as a pointed set pointed by
the empty framed correspondence $0_m$. Set $I=\Fr_m(U,X\times
T^n)-0_m$. }\end{notation}

In the remaining part of this section we use notation from Section
\ref{s:Fr(X_T_n}.

\begin{definition}\label{def:A(Phi)}{\rm
Given $\Phi=(Z,W,\phi;g),\Phi'=(Z',W',\phi';g')\in \Fr_m(U,X\times
T^n)$, we write $\Phi'\leq \Phi$ if there is a closed subset $Z''$
in $\A^m\times U$ such that $Z=Z'\sqcup Z''$ and
   $$(Z',W',\phi';g')=(Z',W-Z'',\phi|_{W-Z''};g|_{W-Z''}) \in \Fr_m(U,X\times T^n).$$
For any $\Phi\in \Fr_m(U,X\times T^n)$ set
$A(\Phi)=\{\Psi\in \F_m(U,X\times T^n): \Psi \leq \Phi \} \subset \mathcal A$.
Clearly,
\begin{equation*}\label{eq:A_and_A_Phi}
\mathcal A=\cup_{\Phi\in I} A(\Phi).
\end{equation*}
}
\end{definition}

For a finite pointed set $(K,*)$ the pointed set $\Fr_m(U,(X\times
T^n)\otimes K)$ is defined by the
formula~\eqref{eq:Fr_n_U_X_times_T_m_otimes_K}. Let $K'=K-*$. By
Definition~\ref{def:FrY/Y-S}(III) the set $\Fr_m(U,(X\times
T^n)\otimes K)$ consists of equivalence classes of tuples
$(Z,W,\phi;g;f)$, where $Z$ is a closed subset of $U\times\bb A^m$,
finite over $U$, $W$ is an \'{e}tale neighborhood of $Z$ in
$U\times\bb A^m$,
$\phi_1,\ldots,\phi_{m},\phi_{m+1},\ldots,\phi_{m+n}$ are regular
functions on $W$, $(g,f): W\to X\times K'$ is a regular map such
that $Z=Z(\phi_1,\ldots,\phi_{m+n})$. Notice that regular maps from
$W$ to $X\otimes K$ are in one-to-one correspondence with couples of
regular maps $(W\to X,W\to K')$.

For a finite pointed set $(K,*)$, the pointed set $\F_m(U,(X\times
T^n)\otimes K)$ is defined by the
formula~\eqref{eq:F_n_U_X_times_T_m_otimes_K}. By
Definition~\ref{F_m_U_Y/(Y-S)}
it consists of those elements
$(Z,W,\phi;g;f)\in \Fr_m(U,(X\times T^n)\otimes K)$ such that the
closed subset $Z$ of $U\times\bb A^m$ is connected.

\begin{definition}\label{def:Gamma_m_and_Gamma'_m}{\rm
Denote by $\Gamma_m(U,X\times T^n)$ the $\Gamma$-space $(K,*)\mapsto
\Fr_m(U,(X\times T^n)\otimes K)$. \\
Similarly, $\Gamma'_m(U,X\times T^n)$ stands
for the $\Gamma$-space $(K,*)\mapsto \F_m(U,(X\times T^n)\otimes
K)$.

Given $\Phi\in I$ define $\Gamma_m(U,X\times T^n)_{\Phi}$ as a
$\Gamma$-subspace of the $\Gamma$-space $\Gamma_m(U,X\times T^n)$
such that for a finite pointed set $(K,*)$
   $$\Gamma_m(U,X\times T^n)_{\Phi}(K)=\{(Z,W,\phi;g;f)\in \Fr_m(U,(X\times T^n)\otimes K)\mid (Z,W,\phi;g)\leq \Phi \in \Fr_m(U,X\times T^n)\}.$$
Define $\Gamma'_m(U,X\times T^n)_{\Phi}$ as a $\Gamma$-subspace of
the $\Gamma$-space $\Gamma'_m(U,X\times T^n)$ such that for a finite
pointed set $(K,*)$
   $$\Gamma'_m(U,X\times T^n)_{\Phi}(K)=\{(Z,W,\phi;g;f)\in \F_n(U,(X\times T^n)\otimes K)\mid (Z,W,\phi;g)\leq \Phi \in \F_m(U,X\times T^n)\}.$$

}\end{definition}

\begin{definition}{\rm
For a finite pointed set $(K,*)$ put $K'=K-*$ and consider a pointed
set map
   $$inc_{K}: \Fr_m(U,(X\times T^n)\otimes K)\to Map^f_{Sets_{\bullet}}(\mathcal A,K),$$
which is defined as follows. Let $\Psi=(Z,W,\phi;g;f)\in
\Fr_m(U,(X\times T^n)\otimes K)$ and $a=(Z_a,W_a,\phi_a;g_a)\in
\mathcal A=\F_m(U,X\times T^n)$. If the element $a$ is in
$\mathcal A-A((Z,W\phi;g))$, then the map $inc_{K}(\Psi)$ takes the
element $a$ to the distinguished point $*$ of the set $K$. If $a\in
A((Z,W,\phi;g))-0_m$, then the map $inc_{K}(\Psi)$ takes the element
$a$ to $f(Z_a)\in K'\subset K$. Finally, the map $inc_{K}(\Psi)$
sends $0_m$ to the distinguished point $*$ of the set $K$.

Recall that $Z_a$ is connected and if $a\in A((Z,W,\phi;g))$, then
$Z=Z_a\sqcup Z''$ for some $Z''$. Define a $\Gamma$-space morphism
$$inc_m: \Gamma_m(U,X\times T^n)\to \Gamma^f_{\mathcal A}$$
sending a finite pointed set $(K,*)$ to the pointed set map
$inc_{K}$. It is straightforward to check that it is indeed a
$\Gamma$-space morphism.

}\end{definition}

The following lemma is crucial.

\begin{lemma}\label{l:key}
The $\Gamma$-space morphism $inc_m$ is injective. Moreover, using
this inclusion the following identifications hold:

\begin{enumerate}
\item for any $\Phi\in I$, one has $\Gamma_m(U,X\times
T^n)_{\Phi}=\Gamma_{A(\Phi)}$ and $\cup_{\Phi\in
I}\Gamma_m(U,X\times T^n)_{\Phi}= \cup_{\Phi\in I}\Gamma_{A(\Phi)}$;
\item for any $\Phi\in I$, one has $\Gamma'_m(U,X\times
T^n)_{\Phi}=\Gamma'_{A(\Phi)}$ and $\cup_{\Phi\in
I}\Gamma'_m(U,X\times T^n)_{\Phi}=\cup_{\Phi\in
I}\Gamma'_{A(\Phi)}$;
\item for any $a\in \mathcal A-0_m=\F_m(U,X\times T^n)-0_m$, one has $\Gamma_m(U,X\times T^n)_a=\Gamma_a$;
\item $\vee_{a\in (\mathcal A-*)}\Gamma_m(U,X\times T^n)_a=\vee_{a\in (\mathcal A-*)}\Gamma_a$.
\end{enumerate}
\end{lemma}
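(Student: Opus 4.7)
The plan is to first establish injectivity of the $\Gamma$-space morphism $inc_m$, then verify identification (1), and then deduce (2), (3), (4) from (1) essentially formally. The overarching observation is that a framed correspondence in $\Fr_m(U,(X_+\wedge T^n)\otimes K)$ is, up to equivalence, completely encoded by listing its connected components (as elements of $\mathcal A$) together with the value in $K-*$ that the function $f$ takes on each --- and this is precisely the data of the function $inc_K(\Psi)\colon \mathcal A \to K$.

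For injectivity, suppose $\Psi,\Psi' \in \Fr_m(U,(X_+\wedge T^n)\otimes K)$ satisfy $inc_K(\Psi) = inc_K(\Psi')$. The support of the common function identifies the set of connected components (in $\mathcal A$) of the underlying framed correspondences. Since a framed correspondence up to equivalence is determined by its list of connected components (Definition~\ref{def:FrY/Y-S}(II)), the correspondences $(Z,W,\phi;g)$ and $(Z',W',\phi';g')$ are equivalent, and there is an \'{e}tale neighborhood $W''$ of $Z$ on which $\phi,g$ agree with $\phi',g'$. For the $f$ coordinate, discreteness of $K-*$ implies $f,f'$ are locally constant on $W''$; they agree on the support $Z$ by hypothesis, hence agree on the union of those connected components of $W''$ meeting $Z$, which is again an \'{e}tale neighborhood of $Z$. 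Thus $\Psi = \Psi'$.

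For (1), the forward inclusion $\Gamma_m(U,X_+\wedge T^n)_\Phi \subset \Gamma_{A(\Phi)}$ is immediate: if $(Z,W,\phi;g) \leq \Phi$, every component of $Z$ lies in $A(\Phi)$, so the support of $inc_K(\Psi)$ lies in $A(\Phi)$. Conversely, given $\rho \in \Gamma_{A(\Phi)}(K)$ with $\rho^{-1}(K-*) = \{a_1,\ldots,a_r\} \subset A(\Phi)$, I would construct a preimage $\Psi$ by taking the sub-correspondence of $\Phi$ supported on $Z_{a_1}\sqcup\cdots\sqcup Z_{a_r}$ and defining $f$ to be the locally constant function taking value $\rho(a_i)$ on a neighborhood of $Z_{a_i}$ (passing to the henselization of $\A^m \times U$ at this support, if necessary, makes $f$ genuinely regular with values in the discrete set $K-*$). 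Then $inc_K(\Psi) = \rho$ by construction, and the second equality in (1) follows by taking unions over $\Phi \in I$.

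Finally, (3) is the case $\Phi = a$ of (1), since connectedness of $Z_a$ forces $A(a) = \{0_m, a\}$; (4) is then obtained by wedging (3) over $a \in \mathcal A - *$. Statement (2) is the restriction of (1) to the connected-support subspace $\Gamma'_m \subset \Gamma_m$: under $inc_K$, the condition that $Z$ is connected translates exactly to $inc_K(\Psi)$ taking at most one non-basepoint value, which is the defining property of $\Gamma'_{A(\Phi)}$. The main obstacle throughout is controlling the $f$ coordinate carefully through the equivalence relation on framed correspondences; this is the only non-formal point and relies essentially on discreteness of $K-*$ to produce locally constant refinements of \'{e}tale neighborhoods.
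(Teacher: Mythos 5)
Your proof is correct, and in fact the paper provides no argument for Lemma~\ref{l:key} at all: it appears immediately after the line \textit{``The following lemma is crucial''} and is then invoked to deduce the subsequent proposition, the authors evidently regarding it as a direct unwinding of the definitions. Your write-up supplies exactly the unwinding one would want, and the overall strategy (recover the component set from the support of $inc_K(\Psi)$, then use discreteness of $K-*$ to control the extra map $f$, and build preimages by restricting $\Phi$ and choosing a locally constant $f$) is the natural one.

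Two small remarks on precision. First, the step ``a framed correspondence up to equivalence is determined by its list of connected components'' is not literally the content of Definition~\ref{def:FrY/Y-S}(II); that definition only \emph{introduces} the equivalence relation. The assertion itself requires a short gluing argument: if $(Z,W,\phi;g)$ and $(Z',W',\phi';g')$ have the same set $A(\cdot)-0_m$ of components $a_1,\dots,a_r$, then for each $i$ one obtains an \'{e}tale neighborhood $W_i''$ of $Z_{a_i}$ in $W\times_{\A^m_U}W'$ on which $\phi\circ pr$ and $\phi'\circ pr'$ (and likewise $g,g'$) agree, and one takes $W''=\bigcup_i W_i''$, which is an \'{e}tale neighborhood of $Z=\bigsqcup Z_{a_i}$ witnessing equivalence of the two correspondences; this is the same spirit as the paper's remark in Definition~\ref{F_m_U_Y/(Y-S)} that $\F_m(U,\cdot)\setminus 0_m$ freely generates $\ZF_m(U,\cdot)$, but is a statement at the level of $\Fr_m$ rather than $\ZF_m$ and should be spelled out. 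Second, in the backward direction of (1) you should make explicit that the surjectivity lands in $\Gamma_m(U,X_+\wedge T^n)_\Phi$, which uses transitivity of the relation $\leq$ of Definition~\ref{def:A(Phi)} (the restriction of $\Phi$ to $Z_{a_1}\sqcup\dots\sqcup Z_{a_r}$ is again $\leq\Phi$); this is easy but worth a sentence. With these two clarifications, your proof is complete and is, as far as one can tell, the argument the authors had in mind.
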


Applying the Segal functor $Seg$, we see that
Lemmas~\ref{l:cup_G_A_and_cup_G'_A}
and~\ref{l:cup_Seg_A_and_cup_Seg'_A} imply the following

\begin{proposition}
Let $\mathcal A=\F_m(U,X\times T^n)$, $I=\Fr_m(U,X\times
T^n)-0_m$ be as Notation~\ref{n:A_and_I_specific} and for $\Phi\in
I$ let the subset $A(\Phi)\subset \mathcal A$ be as in
Definition~\ref{def:A(Phi)}. There are two $S^1$-subspectra
$\cup_{i\in I}Seg(\Gamma'_m(U,X\times T^n)_{\Phi})$, $\cup_{i\in
I}Seg(\Gamma_m(U,X\times T^n)_{\Phi})$ of the $S^1$-spectrum
$Seg(\Gamma^f_{\mathcal A})$. One has an equality of the
$S^1$-subspectra
   $$\vee_{a \in (\mathcal A-*)}Seg(\Gamma_m(U,X\times T^n)_a)=\cup_{\Phi\in I}Seg(\Gamma'_m(U,X\times T^n)_{\Phi})$$
and the inclusion
\begin{multline*}
\vee_{a\in (\mathcal A-*)}Seg(\Gamma_m(U,X\times T^n)_a)=\cup_{\Phi\in I}Seg(\Gamma'_m(U,X\times T^n)_{\Phi})=Seg(\Gamma'_m(U,X\times T^n)) \hookrightarrow \\
\hookrightarrow \cup_{\Phi\in I}Seg(\Gamma_m(U,X\times T^n)_{A(\Phi)})=Seg(\Gamma_m(U,X\times T^n))
\end{multline*}
is a stable equivalence of the $S^1$-spectra.
\end{proposition}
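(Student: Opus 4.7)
The plan is to reduce the statement to the abstract Lemma~\ref{l:cup_S_A_and_cup_S'_A}, applied to the family $\{A(\Phi)\}_{\Phi\in I}$ of finite pointed subsets of $\mathcal A = \F_m(U,X_+\wedge T^n)$, via the identifications furnished by Lemma~\ref{l:key}.

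First I would verify that the family $\{A(\Phi)\}_{\Phi\in I}$ covers $\mathcal A$. This is immediate from Definition~\ref{def:A(Phi)}: any non-distinguished element $a\in \mathcal A = \F_m(U,X_+\wedge T^n)\setminus 0_m$ is itself an element of $I = \Fr_m(U,X_+\wedge T^n)\setminus 0_m$ (since $\F_m\subset \Fr_m$), and trivially $a\in A(a)$. Second, parts (1)--(4) of Lemma~\ref{l:key} give the four identifications inside the ambient $\Gamma$-space $\Gamma^f_{\mathcal A}$:
\begin{align*}
\Gamma_m(U,X_+\wedge T^n)_{\Phi} &= \Gamma_{A(\Phi)},\\
\Gamma'_m(U,X_+\wedge T^n)_{\Phi} &= \Gamma'_{A(\Phi)},\\
\Gamma_m(U,X_+\wedge T^n)_a &= \Gamma_a,\\
\bigvee_{a\in \mathcal A-*}\Gamma_m(U,X_+\wedge T^n)_a &= \bigvee_{a\in \mathcal A-*}\Gamma_a.
\end{align*}

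Next, I would apply the Segal functor $Seg\colon \Gamma\text{-spaces}\to S^1\text{-spectra}$. Since $Seg$ preserves the inclusions, unions of subfunctors, and wedges involved here, the preceding identifications transfer verbatim to the corresponding $S^1$-subspectra of $Seg(\Gamma^f_{\mathcal A}) = \bb S^f_{\mathcal A}$. In particular, $\cup_{\Phi\in I} Seg(\Gamma'_m(U,X_+\wedge T^n)_{\Phi}) = \cup_{\Phi\in I} Seg(\Gamma'_{A(\Phi)})$ and similarly for the unproved superscript version; while $\vee_{a\in\mathcal A-*} Seg(\Gamma_m(U,X_+\wedge T^n)_a) = \vee_{a\in\mathcal A-*}\bb S_a$.

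Finally, Lemma~\ref{l:cup_Seg_A_and_cup_Seg'_A} (a direct consequence of Lemma~\ref{l:cup_S_A_and_cup_S'_A}) applied to the covering $\cup_{\Phi\in I}A(\Phi) = \mathcal A$ gives the equality
$$\bigvee_{a\in \mathcal A-*}Seg(\Gamma_a) = \bigcup_{\Phi\in I}Seg(\Gamma'_{A(\Phi)})$$
and the assertion that the inclusion $\cup_{\Phi\in I}Seg(\Gamma'_{A(\Phi)})\hookrightarrow \cup_{\Phi\in I}Seg(\Gamma_{A(\Phi)})$ is a stable equivalence of $S^1$-spectra. Translating back through the identifications, this is precisely the statement of the proposition. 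The only real content lies in Lemma~\ref{l:key}, namely the injectivity of the map $inc_m$ and the fact that the concrete subpresheaves $\Gamma_m(U,X_+\wedge T^n)_\Phi$, $\Gamma'_m(U,X_+\wedge T^n)_\Phi$ indexed by framed correspondences coincide with the abstract subfunctors $\Gamma_{A(\Phi)}$, $\Gamma'_{A(\Phi)}$; once that bookkeeping is accepted, the proof is a purely formal application of the abstract wedge-versus-product comparison combined with preservation properties of $Seg$.
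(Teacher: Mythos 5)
Your proof is correct and follows essentially the same route as the paper's: the paper likewise observes the covering $\mathcal A=\cup_{\Phi\in I}A(\Phi)$ (stated at the end of Definition~\ref{def:A(Phi)}), invokes Lemma~\ref{l:key} for the identifications under $inc_m$, applies the Segal functor, and concludes by Lemmas~\ref{l:cup_G_A_and_cup_G'_A} and~\ref{l:cup_Seg_A_and_cup_Seg'_A}. (Minor slip: ``unproved superscript version'' presumably means the \emph{unprimed} version.)
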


Set,
   $$\Fr^{S^1}_m(U,X\times T^n)=Seg(\Gamma_m(U,X\times T^n)) \ \text{and} \ \Fr^{S^1}_m(U,X\times T^n)_{\Phi}=Seg(\Gamma_m(U,X\times T^n)_{\Phi}),$$
   $$\F^{S^1}_m(U,X\times T^n)=Seg(\Gamma'_m(U,X\times T^n)) \ \text{and} \  \F^{S^1}_m(U,X\times T^n)_{\Phi}=Seg(\Gamma'_m(U,X\times T^n)_{\Phi}).$$

Under this notation the preceding proposition implies the following

\begin{theorem}\label{p:Fr_and_F}
Let $U,X\in Sm/k$ and let $m,n\geq 0$ be integers. One has an
equality of $S^1$-subspectra
$$\vee_{\Psi \in (\F_m(U,X\times T^n)-0_m)}\F^{S^1}_m(U,X\times T^n)_{\Psi}=\cup_{\Phi\in (\Fr_m(U,X\times T^n)-0_m)}\F^{S^1}_m(U,X\times T^n)_{\Phi}$$
of the spectra $\Fr^{S^1}_m(U,X\times T^n)$ and the inclusion
\begin{equation}\label{eq:F_S1_and_Fr_S1}
\F^{S^1}_m(U,X\times T^n)\subset \Fr^{S^1}_m(U,X\times T^n)
\end{equation}
is a stable equivalence of the $S^1$-spectra.
\end{theorem}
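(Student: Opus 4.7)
The plan is to derive Theorem~\ref{p:Fr_and_F} from the unnumbered proposition immediately preceding it. That proposition is the image under the Segal functor $Seg$ of the abstract Lemmas~\ref{l:cup_G_A_and_cup_G'_A} and~\ref{l:cup_Seg_A_and_cup_Seg'_A}, applied to the pointed set $\mathcal A=\F_m(U,X_+\wedge T^n)$ with index set $I=\Fr_m(U,X_+\wedge T^n)-0_m$ and the family $\{A(\Phi)\}_{\Phi\in I}$ of Definition~\ref{def:A(Phi)}. The content of Theorem~\ref{p:Fr_and_F} is then a cosmetic restatement of that proposition in the abbreviated notation $\Fr^{S^1}_m(U,X_+\wedge T^n)_\Phi=Seg(\Gamma_m(U,X_+\wedge T^n)_\Phi)$ and $\F^{S^1}_m(U,X_+\wedge T^n)_\Phi=Seg(\Gamma'_m(U,X_+\wedge T^n)_\Phi)$ from Definition~\ref{def:Gamma_m_and_Gamma'_m}.

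The concrete steps I would carry out are the following. First, for any $\Psi\in\mathcal A-0_m$ I observe that $A(\Psi)=\{\Psi,0_m\}$ by Definition~\ref{def:A(Phi)}, and on this two-element pointed set the $\Gamma$-spaces $\Gamma_{A(\Psi)}$ and $\Gamma'_{A(\Psi)}$ coincide tautologically; combined with parts $(2)$ and $(3)$ of Lemma~\ref{l:key} this yields $\F^{S^1}_m(U,X_+\wedge T^n)_\Psi=Seg(\Gamma_\Psi)$. Next, Lemma~\ref{l:key}$(2)$ identifies $\cup_{\Phi\in I}\F^{S^1}_m(U,X_+\wedge T^n)_\Phi$ with $\cup_{\Phi\in I}Seg(\Gamma'_{A(\Phi)})$, while Lemma~\ref{l:key}$(1)$ together with the tautology $\cup_{\Phi\in I}\Gamma_m(U,X_+\wedge T^n)_\Phi=\Gamma_m(U,X_+\wedge T^n)$, valid because every tuple $(Z,W,\phi;g;f)$ automatically lies below its own underlying correspondence $(Z,W,\phi;g)$, identifies $\cup_{\Phi\in I}\Fr^{S^1}_m(U,X_+\wedge T^n)_\Phi$ with $\Fr^{S^1}_m(U,X_+\wedge T^n)$. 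Inserting these three identifications into the equality and the inclusion provided by the preceding proposition produces exactly the equality $\bigvee_\Psi \F^{S^1}_m(U,X_+\wedge T^n)_\Psi=\bigcup_\Phi \F^{S^1}_m(U,X_+\wedge T^n)_\Phi$ and the stable equivalence $\F^{S^1}_m(U,X_+\wedge T^n)\hookrightarrow\Fr^{S^1}_m(U,X_+\wedge T^n)$ asserted in Theorem~\ref{p:Fr_and_F}.

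I do not foresee any genuine obstacle: the whole argument is bookkeeping once Lemma~\ref{l:key} is in hand. The only hypothesis that needs to be verified in order to invoke Lemma~\ref{l:cup_Seg_A_and_cup_Seg'_A} is $\cup_{\Phi\in I}A(\Phi)=\mathcal A$, but this is immediate from Definition~\ref{def:A(Phi)}, since every $\Psi\in\mathcal A-0_m$ lies in its own $A(\Psi)$. The substantive input, namely that the inclusion $\bigvee_a Seg(\Gamma_a)\hookrightarrow\cup_\Phi Seg(\Gamma_{A(\Phi)})$ of the wedge of sphere spectra into the big $S^1$-spectrum is a stable equivalence, has already been settled at the abstract level in Lemma~\ref{l:cup_Seg_A_and_cup_Seg'_A}, so the theorem itself requires no further homotopy-theoretic argument.
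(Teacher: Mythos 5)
Your proposal is correct and follows the same route as the paper: Theorem~\ref{p:Fr_and_F} is obtained from the unnamed proposition preceding it (itself a specialization of Lemma~\ref{l:cup_Seg_A_and_cup_Seg'_A} via Lemma~\ref{l:key}) simply by translating into the abbreviated spectrum notation. The bookkeeping you spell out — that $A(\Psi)=\{\Psi,0_m\}$ for connected $\Psi$, so $\Gamma'_m(U,X_+\wedge T^n)_\Psi=\Gamma_m(U,X_+\wedge T^n)_\Psi$, and that $\cup_{\Phi\in I}A(\Phi)=\mathcal A$ — is exactly what the paper's ``Under this notation the preceding proposition implies the following'' leaves implicit.
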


We are now in a position to prove Theorem~\ref{ZM_fr_and_LM_fr}.

\begin{proof}[Proof of Theorem~\ref{ZM_fr_and_LM_fr}]
This follows from Theorem~\ref{p:Fr_and_F}. Indeed, consider the
composite morphism of $S^1$-spectra
   $$\ZF^{S^1}_m(U,X\times T^n)\to \ZFr^{S^1}_m(U,X\times T^n)\xrightarrow{\lambda_{X\times T^n}} \ZF^{S^1}_m(U,X\times T^n),$$
where the left arrow is induced by the
arrow~\eqref{eq:F_S1_and_Fr_S1}. Within
Definitions~\ref{F_m_U_Y/(Y-S)} and~\ref{def:Gamma_m_and_Gamma'_m},
Theorem \ref{p:Fr_and_F} implies the left arrow is a stable
equivalence of $S^1$-spectra. Note that the composite morphism is
the identity map. Thus the morphism $\lambda_{X\times T^n}$ is a
stable equivalence of $S^1$-spectra. This finishes the proof.
\end{proof}

\end{document}